\newtheorem{theorem}{Theorem}
\newtheorem{proposition}[theorem]{Proposition}
\newtheorem{lemma}[theorem]{Lemma}
\newtheorem{corollary}[theorem]{Corollary}
\theoremstyle{definition}
\newcommand{\cref}[1]{Corollary~\ref{c.#1}}
\numberwithin{equation}{section}
\numberwithin{theorem}{section}
\newcommand{\Z}{\mathbb{Z}}
\newcommand{\N}{\mathbb{N}}
\newcommand{\R}{\mathbb{R}}
\newcommand{\E}{\mathbb{E}}
\renewcommand{\P}{\mathbb{P}}
\newcommand{\F}{\mathcal{F}}
\newcommand{\Zd}{\mathbb{Z}^d}
\newcommand{\Rd}{\mathbb{R}^d}
\newcommand{\Sd}{\mathbb{S}^d}
\newcommand{\ep}{\varepsilon}
\newcommand{\ve}{\varepsilon}
\newcommand{\la}{\lambda}
\newcommand{\La}{\Lambda}
\newcommand{\vp}{\varphi}
\renewcommand{\L}{\mathcal{L}}
\renewcommand{\fint}{\strokedint}
\newcommand{\im}{m}
\newcommand{\V}{\mathbb{V}}
\newcommand{\err}{\mathcal{E}}
\DeclareMathOperator{\dist}{dist}
\DeclareMathOperator*{\osc}{osc}
\DeclareMathOperator{\var}{var}
\DeclareMathOperator{\tr}{tr}
\DeclareMathOperator{\divg}{div}
\newcommand{\X}{\mathcal{X}}  
\newcommand{\Y}{\mathcal{Y}}
\renewcommand{\tilde}{\widetilde}
\renewcommand{\div}{\divg}
\renewcommand{\hat}{\widehat}
\begin{document}

\title[Optimal quantitative estimates in stochastic homogenization]{Optimal quantitative estimates in stochastic homogenization for elliptic equations in nondivergence form}

\begin{abstract}
We prove quantitative estimates for the stochastic homogenization of linear uniformly elliptic equations in nondivergence form. Under strong independence assumptions on the coefficients, we obtain optimal estimates on the subquadratic growth of the correctors with stretched exponential-type bounds in probability. Like the theory of Gloria and Otto~\cite{GO1,GO2} for divergence form equations, the arguments rely on nonlinear concentration inequalities combined with certain estimates on the Green's functions and derivative bounds on the correctors. We obtain these analytic estimates by developing a~$C^{1,1}$ regularity theory down to microscopic scale, which is of independent interest and is inspired by the~$C^{0,1}$ theory introduced in the divergence form case by the first author and Smart~\cite{AS2}.
\end{abstract}

\author[S. Armstrong]{Scott Armstrong}
\address[S. Armstrong]{Universit\'e Paris-Dauphine, PSL Research University, CNRS, UMR [7534], CEREMADE, Paris, France}
\curraddr{Courant Institute of Mathematical Sciences, New York University, 251 Mercer St., New York 10012}
\email{scotta@cims.nyu.edu}

\author[J. Lin]{Jessica Lin}
\address[J. Lin]{Department of Mathematics, University of Wisconsin, Madison}
\email{jessica@math.wisc.edu}

\keywords{stochastic homogenization, correctors, error estimate}
\subjclass[2010]{35B27, 35B45, 35J15}
\date{\today}

\maketitle


\section{Introduction} 
\label{s.introduction}

\subsection{Motivation and informal summary of results}
We identify the \emph{optimal} error estimates for the stochastic homogenization of solutions $u^{\ep}$ solving:
\begin{equation}\label{e.maineq}
\begin{cases}
-\tr \left(A\left(\frac{x}{\ep}\right)D^{2}u^{\ep}\right)=0&\mbox{in}\quad U,\\
u^{\ep}(x)=g(x)&\mbox{on}\quad \partial U.
\end{cases}
\end{equation}
Here $U$ is a smooth bounded subset of $\Rd$ with $d\geq 2$, $D^2v$ is the Hessian of a function $v$ and $\tr(M)$ denotes the trace of a symmetric matrix $M\in \Sd$. The coefficient matrix~$A(\cdot)$ is assumed to be a stationary random field, with given law~$\P$, and valued in the subset of symmetric matrices with eigenvalues belonging to the interval~$[\lambda,\Lambda]$ for given ellipticity constants $0<\lambda \leq \Lambda$. The solutions $u^{\ep}$ are understood in the \emph{viscosity sense}~\cite{CIL} although in most of the paper the equations can be interpreted classically. We assume that the probability measure $\P$ has a product-type structure and in particular possesses a \emph{finite range of dependence} (see Section \ref{ss.assump} for the precise statement). According to the general qualitative theory of stochastic homogenization developed in~\cite{PV2,Y0} for nondivergence form elliptic equations (see also the later work~\cite{CSW}), the solutions $u^{\ve}$ of \eqref{e.maineq} converge uniformly as $\ep \to 0$, $\P$--almost surely, to that of the homogenized problem 
\begin{equation}
\begin{cases}
-\tr(\overline{A} D^{2}u)=0&\mbox{in}\quad U,\\
u(x)=g(x)&\mbox{on}\quad \partial U,
\end{cases}
\end{equation}
for some deterministic, uniformly elliptic matrix $\overline{A}$.  Our interest in this paper is to study the rate of convergence of $u^\ep$ to $u$. 

\smallskip

Error estimates quantifying the speed homogenization of $u^{\ve}\rightarrow u$ have been obtained in~\cite{Y0, Y2,CS, AS3}. The most recent paper~\cite{AS3} was the first to give a general result stating that the typical size of the error is at most algebraic, that is, $O(\ep^{\alpha})$ for some positive exponent $\alpha$. The earlier work~\cite{Y2} gave an algebraic error estimate in dimensions $d>4$. The main purpose of this paper is to reveal explicitly the optimal exponent. 

\smallskip

Our main quantitative estimates concern the size of certain stationary solutions called the \emph{approximate correctors}. These are defined, for a fixed symmetric matrix $M\in\mathbb{S}^{d}$ and $\ep>0$, as the unique solution $\phi_\ep \in C(\Rd) \cap L^\infty(\Rd)$ of the equation
\begin{equation}\label{e.approxcorrector}
\ep^{2}\phi_{\ep}-\tr\left(A(x)(M+D^{2}\phi_{\ep})\right)=0\quad\mbox{in} \ \Rd.
\end{equation}
Our main result states roughly that, for every $x\in\Rd$, $\ep \in (0,\tfrac12]$ and $t >0$,
\begin{equation} \label{e.opt}
\P \Big[ \big| \ep^2\phi_\ep(x) - \tr\left(\overline{A}M \right) \big| \geq t \err(\ep) \Big] \lesssim \exp \left( -t^{\frac12} \right),
\end{equation}
where the typical size $\err(\ep)$ of the error depends only on the dimension~$d$ in the following way:
\begin{equation} \label{e.error}
\err(\ep):= \left\{ \begin{aligned}
& \ep \left| \log \ep \right| && \mbox{in} \ d = 2, \\
& \ep^{\frac32} && \mbox{in} \ d=3,\\ 
& \ep^2 \left| \log \ep \right|^{\frac12} && \mbox{in} \ d=4,\\ 
& \ep^2  & & \mbox{in} \ d > 4.
\end{aligned} \right.
\end{equation}
Note that the rescaling $\phi^\ep(x):= \ep^2 \phi_\ep\left( \tfrac x\ep \right)$ allows us to write~\eqref{e.approxcorrector} in the so-called theatrical scaling as
\begin{equation*}
\phi^{\ep}-\tr\left(A\left( \tfrac x\ep\right) \left(M+D^{2}\phi^{\ep}\right)\right)=0
\quad\mbox{in} \ \Rd.
\end{equation*}
This is a well-posed problem (it has a unique bounded solution) on $\Rd$ which homogenizes to the equation
\begin{equation*}
\phi -\tr\left(\overline{A} \left(M+D^{2}\phi\right)\right)=0
\quad\mbox{in} \ \Rd.
\end{equation*}
The solution of the latter is obviously the constant function $\phi\equiv \tr\left( \overline{A}M \right)$ and so the limit 
\begin{equation}
\label{e.quallimit}
\ep^2\phi_\ep(x) = \phi^\ep(\ep x) \rightarrow  \tr\left( \overline{A}M \right)
\end{equation}
is a qualitative homogenization statement. Therefore, the estimate~\eqref{e.opt} is a quantitative homogenization result for this particular problem which asserts that the speed of homogenization is $O(\err(\ep))$. 

\smallskip

Moreover, it is well-known that estimating the speed of homogenization for the Dirichlet problem is essentially equivalent to obtaining estimates on the approximate correctors (see~\cite{Evans,AL2,CS, AS3}). Indeed, the estimate~\eqref{e.opt} can be transferred without any loss of exponent to an estimate on the speed of homogenization of the Dirichlet problem. One can see this from the standard two-scale ansatz
\begin{equation*}
u^\ep (x) \approx u(x) + \ep^2\phi_\ep\left( \tfrac x\ep \right),
\end{equation*}
which is easy to formalize and quantify in the linear setting since the homogenized solution~$u$ is completely smooth. We remark that since~\eqref{e.opt} is an estimate at a single point $x$, an estimate in $L^\infty$ for the Dirichlet problem will necessarily have an additional logarithmic factor $| \log \ep|^q$ for some $q(d)<\infty$. Since the argument is completely deterministic and essentially the same as in the case of periodic coefficients, we do not give the details here and instead focus on the proof of~\eqref{e.opt}.

\smallskip

The estimate~\eqref{e.opt} can also be expressed in terms of an estimate on the subquadratic growth of the correctors $\phi$, which are the solutions, for given $M\in\Sd$, of the problem
\begin{equation}\label{e.corrector}
\left\{ 
\begin{aligned}
& -\tr\left(A(x)(M+D^{2}\phi \right)= -\tr(\overline{A}M) & \mbox{in} & \ \Rd, \\
& \limsup_{R\to \infty} \, R^{-2} \osc_{B_R} \phi=0.\\
\end{aligned}
\right.
\end{equation}
Recall that while $D^2\phi$ exists as a stationary random field, $\phi$ itself may not be stationary. The estimate~\eqref{e.opt} implies that, for every $x\in\Rd$, 
\begin{equation*}
\P \left[ \left| \phi(x) - \phi(0) \right| \geq t |x|^2 \err\left(|x|^{-1}\right) \right] \lesssim \exp\left( -t^{\frac12} \right).
\end{equation*}
Notice that in dimensions $d>4$, this implies that the typical size of $|\phi(x) - \phi(0)|$ stays bounded as $|x| \to \infty$, suggesting that $\phi$ is a locally bounded, stationary random field. In Section~\ref{s.correctors}, we prove that this is so: the correctors are locally bounded and stationary in dimensions~$d>4$.

\smallskip

The above estimates are optimal in the size of the scaling, that is, the function $\err(\ep)$ cannot be improved in any dimension. This can be observed by considering the simple operator $-a(x)\Delta$ where $a(x)$ is a scalar field with a random checkerboard structure. Fix a smooth (deterministic) function $f\in C^\infty_c(\Rd)$ and consider the problem
\begin{equation}
\label{e.easyexample}
-a\left(\tfrac x\ep \right) \Delta u^\ep = f(x) \quad \mbox{in} \ \Rd.
\end{equation}
We expect this to homogenize to a problem of the form
\begin{equation*}
-\overline a \Delta u = f(x) \quad \mbox{in} \ \Rd.
\end{equation*}
In dimension $d=2$ (or in higher dimensions, if we wish) we can also consider the Dirichlet problem with zero boundary conditions in a ball much larger than the support of $f$. We can then move the randomness to the right side of the equation to have
\begin{equation*}
-\Delta u^\ep = a^{-1}\left(\tfrac x\ep \right) f(x) \quad \mbox{in} \ \Rd
\end{equation*}
and then write a formula for the value of the solution $u^\ep$ at the origin as a convolution of the random right side against the Green's kernel for the Laplacian operator. The size of the fluctuations of this convolution is easy to determine, since it is essentially a sum (actually a convolution) of i.i.d.~random variables, and it turns out to be precisely of order $\err(\ep)$ (with a prefactor constant depending on the variance of $a(\cdot)$ itself). For instance, we show roughly that
\begin{equation*}
\var\left[ u^\ep(0) \right] \simeq \err(\ep)^2.
\end{equation*}
This computation also serves as a motivation for our proof of~\eqref{e.opt}, although handling the general case of a random diffusion matrix is of course much more difficult than that of~\eqref{e.easyexample}, in which the randomness is scalar and can be split from the operator.

\subsection{Method of proof and comparison to previous works}

The arguments in this paper are inspired by the methods introduced in the divergence form setting by Gloria and Otto~\cite{GO1,GO2} and Gloria, Neukamm and Otto~\cite{GNO1} (see also Mourrat~\cite{M}). The authors combined certain concentration inequalities and analytic arguments to prove optimal quantitative estimates in stochastic homogenization for linear elliptic equations of the form
\begin{equation*}
-\nabla \cdot \left( A(x) \nabla u \right) = 0.
\end{equation*}
The concentration inequalities provided a convenient mechanism for transferring quantitative ergodic information from the coefficient field to the solutions themselves, an idea which goes back to an unpublished paper of Naddaf and Spencer~\cite{NS}. Most of these works rely on some version of the Efron-Stein inequality~\cite{ES} or the logarithmic Sobolev inequality to control the fluctuations of the solution by estimates on the spatial derivatives of the Green's functions and the solution.

\smallskip

A variant of these concentration inequalities plays an analogous role in this paper (see Proposition~\ref{p.concentration}). There are then two main analytic ingredients we need to conclude: first, an estimate on the decay of the Green's function for the heterogenous operator (note that, in contrast to the divergence form case, there is no useful deterministic bound on the decay of the Green's function); and (ii) a higher-order regularity theory asserting that, with high $\P$-probability, solutions of our random equation are more regular than the deterministic regularity theory would predict. We prove each of these estimates by using the sub-optimal (but algebraic) quantitative homogenization result of~\cite{AS3}: we show that, since solutions are close to those of the homogenized equation on large scales, we may ``borrow" the estimates from the constant-coefficient equation. This is an idea that was introduced in the context of stochastic homogenization for divergence form equations by the first author and Smart~\cite{AS2} (see also~\cite{GNO2,AM}) and goes back to work of Avellaneda and Lin~\cite{AL1,AL2} in the case of periodic coefficients. 

\smallskip

We remark that, while the scaling of the error $\err(\ep)$ is optimal, the estimate~\eqref{e.opt} is almost certainly sub-optimal in terms of stochastic integrability. This seems to be one limitation of an approach relying on (nonlinear) concentration inequalities, which so far yield only estimates with exponential moments~\cite{GNO2} rather than Gaussian moments~\cite{AKM1,AKM2,GO4}. Recently, new approaches based on renormalization-type arguments (rather than nonlinear concentration inequalities) have been introduced in the divergence form setting~\cite{AKM1,AKM2,GO4}. It was shown in~\cite{AKM2} that this approach yields estimates at the critical scale which are also optimal in stochastic integrability. It would be very interesting to see whether such arguments could be developed in the nondivergence form case.

\subsection{Assumptions}
\label{ss.assump}
In this subsection, we introduce some notation and present the hypotheses. Throughout the paper, we fix ellipticity constants $0< \lambda\leq \Lambda$ and the dimension $d \geq 2$. The set of $d$-by-$d$ matrices is denoted by $\mathbb{M}^{d}$, the set of $d$-by-$d$ symmetric matrices is denoted by $\mathbb{S}^{d}$, and $Id \in \mathbb{S}^d$ is the identity matrix. If $M,N \in \mathbb{S}^d$, we write $M \leq N$ if every eigenvalue of $N-M$ is nonnegative; $|M|$ denotes the largest eigenvalue of $M$. 

\subsubsection{Definition of the probability space $(\Omega,\F)$}
We begin by giving the structural hypotheses on the equation. 
We consider matrices $A\in \mathbb{M}^{d}$
which are uniformly elliptic:
\begin{equation} \label{e.Fellip}
\lambda Id\leq A(\cdot) \leq \Lambda Id
\end{equation}
and H\"older continuous in $x$: 
\begin{equation} \label{e.Fcont}
\exists\sigma \in (0,1] \quad \mbox{such that} \quad \sup_{x,y\in\Rd} \frac{\left| A(x)-A(y) \right|}{|x-y|^\sigma} < \infty,
\end{equation}
We define the set 
\begin{equation} \label{e.Omega}
\Omega:= \left\{ A \, :\,  \mbox{$A$ satisfies~\eqref{e.Fellip} and~\eqref{e.Fcont}} \right\}.
\end{equation}
Notice that the assumption~\eqref{e.Fcont} is a qualitative one. We purposefully do not specify any quantitative information regarding the size of the supremum in~\eqref{e.Fcont}, because none of our estimates depend on this value. We make this assumption in order to ensure that a comparison principle holds for our equations. 

\smallskip

We next introduce some $\sigma$--algebras on $\Omega$. For every Borel subset $U\subseteq \Rd$ we define $\F(U)$ to be the $\sigma$--algebra on $\Omega$ generated by the behavior of $A$ in $U$, that is,
\begin{multline} \label{e.FU}
\F(U):= \mbox{$\sigma$--algebra on $\Omega$ generated by the family of random variables} \\ \left\{ A\mapsto A(x) \,:\, \ x\in U \right\}.
\end{multline}
We define $\F := \F(\Rd)$.

\subsubsection{Translation action on $(\Omega,\F)$}

The translation group action on $\Rd$ naturally pushes forward to $\Omega$. We denote this action by $\{ T_y \}_{y\in \Rd}$, with $T_y:\Omega \to \Omega$ given by $$(T_yA)(x): =A(x+y).$$
The map $T_y$ is clearly $\F$--measurable, and is extended to~$\F$ by setting, for each $E \in \F$,~$$T_yE:= \left\{ T_yA \,:\, A\in E\right\}.$$ We also extend the translation action to $\F$--measurable random elements $X:\Omega\to S$ on $\Omega$, with $S$ an arbitrary set, by defining $(T_yX)(F):=X(T_yF)$.

\smallskip

We say that a random field $f:\Zd \times \Omega \to S$ is \emph{$\Zd$--stationary} provided that $f(y+z,A) = f(y,T_zA)$ for every $y,z\in\Zd$ and $A\in \Omega$. Note that an $\F$--measurable random element $X:\Omega\to S$ may be viewed as a $\Zd$--stationary random field via the identification with $\widetilde X(z,A):= X(T_zA)$.

\subsubsection{Assumptions on the random environment}
Throughout the paper, we fix $\ell \geq 2\sqrt{d}$ and a probability measure $\P$ on $(\Omega,\F)$ which satisfies the following:
\begin{itemize}

\item[(P1)] $\P$ has $\Zd$--stationary statistics: that is, for every $z\in\Zd$ and $E\in \F$,
\begin{equation*}
 \P \left[ E \right] = \P \left[ T_z E\right]. 
\end{equation*}

\item[(P2)] $\P$ has a range of dependence at most $\ell$: that is, for all Borel subsets $U,V$ of $\Rd$ such that $\dist(U,V) \geq \ell$, 
\begin{equation*} 
\mbox{ $\F(U)$ and $\F(V)$ \ are \ $\P$--independent.}
\end{equation*}
\end{itemize}

Some of our main results rely on concentration inequalities (stated in Section~\ref{ss.concentration}) which require a stronger independence assumption than finite range of dependence (P2) which was used in \cite{AS3}. Namely, we require that~$\P$ is the pushforward of another probability measure which has a product space structure. We consider a probability space $(\Omega_0,\F_0,\P_0)$ and denote
\begin{equation} \label{e.starform}
(\Omega_*,\F_*,\P_*) := (\Omega_0^{\Zd},\F_0^{\Zd},\P_0^{\Zd}).
\end{equation}
We regard an element $\omega\in \Omega_*$ as a map $\omega:\Zd \to \Omega_0$. For each $\Gamma \subseteq\Zd$, we denote by $\F_*(\Gamma)$ the $\sigma$-algebra generated by the family $\{ \omega\mapsto \omega(z) \,:\, z\in \Gamma\}$ of maps from $\Omega_*$ to $\Omega_0$. We denote the expectation with respect to $\P_*$ by $\E_*$. Abusing notation slightly, we also denote the natural $\Zd$-translation action on $\Omega_*$ by $T_z$, that is, $T_z:\Omega_*\to \Omega_*$ is defined by $(T_z\omega)(y):=\omega(y+z)$.

\smallskip

We assume that there exists an $(\F_*,\F)$--measurable map $\pi:\Omega_* \to \Omega$ which satisfies the following:
\begin{equation} \label{e.pullback}
\P \left[ E \right] = \P_* \!\left[ \pi^{-1} (E)\right] \quad \mbox{for every $E \in \F$,}  
\end{equation}
\begin{equation} \label{e.transcommute}
\pi \circ T_z = T_z \circ \pi \quad \mbox{for every} \ z\in \Zd,
\end{equation}
with the translation operator interpreted on each side in the obvious way, and
\begin{multline} \label{e.siginclusion}
\mbox{for every Borel subset $U \subseteq \Rd$ and $E \in \F(U)$,}     \\ \pi^{-1}(E) \in \F_{*}\left( \left\{ z\in \Zd \,:\, \dist(z,U) \leq \ell/2 \right\} \right).
\end{multline}
We summarize the above conditions as:
\begin{itemize}
\item[(P3)] There exists a probability space $(\Omega_*,\F_*,\P_*)$ of the form~\eqref{e.starform} and a map $$\pi:=\Omega_* \to \Omega,$$ which is $(\F,\F_*)$--measurable and satisfies~\eqref{e.pullback},~\eqref{e.transcommute} and~\eqref{e.siginclusion}.

\end{itemize}

Note that, in view of the product structure, the conditions~\eqref{e.pullback} and~\eqref{e.siginclusion} imply~(P2) and~\eqref{e.pullback} and~\eqref{e.transcommute} imply~(P1). Thus~(P1) and~(P2) are superseded by~(P3).

\subsection{Statement of main result}

We next present the main result concerning quantitative estimates of the approximate correctors. 

\begin{theorem}
\label{t.correctors}
Assume that $\P$ is a probability measure on $(\Omega,\F)$ satisfying~(P3). Let $\mathcal{E}(\ep)$ be defined by~\eqref{e.error}. Then there exist $\delta(d,\lambda,\Lambda)>0$ and $C(d,\lambda,\Lambda,\ell) \geq 1$ such that, for every $\ep \in (0,\tfrac12]$, 
\begin{equation} \label{e.correctorerror}
\E \left[  \exp \left(\left( \frac1{\err(\ep)} \sup_{x\in B_{\sqrt{d}}(0)}\left| \ep^2 \phi_\ep(x) - \tr\left( \overline{A}M \right) \right| \right)^{\frac{1}{2} + \delta } \,\right) \right] \leq C.
\end{equation}
\end{theorem}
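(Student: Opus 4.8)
The plan is to prove \eqref{e.correctorerror} by combining three ingredients in the spirit of the Gloria--Otto program: (i) a nonlinear concentration inequality (Proposition~\ref{p.concentration}), which bounds the stretched-exponential moment of the fluctuations of $\ep^2\phi_\ep(x)$ by a suitable moment of the ``vertical derivative'' or sensitivity of $\phi_\ep$ with respect to a single block $\omega(z)$ of the product structure in (P3); (ii) a representation of this sensitivity in terms of the Green's function $G_\ep$ of the operator $\ep^2 - \tr(A(x)D^2\,\cdot\,)$ together with the gradient and Hessian of $\phi_\ep$ on the corresponding unit-scale block; and (iii) the two analytic inputs advertised in the introduction, namely an $L^2$-type decay estimate for $G_\ep$ (borrowed from the constant-coefficient Green's function using the algebraic homogenization rate of~\cite{AS3}) and the $C^{1,1}$ regularity theory down to microscopic scale, which yields deterministic-in-probability bounds on $\fint_{B_1(z)} |D\phi_\ep|^2$ and $\fint_{B_1(z)} |D^2\phi_\ep|^2$ with stretched-exponential tails. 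Multiplying these together and summing the resulting series $\sum_{z\in\Zd}$ of squared sensitivities will produce exactly $\err(\ep)^2$ in each dimension, the dimension-dependence coming entirely from whether $\sum_z |G_\ep(0,z)|^2$ (or its $x$-derivative analogue) converges, diverges logarithmically, etc.

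Concretely I would proceed as follows. First, reduce the $\sup_{x\in B_{\sqrt d}}$ to a pointwise estimate: since $\phi_\ep$ solves a uniformly elliptic equation, interior $C^{1,1}$ estimates (again from the microscopic-scale regularity theory) control the oscillation of $\ep^2\phi_\ep$ on $B_{\sqrt d}(0)$ by its value at a point plus a term of lower order, so it suffices to estimate $\ep^2\phi_\ep(x_0)-\tr(\overline A M)$ for a single $x_0$, uniformly in $x_0$. Second, set up the martingale/block decomposition: write $\ep^2\phi_\ep(x_0)-\E[\ep^2\phi_\ep(x_0)]$ using the product structure, apply Proposition~\ref{p.concentration} to bound its $\exp((\cdot/\err(\ep))^{1/2+\delta})$-moment by (a power of) $\big(\sum_{z\in\Zd}\osc_z(\ep^2\phi_\ep(x_0))^2\big)^{1/2}$, where $\osc_z$ denotes the oscillation in the $z$-th coordinate. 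Third, estimate $\osc_z(\ep^2\phi_\ep(x_0))$: by a standard comparison/perturbation argument, changing $\omega(z)$ only changes $A$ on $B_{\ell}(z)$, and the resulting change in $\phi_\ep(x_0)$ is controlled, via Green's function duality for the equation \eqref{e.approxcorrector}, by $\int_{B_\ell(z)} |G_\ep(x_0,y)|\,|M+D^2\phi_\ep(y)|\,dy \lesssim \big(\fint_{B_\ell(z)}|G_\ep(x_0,\cdot)|^2\big)^{1/2}\big(1+\fint_{B_\ell(z)}|D^2\phi_\ep|^2\big)^{1/2}$. Fourth, insert the Green's function decay bound $\fint_{B_1(z)}|G_\ep(x_0,\cdot)|^2 \lesssim \min(1,|x_0-z|^{2-d})^2 e^{-c\ep|x_0-z|}$ (the exponential coming from the massive term $\ep^2$) and the regularity bound $\fint_{B_1(z)}|D^2\phi_\ep|^2 \le \mathcal{O}_{\Psi}(C)$ with $\Psi$ a stretched-exponential, then sum in $z$: the deterministic geometric sum $\sum_z \min(1,|z|^{2-d})^2 e^{-c\ep|z|}$ is $\simeq \err(\ep)^2$ — this is precisely the computation flagged in the introduction via the checkerboard example \eqref{e.easyexample}. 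Finally, handle the nonrandom part by showing $|\E[\ep^2\phi_\ep(x_0)]-\tr(\overline A M)|\lesssim \err(\ep)$ (indeed this follows from \eqref{e.opt}-type mean-zero considerations, or directly from the algebraic rate of~\cite{AS3} together with the fact that the systematic error is at most of order $\err(\ep)$), and combine with the fluctuation bound, absorbing the product of a bounded random variable (the regularity constant) and a deterministic quantity using a Hölder/Young inequality for exponential moments, which forces the exponent $\tfrac12$ rather than $\tfrac12+\delta$ in the final statement but is compatible with it.

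The main obstacle I expect is step three combined with the regularity input: in nondivergence form there is no energy structure, so bounding $\osc_z(\phi_\ep(x_0))$ requires a genuine pointwise Green's function argument for \eqref{e.approxcorrector}, and controlling $\fint_{B_1(z)}|D^2\phi_\ep|^2$ with good (stretched-exponential) probability is exactly where the $C^{1,1}$ theory ``down to unit scale'' is essential — one needs that, on the event where the algebraic homogenization rate of~\cite{AS3} holds at all dyadic scales above $1$, the solution inherits $C^{1,1}$ estimates from $\overline A$-harmonic functions, and this event has stretched-exponential probability. A secondary difficulty is making the concentration inequality interact cleanly with an unbounded (but well-concentrated) sensitivity: since $\osc_z(\phi_\ep(x_0))$ itself carries a random factor $(\fint_{B_1(z)}|D^2\phi_\ep|^2)^{1/2}$, one cannot feed a deterministic bound into Proposition~\ref{p.concentration} directly; the fix is to run the concentration estimate on the good event and control the complement crudely, at the (acceptable) cost of the slightly suboptimal stochastic integrability $\exp(t^{1/2})$ rather than Gaussian. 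The Green's function decay estimate and the $|\log\ep|$ refinements in $d=2,4$ are then routine bookkeeping in the summation over $z$.
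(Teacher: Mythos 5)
Your high-level strategy matches the paper's: split $\ep^2\phi_\ep(0)-\tr(\overline A M)$ into a fluctuation part and a systematic part, control the fluctuation via the stretched-exponential Efron--Stein inequality applied to the vertical derivatives, bound the vertical derivatives by a Green's-function duality plus $C^{1,1}$ regularity, and observe that $\sum_z G_\ep(0,z)^2 \simeq \ep^{-4}\err(\ep)^2$. The paper's proof (Propositions~\ref{p.randomerror}--\ref{p.deterministicerror} in Section~\ref{s.optimals}, resting on Proposition~\ref{p.sensitivity} and Lemma~\ref{l.coarseABP}) follows exactly this skeleton, so you have the right architecture. However, there are two genuine gaps in your version.

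First, your bound on the vertical derivative $\osc_z(\phi_\ep(x_0))$ uses $\big(\fint_{B_\ell(z)}|D^2\phi_\ep|^2\big)^{1/2}$, and you claim this is supplied by the $C^{1,1}$ theory with stretched-exponential tails. It is not. Theorem~\ref{t.regularity} controls only the \emph{coarsened} seminorm $[\phi_\ep]_{C^{1,1}_1(z,B_{1/\ep}(z))}$, i.e. approximability by affines at scales $\geq 1$; this gives no estimate on the actual Lebesgue integral $\fint_{B_1(z)}|D^2\phi_\ep|^2$. Passing from the coarsened seminorm to a pointwise or $L^2$ Hessian bound requires a Schauder or $W^{2,p}$ estimate whose constant necessarily depends on the H\"older modulus $[A]_{C^{0,\sigma}}$, a quantity the paper deliberately does not quantify (see the remark after~\eqref{e.Fcont}). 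This is precisely the ``glitch'' flagged in Section~\ref{s.sensitivity}: the formal computation leading to~\eqref{e.rescomp} needs $[\phi_\ep]_{C^{1,1}(B_{\ell/2}(z))}$, which is not controlled, and Lemma~\ref{l.coarseABP} is the replacement, a barrier/touching argument that feeds the \emph{coarsened} seminorm $[\phi_\ep]_{C^{1,1}_1}$ directly into the comparison with $G_\ep'$. Your $L^2$-Cauchy--Schwarz version of the duality step would either need an extra constant depending on $[A]_{C^{0,\sigma}}$, or a lemma in the spirit of Lemma~\ref{l.coarseABP}; without one of these it does not close.

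Second, your treatment of the deterministic error $|\E[\ep^2\phi_\ep(x_0)]-\tr(\overline A M)|$ is too optimistic. You assert it follows ``directly from the algebraic rate of~\cite{AS3}''; that rate alone gives only $O(\ep^\alpha)$ for some small $\alpha>0$, which is far weaker than $O(\err(\ep))$. The paper's Proposition~\ref{p.deterministicerror} shows that the systematic error is controlled \emph{by} the fluctuation estimate through a bootstrap: one truncates the fluctuations (producing $\widehat f_\ep$ and $\widehat\phi_\ep$), applies ABP to the truncated problem on $B_R$, invokes the $O(R^{-\alpha})$ comparison from Proposition~\ref{p.subopt} for the homogenized problem with a \emph{constant} right-hand side, and sends $R\to\infty$. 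The logical dependence---that the deterministic bound is derived from the already-established random bound---is the key idea there, and neither of your suggested routes (``mean-zero considerations'' or ``directly from AS3'') supplies it. Everything else in your sketch (the reduction of the $\sup$ over $B_{\sqrt d}$ via the lattice points and the $C^{1,1}$ bound as in~\eqref{e.offgrid}, the geometric sum $\sum_z \xi_\ep(z)^2 \simeq \ep^{-4}\err(\ep)^2$, the Jensen-inequality mechanism for feeding the random prefactors $(T_z\X)$ into the stretched-exponential moment) is correct and aligned with the paper.
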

The proof of Theorem~\ref{t.correctors} is completed in Section~\ref{s.optimals}.

\subsection{Outline of the paper} 
The rest of the paper is organized as follows. In Section~\ref{s.prelims}, we introduce the approximate correctors and the modified Green's functions and give some preliminary results which are needed for our main arguments. In Section~\ref{s.reg}, we establish a~$C^{1,1}$ regularity theory down to unit scale for solutions. Section~\ref{s.green} contains estimates on the modified Green's functions, which roughly state that the these functions have the same rate of decay at infinity as the Green's function for the homogenized equation (i.e, the Laplacian). In this section, we also mention estimates on the invariant measure associated to the linear operator in~\eqref{e.maineq}. In Section~\ref{s.sensitivity}, we use results from the previous sections to measure the ``sensitivity'' of solutions of the approximate corrector equation with respect to the coefficients. Finally, in Section~\ref{s.optimals}, we obtain the optimal rates of decay for the approximate corrector, proving our main result, and, in Section~\ref{s.correctors}, demonstrate the existence of stationary correctors in dimensions~$d>4$. In the appendix, we give a proof of the concentration inequality we use in our analysis, which is a stretched exponential version of the Efron-Stein inequality.

\section{Preliminaries}
\label{s.prelims}

\subsection{Approximate correctors and modified Green's functions}

For each given~$M\in\mathbb{S}^{d}$ and $\ep>0$, the approximate corrector equation is
\begin{equation*}
\ep^{2}\phi_{\ep}-\tr(A(x)(M+D^{2}\phi_{\ep}))=0\quad\mbox{in} \ \Rd.
\end{equation*}
The existence of a unique solution $\phi_\ep$ belonging to $C(\Rd)\cap L^\infty(\Rd)$ is obtained from the usual Perron method and the comparison principle. 

\smallskip

We also introduce, for each $\ep\in (0,1]$ and $y\in\Rd$, the ``modified Green's function" $G_{\ep}(\cdot, y; A)=G_\ep(\cdot,y)$, which is the unique solution of the equation
\begin{equation} \label{e.GF}
\ep^2 G_\ep -\tr\left(A (x) D^2G_\ep \right) = \chi_{B_\ell(y)} \quad \mbox{in} \ \Rd.
\end{equation}
Compared to the usual Green's function, we have smeared out the singularity and added the zeroth-order ``massive" term. 

To see that~\eqref{e.GF} is well-posed, we build the solution $G_{\ep}$ by compactly supported approximations. We first solve the equation in the ball $B_R$ (for $R>\ell$) with zero Dirichlet boundary data to obtain a function $G_{\ep,R}(\cdot,y)$. By the maximum principle, $G_{\ep,R} (\cdot,y)$ is increasing in $R$ and we may therefore define its limit as $R\to \infty$ to be $G_{\ep}(\cdot,y)$. We show in the following lemma that it is bounded and decays at infinity, and from this it is immediate that it satisfies~\eqref{e.GF} in~$\Rd$. The lemma is a simple deterministic estimate which is useful only in the regime $|x-y| \gtrsim \ep^{-1} \left| \log \ep \right|$ and follows from the fact (as demonstrated by a simple test function) that the interaction between the terms on the left of~\eqref{e.GF} give the equation a characteristic length scale of~$\ep^{-1}$. 

\begin{lemma}
\label{l.Gtails}
There exist $a(d,\lambda,\Lambda)>0$ and $C(d,\lambda,\Lambda)>1$ such that, for every $A\in \Omega$, $x,y\in \Rd$ and $\ep\in(0,1]$,
\begin{equation} \label{e.Gtails}
G_\ep(x,y) \leq C \ep^{-2} \exp\left( -\ep a |x-y|\right).
\end{equation}
\end{lemma}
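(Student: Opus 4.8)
The plan is to construct an explicit deterministic supersolution to \eqref{e.GF} of the form $w(x) = C\ep^{-2}\exp(-\ep a|x-y|)$ away from the smeared singularity, and to use the comparison principle. By translation we may assume $y=0$. Set $r=|x|$ and $\psi(r) = \exp(-\ep a r)$, so that $w(x) = C\ep^{-2}\psi(|x|)$. Since $\psi$ is radial and smooth for $r>0$, one computes
\begin{equation*}
D^2 w = C\ep^{-2}\left( \psi''(r)\, \frac{x\otimes x}{r^2} + \frac{\psi'(r)}{r}\left(Id - \frac{x\otimes x}{r^2}\right)\right),
\end{equation*}
with $\psi'(r) = -\ep a\,\psi(r)$ and $\psi''(r) = \ep^2 a^2 \psi(r)$. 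The eigenvalues of $D^2 w$ are therefore $C\ep^{-2}\ep^2 a^2\psi$ (in the radial direction, with multiplicity one) and $C\ep^{-2}(-\ep a/r)\psi$ (tangentially, with multiplicity $d-1$). Since $A(x)$ has eigenvalues in $[\lambda,\Lambda]$, the tensor $\tr(A(x)D^2 w)$ lies between the sum of the smallest and largest products; a safe bound valid for $r \geq 1$ (say) is
\begin{equation*}
\tr\left(A(x)D^2 w\right) \leq C\psi\left( \Lambda a^2 - (d-1)\lambda \frac{\ep a}{r}\right) \cdot \ep^{-2}\cdot \ep^2 \leq C\Lambda a^2 \psi,
\end{equation*}
the tangential terms only helping us since they are negative. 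Meanwhile $\ep^2 w = C\psi$. Thus
\begin{equation*}
\ep^2 w - \tr\left(A(x) D^2 w\right) \geq C\psi\left( 1 - \Lambda a^2\right) \geq 0 \quad\text{provided } a^2 \leq \tfrac{1}{2\Lambda},
\end{equation*}
so $w$ is a supersolution of the PDE in $\{|x| > 1\}$, and in fact of the full equation \eqref{e.GF} there since the right side $\chi_{B_\ell(0)}$ vanishes outside $B_\ell$ — wait, this forces me to work outside $B_\ell$ rather than outside $B_1$, which is fine: take $a^2 \leq 1/(2\Lambda)$ and $r \geq \ell \geq 2\sqrt d$, and the same computation goes through since the tangential contribution $-(d-1)\lambda \ep a/r$ is still $\leq 0$.

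It remains to handle the inner region $\{|x| \leq \ell\}$ and to fix the constant $C$. On $\partial B_\ell$ we have $\psi(|x|) = e^{-\ep a\ell} \geq e^{-a\ell} =: c_0 > 0$ (using $\ep \leq 1$), so $w \geq Cc_0\ep^{-2}$ there. On the other hand, by the maximum principle applied to \eqref{e.GF} itself — comparing against the constant supersolution $\ep^{-2}$ (since $\ep^2 \cdot \ep^{-2} - 0 = 1 = \|\chi_{B_\ell}\|_\infty$) — we get the a priori bound $0 \leq G_\ep \leq \ep^{-2}$ everywhere, and in particular on $\partial B_\ell$. Hence choosing $C = C(d,\Lambda) := c_0^{-1} = e^{a\ell}$ ensures $w \geq G_\ep$ on $\partial B_\ell$. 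Since $w$ is a supersolution of \eqref{e.GF} on the exterior domain $\Rd \setminus \overline{B_\ell}$, is nonnegative, decays at infinity (as does $G_\ep$, by the construction via $G_{\ep,R}$), and dominates $G_\ep$ on $\partial B_\ell$, the comparison principle on $\Rd\setminus\overline{B_\ell}$ yields $G_\ep(x,0) \leq w(x)$ for all $|x| \geq \ell$. For $|x| < \ell$ the desired inequality \eqref{e.Gtails} follows trivially from $G_\ep \leq \ep^{-2}$ after enlarging $C$ by the factor $e^{a\ell}$, which is bounded by a constant depending only on $d,\lambda,\Lambda$ since $\ell \geq 2\sqrt d$ is fixed — here one should note that although $C$ a priori seems to depend on $\ell$, the statement allows $C=C(d,\lambda,\Lambda)$ only because we may also absorb $e^{a\ell}$ by instead shrinking $a$; alternatively one simply restricts attention to $|x-y|\geq \ell$ where the estimate is meant to be used, and the $|x-y|<\ell$ case is vacuous up to constants.

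The only genuine subtlety — and the step I would be most careful about — is the viscosity-solution bookkeeping: since $G_\ep$ is only known a priori to be a continuous viscosity solution, the comparison principle invoked must be the viscosity comparison principle for the operator $u \mapsto \ep^2 u - \tr(A(x)D^2 u)$ on the unbounded domain $\Rd\setminus\overline{B_\ell}$, which is legitimate here because the zeroth-order term has the right sign (proper ellipticity) and because $w$ is a strict classical supersolution that decays at infinity, so one can localize to large balls and pass to the limit, exactly as in the construction of $G_\ep$ itself. The Hölder continuity assumption \eqref{e.Fcont} is what makes this comparison principle available, as noted in the text. Everything else is the elementary radial Hessian computation above, which I would present in one or two lines.
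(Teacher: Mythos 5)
Your proof is correct and uses essentially the same barrier as the paper: a radial exponential $C e^{-\ep a |x|}$ with $a^2 \lesssim 1/\Lambda$ and $C = e^{a\ell}$, with the same radial Hessian computation. The paper packages the comparison step more cleanly: it sets $\tilde\phi := \ep^{-2}\min\{1,\phi\}$, which (being the pointwise minimum of two viscosity supersolutions, and equal to the constant $\ep^{-2}$ on $B_\ell$) is a \emph{global} viscosity supersolution of $\ep^2 u - \tr(A D^2 u) \geq \chi_{B_\ell}$ in all of $\Rd$, and then compares $\tilde\phi$ with $G_{\ep,R}$ on the bounded ball $B_R$ before sending $R \to \infty$ — this sidesteps the exterior-domain comparison you flag as the delicate point, since everything happens on bounded domains where $G_{\ep,R}$ vanishes on the boundary. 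Your aside about trading $e^{a\ell}$ against $a$ is unnecessary (and would not actually remove the $\ell$-dependence, only relocate it); the paper's own proof also produces $C = e^{a\ell}$, so the lemma's stated dependence $C(d,\lambda,\Lambda)$ simply treats the fixed parameter $\ell$ as absorbed into the constants.
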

\begin{proof}
Without loss of generality, let $y=0$. Let $\phi(x):=  C\exp\left( -\ep a |x|   \right)$ for $C,a>0$ to be selected below. Compute, for every $x\neq 0$, 
\begin{equation*} \label{}
D^2\phi(x) = \phi(x) \left(  -\ep a \frac{1}{|x|} \left( I - \frac{x\otimes x}{|x|^2}\right) + \ep^2 a^2 \frac{x\otimes x}{|x|^2} \right).
\end{equation*}
Thus for any $A\in\Omega$,
\begin{equation*} \label{}
 - \tr\left( A(x) D^2 \phi(x) \right) \geq \phi(x) \left( \frac{1}{|x|}\lambda\ep  a (d-1) - \Lambda(\ep^2 a^2) \right) \quad \mbox{in} \ \Rd\setminus \{ 0 \}. 
\end{equation*}
Set $a:=\frac{1}{\sqrt{2\La}}$. Then 
\begin{equation} \label{e.detsupersol}
\ep^{2}\phi - \tr\left( A(x) D^2 \phi(x) \right) \geq \ep^{2}\phi(x)(1-\La a^{2})+\frac{\phi(x)}{|x|}\la\ep a(d-1)\geq 0\quad \mbox{in} \ \Rd\setminus \{ 0 \}.
\end{equation}
Take $C := \exp(a\ell)$ so that $\phi(x) \geq 1$ in $|x| \leq \ell$. Define $\tilde \phi: = \ep^{-2} \min\{ 1, \phi \}$. Then $\tilde \phi$ satisfies the inequality
\begin{equation*} \label{}
\ep^2 \phi(x) - \tr\left( A(x) D^2 \phi(x) \right) \geq \chi_{B_{\ell}} \quad \mbox{in} \ \Rd.  
\end{equation*}
As $\tilde \phi>0$ on $\partial  B_R$, the comparison principle yields that $\tilde \phi \geq G_{\ep,R}(\cdot,0)$ for every $R>1$. Letting $R\to \infty$ yields the lemma.
\end{proof}

\subsection{Spectral gap inequalities}\label{ss.concentration}

In this subsection, we state the probabilistic tool we use to obtain the quantitative estimates for the modified correctors. The result here is applied precisely once in the paper, in Section~\ref{s.optimals}, and relies on the stronger independence condition (P3). It is a variation of the Efron-Stein (``spectral gap") inequality; a proof is given in Appendix~\ref{s.spectralgap}.

\begin{proposition}
\label{p.concentration}
Fix $\beta \in (0,2)$. Let $X$ be a random variable on $(\Omega_*,\F_*, \P_{*})$ and set 
\begin{equation*} \label{}
X_z' := \E_* \left[ X \,\vert\, \F_*(\Zd\setminus \{ z\}) \right] \qquad \mbox{and} \qquad \V_*\!\left[ X \right] := \sum_{z\in\mathbb{Z}^{d}} (X-X'_z)^2.
\end{equation*}
Then there exists $C(\beta)\geq 1$ such that
\begin{equation} \label{e.concentration}
\E_* \!\left[ \exp\left( \left| X - \E \left[ X\right] \right|^\beta \right) \right] \leq C \E_* \!\left[ \exp\left( \left(C\V_*\!\left[ X \right] \right)^{\frac{\beta}{2-\beta}} \right) \right]^\frac{2-\beta}{\beta}.
\end{equation}
\end{proposition}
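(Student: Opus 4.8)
The plan is to derive the stretched-exponential Efron--Stein inequality \eqref{e.concentration} from a more standard exponential moment estimate by a truncation/optimization argument, combined with a concentration statement for sums. The natural starting point is the classical Efron--Stein inequality in its exponential form: for a random variable $X$ on a product space, one has for any $\gamma \in (0,1)$ (or in an appropriate regime) a bound of the shape
\begin{equation*}
\E_* \!\left[ \exp\left( \gamma \left( X - \E_*[X]\right) \right) \right] \leq \exp\left( C\gamma^2 \,\E_*\!\left[ \exp\left( \gamma (X - \E_*[X])\right) \V_*[X] \right] \right),
\end{equation*}
or a self-bounding variant thereof; this type of inequality is due to Boucheron--Lugosi--Massart and can be proved by the standard tensorization (sub-additivity of entropy) argument applied to $\mathrm{Ent}_{\P_*}(e^{\gamma X})$, using that $X'_z$ is $\F_*(\Zd \setminus\{z\})$-measurable and the single-site Poincaré/entropy bound on each coordinate $\Omega_0$. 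First I would state and prove (or quote) such a one-parameter exponential estimate for both $X - \E_*[X]$ and its negative.

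Next, the point is to pass from the quadratic exponent $\gamma^2$ on the right to the matching nonlinearity: if $\V_*[X]$ were bounded by a deterministic constant $K$, the above would give $\E_*[\exp(\gamma(X-\E_* X))] \leq \exp(CK\gamma^2)$, and then optimizing the Chernoff bound $\P_*[|X - \E_* X| \geq t] \leq 2\exp(CK\gamma^2 - \gamma t)$ over $\gamma$ yields a Gaussian tail $\exp(-t^2/(4CK))$, hence $\E_*[\exp(|X-\E_* X|^2/(8CK))] \leq C$. The content of \eqref{e.concentration} with general $\beta \in (0,2)$ and a random $\V_*[X]$ is the interpolated version: we only want $\beta$-stretched-exponential integrability of $X - \E_* X$, and correspondingly we only need $\frac{\beta}{2-\beta}$-stretched-exponential integrability of $\V_*[X]$. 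I would handle the randomness of $\V_*[X]$ by splitting into the event $\{\V_*[X] \leq K\}$ and its complement, applying the bounded-variance argument on the former with $K$ a free parameter, and using the assumed moment $\E_*[\exp((C\V_*[X])^{\beta/(2-\beta)})]$ to control the latter via Chebyshev; then optimize over $K$ as a function of $t$. The exponents $\beta$ and $\frac{\beta}{2-\beta}$ are exactly Young-conjugate in the relevant sense ($\frac1\beta$ and the complementary rate make $\exp(-t^\beta)$ come out after the $\sup_K$), so the bookkeeping should close.

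Concretely, the steps in order are: (1) establish the exponential Efron--Stein/entropy inequality on the product space $(\Omega_*,\F_*,\P_*)$ by tensorization, obtaining $\log \E_*[\exp(\gamma(X - \E_* X))] \lesssim \gamma^2 \esssup \V_*[X]$ when $\V_*[X]$ is bounded, and more generally a form usable with the conditional structure; (2) on the event $\{\V_*[X] \leq K\}$, deduce the Gaussian-type Laplace transform bound and hence $\P_*[|X - \E_* X| \geq t, \ \V_*[X] \leq K] \leq 2\exp(-ct^2/K)$; (3) on $\{\V_*[X] > K\}$, bound the probability by $\exp(-(CK)^{\beta/(2-\beta)})\,\E_*[\exp((C\V_*[X])^{\beta/(2-\beta)})]$; (4) choose $K \sim t^{2-\beta}$ to balance the two contributions, getting $\P_*[|X - \E_* X| \geq t] \lesssim \exp(-c t^\beta)$, with the constant controlled by the right side of \eqref{e.concentration}; (5) integrate the tail to recover the stated exponential-moment form. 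The main obstacle I anticipate is step (1) in the case where $\V_*[X]$ is genuinely random rather than bounded: one cannot simply pull $\esssup \V_*[X]$ out, so the tensorization must be done carefully — either by first conditioning on $\{\V_*[X] \leq K\}$ in a way compatible with the product structure (which is slightly delicate, since truncating $X$ changes the martingale differences $X - X'_z$), or by using a self-improving/iterative version of the entropy inequality that tracks $\V_*[X]$ as a random multiplier. Getting the constants and the precise exponent $\frac{\beta}{2-\beta}$ right through this truncation is where the real work lies; the rest is Chernoff optimization and is routine.
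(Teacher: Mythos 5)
Your route is genuinely different from the paper's, and the difficulty you flag yourself in step (1)/(2) is, in my view, a real gap rather than a technical nuisance. The exponential Efron--Stein (entropy tensorization) inequality controls the Laplace transform of $X-\E_* X$ in terms of exponential moments of $\V_*[X]$, not in terms of $\esssup \V_*[X]$ restricted to a sub-event. There is no conditional version of the form
\begin{equation*}
\P_*\!\left[\, |X-\E_* X|\ge t,\ \V_*[X]\le K \,\right] \le 2\exp\!\left(-ct^2/K\right),
\end{equation*}
because the event $\{\V_*[X]\le K\}$ depends on all coordinates and is incompatible with the product (tensorization) structure; truncating $X$ to enforce a bounded vertical derivative changes the martingale differences $X-X_z'$ in a way you cannot control a priori. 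You anticipate exactly this obstacle but propose no mechanism to surmount it, and I don't see one that is both correct and elementary. There is a secondary issue in step (4)--(5): integrating a tail of the form $M\exp(-ct^\beta)$ against $\exp(t^\beta)$ requires $c>1$, which forces a careful rescaling of $X$ that then has to be traced back through the truncation parameter $K$; getting the clean form \eqref{e.concentration} this way is not routine.

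The paper avoids all of this by staying entirely at the level of polynomial moments. It takes the Boucheron--Lugosi--Massart moment inequality $\E_*[|X-\E_* X|^p]\le \kappa\,p^{p/2}\,\E_*[\V_*[X]^{p/2}]$ (valid for all $p\ge 2$, with explicit $\kappa$), expands $\exp(|X-\E_* X|^\beta)$ as a power series, applies the moment bound term by term (using the plain Efron--Stein inequality and Jensen for the low-order terms with $\beta n\le 2$), and then re-sums via Stirling and a discrete H\"older inequality with exponents $2/\beta$ and $2/(2-\beta)$. The exponent $\beta/(2-\beta)$ on $\V_*[X]$ drops out of the H\"older step combined with Jensen. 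This is both more elementary and more robust: it never requires any tail truncation, never needs exponential moments of $\V_*[X]$ (which for $\beta<1$ you do not have), and the constant bookkeeping is transparent. If you want to salvage your approach, the honest way is to replace your steps (1)--(2) with the BLM exponential inequality $\log\E_*[e^{\lambda(X-\E_* X)}]\le \frac{\lambda\theta}{1-\lambda\theta}\log\E_*[e^{\lambda V/\theta}]$ and optimize $\lambda,\theta$ against the assumed stretched-exponential moment of $\V_*[X]$ --- but that calculation is considerably more delicate than the moment-series argument, and I'd recommend the latter.
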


The conditional expectation $X_z'$ can be identified by resampling the random environment near the point $z$ (this is explained in depth in Section \ref{s.sensitivity}). Therefore, the quantity $(X-X'_{z})$ measures changes in $X$ with respect to changes in the environment near $z$. Following~\cite{GNO1}, we refer to $(X-X'_{z})$ as the \emph{vertical derivative} of $X$ at the point $z$.

\subsection{Suboptimal error estimate}

We recall the main result of \cite{AS3}, which is the basis for much of the analysis in this paper. We reformulate their result slightly, to put it in a form which is convenient for our use here.

\begin{proposition}
\label{p.subopt}
Fix $\sigma \in (0,1]$ and $s \in (0,d)$. Let $\mathbb{P}$ satisfy (P1) and (P2). There exists an exponent $\alpha(\sigma,s,d,\lambda,\Lambda,\ell)>0$ and a nonnegative random variable $\X$ on $(\Omega,\F)$, satisfying
\begin{equation} \label{e.expboundX}
\E \left[ \exp\left( \X \right) \right]  \leq C(\sigma,s,d,\Lambda,\lambda,\ell) < \infty
\end{equation}
and such that the following holds: for every $R\geq1$, $f\in C^{0,\sigma}(B_R)$, $g\in C^{0,\sigma}(\partial B_R)$ and solutions $u,v\in C(\overline B_R)$ of the Dirichlet problems
\begin{equation} \label{e.DP}
\left\{ 
\begin{aligned}
& -\tr \left(A(x)D^{2}u\right) = f(x)& \mbox{in} & \ B_{R}, \\
& u = g & \mbox{on} & \ \partial B_R,
\end{aligned} 
\right.
\end{equation}
and
\begin{equation}
\label{e.DP2}
\left\{ 
\begin{aligned}
& -\tr \left(\overline{A}D^{2}v\right) = f(x) & \mbox{in} & \ B_{R}, \\
& v = g & \mbox{on} & \ \partial B_R,
\end{aligned} 
\right.
\end{equation}
we have, for a constant $C(\sigma,s,d,\Lambda,\lambda,\ell)\geq1$, the estimate
\begin{equation*} \label{}
R^{-2} \sup_{B_R} \left| u(x) - v(x) \right| \leq C R^{-\alpha} \left(1+\X R^{-s} \right) \Gamma_{R,\sigma}(f,g), 
\end{equation*}
where
\begin{equation*} \label{}
\Gamma_{R,\sigma}(f,g) := \sup_{B_R} \left| f \right| + R^{\sigma} \left[ f \right]_{C^{0,\sigma}(B_R)}  +  R^{-2} \osc_{\partial B_R} g + R^{-2+\sigma} \left[ g \right]_{C^{0,\sigma}(\partial B_R)}.
\end{equation*}
\end{proposition}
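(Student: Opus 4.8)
The plan is to deduce the proposition from the main quantitative homogenization estimate of~\cite{AS3}; the statement above is essentially a repackaging of that result, so the task is to put it in the stated form. Recall that, under~(P1)--(P2), the analysis of~\cite{AS3} produces an algebraic rate~$\alpha_0>0$ together with a random ``minimal scale''~$\mathcal{Y}$ on~$(\Omega,\F)$, carrying a stretched exponential integrability bound~$\P[\mathcal{Y}>t]\leq C\exp(-t^{\gamma})$ for some~$\gamma>0$, such that for every~$R\geq\mathcal{Y}$ and all admissible data~$(f,g)$ the solutions of~\eqref{e.DP} and~\eqref{e.DP2} obey~$R^{-2}\sup_{B_R}|u-v|\leq CR^{-\alpha_0}\Gamma_{R,\sigma}(f,g)$. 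Matching hypotheses requires only routine reductions: by linearity of the operators~$u\mapsto-\tr(A(x)D^2u)$ and~$u\mapsto-\tr(\overline{A}D^2u)$ one normalizes~$\Gamma_{R,\sigma}(f,g)=1$; since constants solve both equations, subtracting the average of~$g$ over~$\partial B_R$ leaves~$u-v$ unchanged and explains why only~$\osc_{\partial B_R}g$ appears; and if the cited estimate is stated in the macroscopic (``theatrical'') scaling for a single pair~$(u^\ep,u)$, one recovers the present formulation by rescaling and taking differences.

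To convert the minimal-scale statement into the multiplicative form asserted here, fix~$s\in(0,d)$, choose~$\alpha\in(0,\alpha_0]$ small (to be constrained below), and set~$\X:=\mathcal{Y}^{\,s+\alpha}$. If~$R\geq\mathcal{Y}=\X^{1/(s+\alpha)}$, then~\cite{AS3} gives directly~$R^{-2}\sup_{B_R}|u-v|\leq CR^{-\alpha_0}\Gamma_{R,\sigma}(f,g)\leq CR^{-\alpha}(1+\X R^{-s})\Gamma_{R,\sigma}(f,g)$, using~$R\geq1$ and~$\alpha\leq\alpha_0$. If instead~$1\leq R<\mathcal{Y}$, we use the deterministic a priori bound
\begin{equation*}
R^{-2}\sup_{B_R}|u-v|\leq C_0\,\Gamma_{R,\sigma}(f,g),
\end{equation*}
which follows from the Aleksandrov--Bakelman--Pucci estimate applied separately to~$u$ and~$v$ with the fixed ellipticity constants (the hypothesis~\eqref{e.Fcont} being used only to guarantee well-posedness and the comparison principle). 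Here~$R<\mathcal{Y}$ forces~$\X R^{-s}=\mathcal{Y}^{s+\alpha}R^{-s}>R^{\alpha}$, hence~$CR^{-\alpha}(1+\X R^{-s})\geq CR^{-\alpha}\X R^{-s}>CR^{-\alpha}R^{\alpha}=C\geq C_0$ once~$C$ is large, so the claimed estimate holds in this regime too. Finally~$\P[\X>t]=\P[\mathcal{Y}>t^{1/(s+\alpha)}]\leq C\exp(-t^{\gamma/(s+\alpha)})$, so~$\E[\exp(\X)]<\infty$ as soon as~$\gamma>s+\alpha$.

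The single delicate point is therefore the stochastic integrability: one must ensure that the stretched-exponential exponent~$\gamma$ furnished by~\cite{AS3} exceeds~$s+\alpha$, a quantity that may be close to~$d$. I expect this to be arranged using the freedom to shrink~$\alpha$ — in~\cite{AS3} the attainable rate and the attainable integrability exponent are traded against one another, so a smaller~$\alpha$ permits a larger~$\gamma$ — together, if necessary, with a direct appeal to the underlying quantitative ergodic estimate (whose tails come from counting the~$O(R^d)$ independent cells in~$B_R$) to absorb the bounded power~$s+\alpha$. This exponent-matching should be the only genuine obstacle; the rescaling, the normalization of the data, the constant-subtraction for~$g$, and the ABP bound in the subcritical regime are all routine, and if the formulation in~\cite{AS3} already carries a random prefactor of the form~$1+\X R^{-s}$ then even this point is moot.
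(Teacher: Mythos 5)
The paper offers no proof of Proposition~\ref{p.subopt}: it is introduced only with the sentence that it ``recalls the main result of~\cite{AS3}'' in a slightly reformulated form, and the reader is expected to perform exactly the kind of translation you carry out. Your reconstruction is correct and is, up to the details one cannot check without opening~\cite{AS3}, the intended derivation: the dichotomy on~$R\geq\mathcal{Y}$ versus~$R<\mathcal{Y}$, the deterministic ABP bound~$R^{-2}\sup_{B_R}|u-v|\leq C_0\Gamma_{R,\sigma}(f,g)$ (obtained by applying ABP separately to~$u-\inf_{\partial B_R}g$ and~$v-\inf_{\partial B_R}g$), the choice~$\X:=\mathcal{Y}^{s+\alpha}$ so that~$R<\mathcal{Y}$ forces~$\X R^{-s}>R^{\alpha}$, and the integrability check~$\E[\exp(\X)]<\infty$ provided the stretched-exponential exponent of~$\mathcal{Y}$ exceeds~$s+\alpha$, are all sound. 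Your final caveat is also the right one to flag: the whole argument hinges on~\cite{AS3} supplying~$\E[\exp(\mathcal{Y}^{s'})]<\infty$ for every~$s'<d$ (tails governed by the~$O(R^d)$ independent cells), which is precisely what makes the exponent~$s<d$ in the statement attainable; since~$s$ and a small~$\alpha$ satisfy~$s+\alpha<d$, the matching goes through. Nothing is missing beyond confirmation of the exact form in which~\cite{AS3} states its theorem, a point the present paper itself elides.
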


We note that, in view of the comparison principle, Proposition~\ref{p.subopt} also give one-sided estimates for subsolutions and supersolutions.

\section{Uniform \texorpdfstring{$C^{1,1}$}{C11} Estimates}\label{s.reg}

In this section, we present a large-scale ($R\gg 1$) regularity theory for solutions of the equation
\begin{equation}
\label{e.pder}
-\tr\left(A(x)D^{2}u\right)=f(x) \quad \mbox{in} \ B_R
\end{equation}
Recall that according to the Krylov-Safonov H\"{o}lder regularity theory \cite{CC}, there exists an exponent~$\sigma(d,\lambda,\Lambda)\in (0,1)$ such that $u$ belongs to $C^{0, \sigma}(B_{R/2})$ with the estimate
\begin{equation}\label{e.classicKS}
R^{\sigma-2}\left[u\right]_{C^{0, \sigma}({B_{R/2})}}\leq C\left(R^{-2} \osc_{B_{R}} u+ \left( \fint_{B_R} \left| f(x)\right|^d\,dx \right)^{\frac1d}\right).
\end{equation}
This is the best possible estimate, independent of the size of $R$, for solutions of general equations of the form~\eqref{e.pder}, even if the coefficients are smooth. What we show in this section is that, due to statistical effects, solutions of our random equation are typically much more regular, at least on scales larger than~$\ell$, the length scale of the correlations of the coefficient field. Indeed, we will show that solutions, with high probability, are essentially $C^{1,1}$ on scales larger than the unit scale. 

\smallskip

The arguments here are motivated by a similar $C^{0,1}$ regularity theory for equations in divergence form developed in~\cite{AS2} and should be seen as a nondivergence analogue of those estimates. In fact, the arguments here are almost identical to those of~\cite{AS2}. They can also be seen as generalizations to the random setting of the results of Avellaneda and Lin for periodic coefficients, who proved uniform $C^{0,1}$ estimates for divergence form equations~\cite{AL1} and then $C^{1,1}$ estimates for the nondivergence case~\cite{AL2}. Note that it is natural to expect estimates in nondivergence form to be one derivative better than those in divergence form (e.g., the Schauder estimates). Note that the ``$C^{1,1}$ estimate'' proved in~\cite{GNO2} has a different statement which involves correctors; the statement we prove here would be simply false for divergence form equations.

\smallskip

The rough idea, similar to the proof of the classical Schauder estimates, is that, due to homogenization, large-scale solutions of~\eqref{e.pder} can be well-approximated by those of the homogenized equation. Since the latter are harmonic, up to a change of variables, they possess good estimates. If the homogenization is fast enough (for this we need the results of~\cite{AS3}, namely Proposition~\ref{p.subopt}), then the better regularity of the homogenized equation is inherited by the heterogeneous equation. This is a quantitative version of the idea introduced in the context of periodic homogenization by Avellaneda and Lin \cite{AL1, AL2}.

\smallskip

Throughout this section, we let $\mathcal{Q}$ be the set of polynomials of degree at most two 
and let $\mathcal{L}$ denote the set of affine functions. For $\sigma \in (0,1]$ and $U\subseteq\Rd$, we denote the usual H\"older seminorm by $\left[ \cdot\right]_{C^{0,\sigma}(U)}$.

\begin{theorem}[{$C^{1,1}$ regularity}]
\label{t.regularity}
Assume (P1) and (P2). Fix $s\in (0,d)$ and $\sigma \in (0,1]$. There exists an $\F$--measurable random variable $\X$ and a constant $C(s,\sigma,d,\lambda,\Lambda,\ell)\geq 1$ satisfying
\begin{equation} \label{e.scrbound}
\E \left[ \exp\left( \X^s  \right) \right] \leq C < \infty
\end{equation}
such that the following holds: for every $M\in\mathbb{S}^{d}$, $R\geq 2\X$, $u\in C(B_R)$ and $f\in C^{0,\sigma}(B_R)$ satisfying 
\begin{equation*} \label{}
-\tr\left(A(x)(M+D^{2}u) \right) = f(x) \quad \mbox{in} \ B_R
\end{equation*}
and every $r\in \left[ \X, \frac12 R \right]$,
 we have the estimate
\begin{multline} \label{e.pwC11}
\frac{1}{r^2} \inf_{l\in\L} \sup_{B_r} |u-l|
\\
\leq  C\left( \left|f(0)+\tr(\overline{A}M)\right| + R^{\sigma}\! \left[ f \right]_{C^{0,\sigma}(B_R)} + \frac{1}{R^2} \inf_{l\in\L} \sup_{B_R} |u-l| \right).
\end{multline}
\end{theorem}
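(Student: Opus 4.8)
The plan is to run the standard Avellaneda--Lin excess-decay iteration, using Proposition~\ref{p.subopt} as the homogenization input that allows us to ``borrow'' the interior regularity of the constant-coefficient operator $-\tr(\overline A D^2\cdot)$. Fix the affine-function excess
\[
E(r):=\frac{1}{r^2}\inf_{l\in\L}\sup_{B_r}|u-l|,
\]
and let $\X$ be a random variable (to be specified) dominating the random variable from Proposition~\ref{p.subopt} and a large deterministic constant; the moment bound~\eqref{e.scrbound} with exponent $s$ follows from~\eqref{e.expboundX} after adjusting the parameter $s$ used when invoking Proposition~\ref{p.subopt} (note $C^{0,\sigma}$ data is what that proposition takes). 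The key one-step lemma is: there exist $\theta\in(0,\tfrac12)$ and $C\ge1$, depending only on $(d,\lambda,\Lambda)$, and an exponent $\alpha>0$, such that for all $r$ with $\X\le\theta r\le r\le\tfrac12 R$,
\[
E(\theta r)\;\le\;\tfrac12\,\theta\, E(r)\;+\;C\Bigl(|f(0)+\tr(\overline A M)|+r^{\sigma}[f]_{C^{0,\sigma}(B_r)}\Bigr)\;+\;C r^{-\alpha}\bigl(\cdots\bigr),
\]
where the $r^{-\alpha}$ term is the homogenization error and, crucially, once $r\ge\X$ it is already summable/controllable. Here the contraction factor is really $C\theta^{?}$ coming from the $C^{1,1}$ (hence $C^{1,\gamma}$ for all $\gamma<1$, so effectively quadratic) decay of affine-excess for solutions of $-\tr(\overline A D^2 v)=\text{const}$: after the linear change of variables diagonalizing $\overline A$, such $v$ solve Poisson's equation, so $v$ minus its first-order Taylor polynomial is $O(r^2)$ near a point; choosing $\theta$ small beats any fixed constant.

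To prove the one-step lemma I would, at scale $r$, let $v$ solve $-\tr(\overline A D^2 v)=f(0)$ in $B_{r}$ with $v=u$ on $\partial B_{r}$, and also subtract off the affine part: replacing $u$ by $u-l_r$ where $l_r$ is the optimal affine function at scale $r$, and absorbing $\tr(\overline A M)$ and the constant $f(0)$ into the forcing, we may assume $\inf_\L\sup_{B_r}|u|=r^2 E(r)$ and the effective right-hand side is $g(x):=f(x)-f(0)$, which is $C^{0,\sigma}$ with $[g]=[f]$ and $\sup_{B_r}|g|\le r^\sigma[f]$. By Proposition~\ref{p.subopt} applied on $B_{r}$ (with the roles: $A$-solution $u$, $\overline A$-solution $v$, same Dirichlet data, forcing $g+f(0)+\tr(\overline A M)$ shifted appropriately),
\[
r^{-2}\sup_{B_{r}}|u-v|\;\le\;C r^{-\alpha}\bigl(1+\X r^{-s}\bigr)\,\Gamma_{r,\sigma},
\]
and for $r\ge\X$ the factor $(1+\X r^{-s})\le 2$. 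The quantity $\Gamma_{r,\sigma}$ is controlled by $|f(0)+\tr(\overline A M)|+r^\sigma[f]+r^{-2}\osc_{\partial B_r}u+r^{-2+\sigma}[u]_{C^{0,\sigma}(\partial B_r)}$; the boundary oscillation and Hölder seminorm of $u$ on $\partial B_r$ are in turn bounded by $E(r)$ together with interior Krylov--Safonov~\eqref{e.classicKS} applied at the previous dyadic scale (this is where the hypothesis $r\le\tfrac12 R$ and working with dyadic $r$ matter). Then for $v$: interior $C^{1,1}$ for $-\tr(\overline A D^2 v)=\text{const}$ gives $\inf_{l\in\L}\sup_{B_{\theta r}}|v-l|\le C\theta^{2}\,r^2\bigl(r^{-2}\sup_{B_r}|v|+|f(0)|\bigr)\le C\theta^{2}r^2\bigl(E(r)+\text{l.o.t.}\bigr)$. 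Combining via the triangle inequality $\inf_\L\sup_{B_{\theta r}}|u-l|\le\inf_\L\sup_{B_{\theta r}}|v-l|+\sup_{B_{\theta r}}|u-v|$ and dividing by $(\theta r)^2$ yields the one-step estimate, with the contraction $C\theta^{2}/\theta^{2}\cdot\theta^{2}=C\theta^{?}$— concretely the decay beats $\tfrac12\theta$ once $\theta$ is chosen small relative to the Schauder constant $C$.

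Finally I would iterate: writing $r_k=\theta^k R$, the recursion $E(r_{k+1})\le\tfrac12\theta E(r_k)+C\bigl(K+r_k^{-\alpha}(\cdots)\bigr)$ with $K:=|f(0)+\tr(\overline A M)|+R^{\sigma}[f]_{C^{0,\sigma}(B_R)}$ (monotone in the scale, so $r_k^\sigma[f]\le R^\sigma[f]$) sums geometrically: $E(r)\le C\bigl(K + E(R)\bigr)$ for all $r\in[\X,\tfrac12 R]$, because $\sum_k\theta^{k(\cdots)}<\infty$ and the $r_k^{-\alpha}$ terms are even more strongly summable. Non-dyadic $r$ are handled by comparison with the nearest dyadic scale. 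I expect the main obstacle to be the bookkeeping of constants in the one-step lemma — in particular making the homogenization-error term genuinely negligible uniformly in the scale (this forces the restriction $r\ge\X$ and a careful choice of the parameter $s$ in Proposition~\ref{p.subopt}, since $\X$ depends on $s$), and ensuring the boundary data terms $r^{-2}\osc_{\partial B_r}u$ and $r^{-2+\sigma}[u]_{C^{0,\sigma}(\partial B_r)}$ in $\Gamma_{r,\sigma}$ are both absorbed into $E(r)$ plus the $K$-term rather than generating a term with a bad constant that the contraction cannot beat; the Krylov--Safonov estimate~\eqref{e.classicKS} at one scale up is exactly the tool for the latter, but it must be invoked on the right ball.
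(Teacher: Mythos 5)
Your outline reproduces the correct high-level strategy (compare to the constant-coefficient problem via Proposition~\ref{p.subopt}, control $\Gamma_{r,\sigma}$ with Krylov--Safonov, iterate over dyadic scales), but the one-step lemma you propose is false, and the reason is exactly the place where the paper's Proposition~\ref{p.quadapprox} does nontrivial work. You claim
\[
E(\theta r)\;\le\;\tfrac12\theta\,E(r)\;+\;\cdots
\]
for the \emph{affine} excess $E(r)=r^{-2}\inf_{l\in\L}\sup_{B_r}|u-l|$, and you justify the contraction by interior $C^{1,1}$ for $v$: $\inf_{l\in\L}\sup_{B_{\theta r}}|v-l|\le C\theta^2 r^2\bigl(r^{-2}\sup_{B_r}|v|+|f(0)|\bigr)$. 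But once you divide by $(\theta r)^2$ the factor $\theta^2$ cancels exactly, giving $E_v(\theta r)\le C\bigl(E_u(r)+|f(0)|\bigr)$ with $C>1$ the interior Schauder constant and \emph{no} $\theta$ gain at all; shrinking $\theta$ does not help. Concretely, $u(x)=x_1^2-x_2^2$ is a counterexample: with $\overline A=I$, $f=0$, $M=0$, the affine excess $E(r)\equiv 1$ does not contract at any scale, so the inequality $E(\theta r)\le\tfrac12\theta E(r)$ cannot hold. An iteration with coefficient $C>1$ then accumulates as $C^{\log(R/r)/|\log\theta|}$ and does not give~\eqref{e.pwC11}.

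What actually contracts (by one extra power of $\theta$, so that choosing $\theta$ small beats the Schauder constant) is the \emph{quadratic} excess $G(r)=r^{-2}\inf_{q\in\mathcal Q}\sup_{B_r}|u-q|$; this is Lemma~\ref{l.checkdyad}. The gap in your argument is precisely the passage from ``quadratic excess decays'' to ``affine excess at all intermediate scales is bounded,'' and it cannot be dispatched by a single contraction inequality: one must iterate $G$, use the resulting summable control to bound the Hessians $|Q_j|$ of the approximating quadratics by $\sum_i G_i$ (equation~\eqref{e.Qkbound}), and then recover $H_j\le G_j+\tfrac12|Q_j|$ from~\eqref{e.GjHj}. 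That bookkeeping is the whole content of Proposition~\ref{p.quadapprox} (Steps 2 and 3), which your sketch does not supply and which is not a routine detail. If you replace your one-step lemma by the quadratic-excess version and import the two-quantity iteration, the rest of your outline (ABP and Krylov--Safonov to control $\Gamma_{r,\sigma}$, the cutoff $r\ge\X$ to tame the $(1+\X r^{-s})$ factor) matches the paper's Steps~1--2 of the proof of Theorem~\ref{t.regularity}.
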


It is well-known that any~$L^\infty$ function which can be well-approximated on all scales by smooth functions must be smooth. The proof of Theorem~\ref{t.regularity} is based on a similar idea: any function~$u$ which can be well-approximated on all scales above a fixed scale $\X$, which is of the same order as the microscopic scale, by functions~$w$ enjoying an \emph{improvement of quadratic approximation} property must itself have this property. This is formalized in the next proposition, the statement and proof of which are similar to those of~\cite[Lemma 5.1]{AS3}.

\begin{proposition}
\label{p.quadapprox}

For each $r>0$ and $\theta \in (0,\tfrac12)$, let $\mathcal A(r,\theta)$ denote the subset of $L^\infty(B_r)$ consisting of $w$ which satisfy
\begin{equation*} \label{}
\frac{1}{(\theta r)^2} \inf_{q\in \mathcal Q} \sup_{x\in B_{\theta r}} \left| w(x) - q(x) \right| \leq \frac12 \left(  \frac1{r^2} \inf_{q\in \mathcal Q} \sup_{x\in B_{r}} \left| w(x) - q(x) \right| \right).
\end{equation*}
Assume that $\alpha, \gamma, K,L>0$, $1 \leq 4r_0 \leq R$ and $u\in L^\infty(B_R)$ are such that, for every $r \in [r_0,R/2]$, there exists $v\in \mathcal A(r,\theta)$ such that 
\begin{equation} \label{e.quappr}
\frac1{r^2}\sup_{x\in B_r}\left| u(x) -  v(x) \right| \leq r^{-\alpha} \left( K +  \frac1{r^2}\inf_{l\in\L} \sup_{x\in B_{2r}} \left| u(x) - l(x) \right| \right) + Lr^\gamma.
\end{equation}
Then there exist $\beta(\theta) \in (0,1]$ and $C(\alpha,\theta,\gamma) \geq 1$ such that, for every $s\in [r_0,R/2]$, 
\begin{equation} \label{e.C11lem2}
\frac{1}{s^2} \inf_{l\in\L} \sup_{x\in B_s} \left| u(x) - l(x) \right| \leq C\left(K+LR^\gamma+\frac{1}{R^2} \inf_{l\in\L} \sup_{x\in B_R} \left| u(x) - l(x) \right| \right).
\end{equation}
and
\begin{multline} \label{e.C11lem1}
\frac{1}{s^2} \inf_{q\in\mathcal Q} \sup_{x\in B_s} \left| u(x) - q(x) \right| 
\leq C\left( \frac{s}{R} \right)^{\beta} \left( LR^\gamma+ \frac{1}{R^2} \inf_{q\in\mathcal Q} \sup_{x\in B_R} \left| u(x) - q(x) \right|  \right) \\
+ Cs^{-\alpha} \left( K+LR^\gamma + \frac{1}{R^2} \inf_{l\in\L} \sup_{x\in B_R} \left| u(x) - l(x) \right| \right).
\end{multline}
\end{proposition}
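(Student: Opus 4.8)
The plan is to prove Proposition~\ref{p.quadapprox} by a standard Campanato-type iteration, where the key point is that the improvement-of-quadratic-approximation property of the approximating functions $v \in \mathcal A(r,\theta)$ is transferred to $u$ itself along a geometric sequence of scales, modulo the two error terms $Kr^{-\alpha}$ and $Lr^\gamma$ in \eqref{e.quappr}. I would introduce the two quantities
\begin{equation*}
D(r) := \frac{1}{r^2} \inf_{q\in\mathcal Q} \sup_{B_r} |u - q| \qquad \text{and} \qquad E(r) := \frac{1}{r^2} \inf_{l\in\L} \sup_{B_r} |u - l|,
\end{equation*}
and first establish an almost-monotonicity/excess-decay recursion for $E(r)$. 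Fixing $r \in [r_0, R/2]$ and choosing the competitor $v \in \mathcal A(r,\theta)$ from the hypothesis, one compares the best affine approximation of $u$ on $B_{\theta r}$ to that of $v$: since $v$ has an affine (indeed quadratic) approximation on $B_{\theta r}$ controlled by its approximation on $B_r$, and since $u$ and $v$ differ by at most $r^2 \cdot [r^{-\alpha}(K + E(2r)) + Lr^\gamma]$ on $B_r$, one obtains something of the shape
\begin{equation*}
E(\theta r) \leq C\theta^{?}\bigl(E(r) + \text{(quadratic part)}\bigr) + C\theta^{-2}\bigl(r^{-\alpha}(K + E(2r)) + Lr^\gamma\bigr).
\end{equation*}
The harmless observation that affine functions are quadratic, so $E(r) \le D(r)$ always, lets me run a coupled recursion in $D$ and $E$.

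Next I would iterate. For $D(r)$, the defining property of $\mathcal A(r,\theta)$ gives a genuine contraction $D(\theta r) \le \tfrac12 D(r) + (\text{errors})$ after absorbing the $u$–$v$ discrepancy, so summing the geometric series along $r_k = \theta^k R$ (down to the first scale $\ge r_0$) produces \eqref{e.C11lem1}: the $\tfrac12$–contraction yields the clean geometric factor $(s/R)^\beta$ with $\beta = \beta(\theta) = \log 2 / \log(1/\theta)$ (shrunk if necessary so that $\beta \le \min(\alpha,\gamma,1)$ and the error-series converges), while the accumulated errors $\sum_k r_k^{-\alpha}(\cdots) + L\sum_k r_k^\gamma$ are dominated by $C s^{-\alpha}(K + LR^\gamma + E(R))$ and $CLR^\gamma$ respectively, after bounding each $E(2r_k)$ appearing inside by the already-controlled quantity on the right of \eqref{e.C11lem2}. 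For \eqref{e.C11lem2} itself, the point is that $E$ need not contract — affine approximations can only improve up to the second-order ``curvature'' of $u$ — but from the recursion above and the fact that the quadratic error $D(r)$ is controlled, one gets $E(s) \le C E(2s) + (\text{errors})$ type bounds that telescope to show $E$ stays bounded by its value at the top scale $R$ plus the errors, i.e. \eqref{e.C11lem2}. A convenient order is: first prove \eqref{e.C11lem2} (the cruder affine bound), then feed it into the iteration for $D$ to get \eqref{e.C11lem1}.

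The main obstacle I anticipate is bookkeeping the interaction between the affine and quadratic excesses correctly — in particular, ensuring that when one passes from $v$'s quadratic approximation on $B_{\theta r}$ back to an affine statement about $u$, the $\inf_{l\in\L}$ on $B_{2r}$ appearing on the right of \eqref{e.quappr} (rather than on $B_r$) is handled, which forces the iteration to be set up with a small overlap of scales (working with $r$ and $2r$ simultaneously, or equivalently choosing $r_k$ so that $2r_{k+1} \le r_k$, which needs $\theta$ small — but $\theta$ is given, so instead one dyadically refines and uses $E(2r) \le C\theta^{-2} E(\lceil \log_\theta 2\rceil\text{-many steps up})$, harmlessly). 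A secondary subtlety is the constraint $1 \le 4r_0 \le R$, which guarantees there is at least one full step of iteration available and that the geometric sums start at a scale where $r^{-\alpha} \le 1$; this must be invoked to make the constant $C$ depend only on $\alpha, \theta, \gamma$ and not on $r_0$ or $R$. None of these is deep — the proposition is essentially \cite[Lemma 5.1]{AS3} adapted from gradients to Hessians — but getting all the exponents and the scale-overlap consistent is where the care lies.
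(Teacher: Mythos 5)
Your overall strategy — a Campanato-type iteration along a geometric sequence of scales, tracking both the quadratic excess $D(r)$ and the affine excess $E(r)$, with the improvement-of-quadratic-approximation property providing contraction in $D$ while $E$ is merely bounded because the Hessian ``curvature'' accumulates — is exactly what the paper does (its $G$ and $H$ are your $D$ and $E$). The handling of the scale overlap ($E(2r)$ on the right of \eqref{e.quappr}), by choosing the dyadic ratio so that $2s_{j+1}\le s_j$, is also the same.

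However, there is a genuine circularity in the order of deductions you propose. You want to first establish \eqref{e.C11lem2} (the bound on $E$) and then ``feed it into the iteration for $D$'' to get \eqref{e.C11lem1}. But bounding $E(s)$ for $s<R$ is impossible without first controlling the accumulated Hessians $|Q_j|$ of the best quadratic approximants, and the only handle on $|Q_j|$ is the telescoping bound $|Q_j|\lesssim E(R)+\sum_{i\le j}D(s_i)$, which requires the $D$-iteration to have already been carried out. Conversely, the $D$-iteration appears to require $E(2s_j)$ on the right — exactly the circularity. The resolution (as in the paper) is a single coupled induction: the induction hypothesis is a quantitative bound on $D(s_j)$ at each scale, and inside the inductive step one controls $E(2s_{j+1})$ not by a separately proved \eqref{e.C11lem2} but by the chain $E\le D+\tfrac12|Q|$ and $|Q_j|\lesssim E(R)+\sum_{i\le j}D(s_i)$, which involves only already-controlled quantities (the top-scale $E(R)$ and the $D$'s at coarser scales). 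Both \eqref{e.C11lem1} and \eqref{e.C11lem2} then fall out simultaneously from the resulting bound on $D(s_j)$. Your write-up has the right ingredients but the claimed two-pass structure, taken literally, would not close; you should replace it with this one-pass coupled induction, with the $D$-bound as the driving inductive statement.

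A minor additional remark: the geometric factor $(s/R)^\beta$ in \eqref{e.C11lem1} comes from the $\tfrac12$-contraction across scales in ratio $\theta$, giving $\beta=\log 2/|\log\theta|$; there is no need to shrink $\beta$ below $\min(\alpha,\gamma,1)$ as you suggest, since the error terms in \eqref{e.C11lem1} are reported separately with their own $s^{-\alpha}$ and $R^\gamma$ prefactors rather than being absorbed into the $(s/R)^\beta$ factor.
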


\begin{proof}
Throughout the proof, we let $C$ denote a positive constant which depends only on $(\alpha,\theta,\gamma)$ and may vary in each occurrence. We may suppose without loss of generality that $\alpha \leq 1$ and that $\gamma \leq c$ so that $\theta^\gamma \geq \frac23$.

\smallskip

\emph{Step 1.} We set up the argument. We keep track of the two quantitites
\begin{equation*} \label{}
G(r):= \frac1{r^2} \inf_{q\in \mathcal Q} \sup_{x\in B_r} \left| u(x) - q(x) \right| \quad \mbox{and} \quad H(r):=  \frac1{r^2} \inf_{l\in \mathcal L} \sup_{x\in B_r} \left| u(x) - l(x) \right|.
\end{equation*}
By the hypotheses of the proposition and the triangle inequality, we obtain that, for every $r\in [r_0,R/2]$,
\begin{equation} \label{e.improvequad}
G(\theta r) \leq \frac12 G(r) +C r^{-\alpha} \left( K + H(2r) \right)+ Lr^\gamma.
\end{equation}
Denote $s_0:=R$ and $s_{j}:= \theta^{j-1} R/4$ and let $m\in\N$ be such that 
\begin{equation*} \label{}
s_m^{-\alpha} \leq \frac14  \leq s_{m+1}^{-\alpha}.
\end{equation*}
Denote $G_j:= G(s_j)$ and $H_j := H(s_j)$. Noting that $\theta \leq \frac12$, from~\eqref{e.improvequad} we get, for every $j\in\{ 1,\ldots,m-1\}$,
\begin{equation} \label{e.improvequad2}
 G_{j+1} \leq \frac12  G_j + C s_j^{-\alpha} \left( K + H_{j-1} \right) + Ls_{j}^\gamma.
\end{equation}
For each $j\in\{ 0,\ldots,m-1\}$, we may select $q_j\in \mathcal Q$ such that 
\begin{equation*} \label{}
G_j = \frac{1}{s_j^2} \sup_{x\in B_{s_j}} \left| u(x) - q_j(x) \right|.
\end{equation*}
We denote the Hessian matrix of $q_j$ by $Q_j$. The triangle inequality implies that 
\begin{equation} \label{e.GjHj}
G_j \leq H_j \leq G_j + \frac{1}{s_j^2}\sup_{x\in B_{s_j}} \frac12 x \cdot Q_jx = G_j + \frac12 |Q_j|,
\end{equation}
and 
\begin{align*}
\frac{1}{s_j^2} \sup_{x\in B_{s_{j+1}}} \left| q_{j+1}(x) - q_j(x) \right|  \leq G_j + \theta^2 G_{j+1}.
\end{align*}
The latter implies 
\begin{equation*} 
\left| Q_{j+1} - Q_j \right| \leq \frac{2}{s_{j+1}^2} \sup_{x\in B_{s_{j+1}}} \left| q_{j+1}(x) - q_j(x) \right| \leq  \frac{2}{\theta^{2}} G_j + 2G_{j+1}.
\end{equation*}
In particular, 
\begin{equation}\label{e.Qjdiff}
|Q_{j+1}| \leq |Q_j| + C \left( G_j + G_{j+1} \right)
\end{equation}
Similarly, the triangle inequality also gives 
\begin{equation} \label{e.Qjstup}
|Q_j| = \frac{2}{s_j^2} \inf_{l\in\L} \sup_{x\in B_{s_j}} \left| q_j (x) - l(x) \right| \leq 2G_j + 2H_j \leq 4H_j.
\end{equation}
Thus, combining \eqref{e.Qjdiff} and \eqref{e.Qjstup}, yields
\begin{equation} \label{e.Qkbound}
|Q_j| \leq |Q_0| + C\sum_{i=0}^j G_i \leq C \left( H_0 + \sum_{i=0}^j G_i \right).
\end{equation}
Next, combining~\eqref{e.improvequad2}~\eqref{e.GjHj} and~\eqref{e.Qkbound}, we obtain, for every $j\in \{0,\ldots,m-1\}$,
\begin{align} \label{e.Gjdiff}
 G_{j+1} & \leq \frac12  G_{j} + C s_j^{-\alpha} \left( K +  H_0 + \sum_{i=0}^j G_i \right) + Ls_j^\gamma.
\end{align}
The rest of the argument involves first iterating~\eqref{e.Gjdiff} to obtain bounds on $G_j$, which yield bounds on $|Q_j|$ by~\eqref{e.Qkbound}, and finally on $H_j$ by~\eqref{e.GjHj}. 

\smallskip

\emph{Step 2.} We show that, for every $j\in \{ 1,\ldots, m\}$, 
\begin{equation} \label{e.GjdiffmaxQj}
G_j \leq 2^{-j} G_0 + C s_j^{-\alpha} \left( K+H_0 \right) + CL \left( s_j^\gamma+ R^\gamma s_j^{-\alpha}  \right). 
\end{equation}
We argue by induction. Fix $A,B\geq 1$ (which are selected below) and suppose that $k\in \{ 0,\ldots,m-1\}$ is such that, for every $j \in\{ 0,\ldots,k\}$, 
\begin{equation*} \label{}
G_j \leq 2^{-j} G_0 + As_j^{-\alpha} \left( K+H_0 \right) + L\left( A  s_j^\gamma+ BR^\gamma s_j^{-\alpha}  \right).
\end{equation*}
Using~\eqref{e.Gjdiff} and this induction hypothesis (and that $G_0 \leq H_0$), we get
\begin{align*}
G_{k+1} & \leq \frac12  G_{k} + C s_k^{-\alpha} \left( K +  H_0 + \sum_{j=0}^k G_j \right) + Ls_{k}^\gamma \\
& \leq \frac12 \left( 2^{-k} G_0 + As_k^{-\alpha} \left( K+H_0 \right) + L \left( As_k^\gamma+ BR^\gamma s_k^{-\alpha}  \right) \right) + Ls_k^\gamma \\
& \qquad + Cs_k^{-\alpha} \left(K+H_0+\sum_{j=0}^k \left( 2^{-j} G_0 + As_j^{-\alpha} \left( K+H_0 \right) + L \left(A s_j^\gamma+ BR^\gamma s_j^{-\alpha}  \right) \right) \right) \\
& \leq 2^{-(k+1)} G_0 + s_{k+1}^{-\alpha} (K+H_0) \left(\frac12 A + CAs_k^{-\alpha} +C  \right) \\ 
& \qquad + Ls_{k+1}^\gamma \left(  \frac12\theta^{-\gamma}A+C \right)  + L R^\gamma s_{k+1}^{-\alpha}  \left(   \frac12 B + CA + CBs_k^{-\alpha} \right).
\end{align*}
Now suppose in addition that $k\leq n$ with $n$ such that $Cs_n^{-\alpha} \leq \frac14$. Then using this and $\theta^{\gamma} \geq \frac23$, we may select $A$ large enough so that 
\begin{equation*} \label{}
\frac12 A + C A s_k^{-\alpha} + C\leq \frac34 A + C \leq A \quad \mbox{and} \quad \frac12 \theta^{-\gamma} A + C \leq A,
\end{equation*}
and then select $B$ large enough so that 
\begin{equation*} \label{}
\frac12 B + CA + CBs_k^{-\alpha}  \leq B.
\end{equation*}
We obtain
\begin{equation*} \label{}
G_{k+1} \leq 2^{-(k+1)} G_0 + As_{k+1}^{-\alpha} \left( K+H_0 \right) + ALs_{k+1}^\gamma + BLR^\gamma s_{k+1}^{-\alpha}.
\end{equation*}
By induction, we now obtain~\eqref{e.GjdiffmaxQj} for every $j \leq n\leq m$. In addition, for every $j \in \{ n+1,\ldots,m\}$, we have that $1 \leq s_j / s_m \leq C$. This yields~\eqref{e.GjdiffmaxQj} for every $j\in \{ 0,\ldots,n\}$.

\smallskip
%

\smallskip

\emph{Step 3.} The bound on $H_j$ and conclusion.
By~\eqref{e.Qkbound},~\eqref{e.GjdiffmaxQj}, we have
\begin{align*} \label{}
|Q_j| &  \leq C\left(H_0 + \sum_{i=0}^j G_i\right)   \\
& \leq C\left(H_0 + \sum_{i=0}^j \left( 2^{-i} G_0 + Cs_i^{-\alpha} (K+H_0) + CL(s_i^\gamma+R^\gamma s_i^{-\alpha} \right) \right)\\
& \leq C\left(H_0 + G_0 + Cs_j^{-\alpha} (K+H_0) + C LR^\gamma\left(1+s_j^{-\alpha} \right) \right)\\
& \leq CH_0 + CKs_j^{-\alpha} + CLR^\gamma. 
\end{align*}
Here we also used that $s_j^{-\alpha} \leq s_m^{-\alpha} \leq C$. Using the previous inequality,~\eqref{e.GjHj} and~\eqref{e.GjdiffmaxQj}, we conclude that 
\begin{equation*} \label{}
H_j \leq G_j + \frac{1}{2}|Q_j| \leq C H_0 + CKs_j^{-\alpha}+ CLR^\gamma.
\end{equation*}
This is~\eqref{e.C11lem2}. Note that~\eqref{e.GjdiffmaxQj} already implies~\eqref{e.C11lem1} for $\beta:=(\log 2) / |\log \theta|$.
\end{proof}

Next, we recall that solutions of the homogenized equation (which are essentially harmonic functions) satisfy the ``improvement of quadratic approximation'' property.

\begin{lemma}
\label{l.checkdyad}
Let $r>0$. Assume that $\overline{A}$ satisfies~\eqref{e.Fellip} and let $w\in C(B_r)$ be a solution of 
\begin{equation} 
\label{e.Ahom}
-\tr \left(\overline{A}D^2w\right)= 0 \quad \mbox{in} \ B_r.
\end{equation}
There exists~$\theta(d,\lambda,\Lambda) \in (0,\tfrac12]$ such that, for every $r>0$,
\begin{equation*} \label{}
\frac{1}{(\theta r)^2} \inf_{q\in\mathcal Q} \sup_{x\in B_{\theta r}} \left| w(x) - q(x) \right| \leq \frac12 \left( \frac{1}{ r^2} \inf_{q\in\mathcal Q} \sup_{x\in B_{r}} \left| w(x) - q(x) \right| \right).
\end{equation*}

\end{lemma}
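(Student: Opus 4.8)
The plan is to reduce Lemma~\ref{l.checkdyad} to a standard interior regularity estimate for the constant-coefficient operator $-\tr(\bar A D^2 \cdot)$, which is equivalent to the Laplacian after a linear change of variables. First I would observe that the inequality is scale-invariant: by the rescaling $w(x) \mapsto w(rx)$, and using that $\mathcal{Q}$ is preserved under this rescaling (it sends a polynomial of degree $\leq 2$ to another one, and rescales the seminorm $r^{-2}\inf_{q}\sup_{B_r}|w-q|$ correctly), it suffices to prove the estimate for $r=1$. So the goal becomes: find $\theta(d,\lambda,\Lambda)\in(0,\tfrac12]$ such that every solution $w$ of $-\tr(\bar A D^2 w)=0$ in $B_1$ satisfies
\begin{equation*}
\theta^{-2}\inf_{q\in\mathcal Q}\sup_{B_\theta}|w-q| \leq \tfrac12\,\inf_{q\in\mathcal Q}\sup_{B_1}|w-q|.
\end{equation*}

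Next I would pick the optimal quadratic polynomial $q^*$ at scale $1$, so that $\sup_{B_1}|w-q^*| = \inf_{q\in\mathcal Q}\sup_{B_1}|w-q| =: E$. The function $h := w - q^*$ satisfies $-\tr(\bar A D^2 h) = -\tr(\bar A D^2 q^*) =: c_0$, a constant, so $h$ minus a fixed quadratic correction $p_0$ (any quadratic with $-\tr(\bar A D^2 p_0)=c_0$) is $\bar A$-harmonic; alternatively, and more cleanly, I would instead take $q^*$ to already absorb this by noting $-\tr(\bar A D^2 w)=0$ means the competitor set can always be shifted. The point is that $\tilde h := h - p_0$ solves $-\tr(\bar A D^2 \tilde h)=0$ in $B_1$ with $\sup_{B_1}|\tilde h| \leq E + |p_0|$-type bound — but to keep this clean it is better to argue directly: since $w$ itself solves $-\tr(\bar A D^2 w)=0$, and $q^*$ can be chosen among quadratics, I can instead just use that $g := w - \ell^*$ for the optimal \emph{affine} approximant still solves the equation; however the cleanest route is the Taylor-expansion one. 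Concretely: interior Schauder (or just $C^3$) estimates for $-\tr(\bar A D^2\cdot)$ — which hold because this operator is, up to the linear change $y = \bar A^{1/2} x$, the Laplacian, and harmonic functions are analytic with interior derivative bounds — give that $w-q^*$, which is a solution with an additional constant on the right-hand side, has
\begin{equation*}
\sup_{B_{1/2}}|D^3 w| \leq C(d,\lambda,\Lambda)\,\sup_{B_1}|w - q^*| = C E,
\end{equation*}
using that $D^3(w-q^*)=D^3 w$ and that the constant right-hand side $-\tr(\bar A D^2 w)=0$ plays no role beyond the change of variables (the eigenvalue bounds $\lambda,\Lambda$ control the distortion of the change of variables). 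Then Taylor's theorem at the origin gives a quadratic polynomial $q_0$ (the second-order Taylor polynomial of $w$) with $\sup_{B_\rho}|w - q_0| \leq \tfrac16 \sup_{B_{1/2}}|D^3 w|\,\rho^3 \leq \tfrac{C}{6}E\rho^3$ for $\rho \leq \tfrac12$.

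Therefore $\rho^{-2}\inf_{q\in\mathcal Q}\sup_{B_\rho}|w-q| \leq \tfrac{C}{6}E\rho$, and choosing $\theta := \min\{\tfrac12, 3/C\}$ makes the right-hand side at most $\tfrac12 E$, which is exactly the claim. I would then undo the rescaling to recover the statement for general $r>0$. The main (and really only) technical point to get right is tracking the dependence of the constant $C$ on $\lambda,\Lambda$ through the change of variables $y=\bar A^{1/2}x$: the ellipticity bounds $\lambda Id \leq \bar A \leq \Lambda Id$ ensure $\bar A^{1/2}$ and its inverse are bounded by $\Lambda^{1/2}$ and $\lambda^{-1/2}$, so the images of balls under this map are sandwiched between balls of comparable radii, and the $C^3$ bound for harmonic functions transfers with a constant depending only on $d,\lambda,\Lambda$. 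Since $\theta$ is then a dimensional quantity times a function of $\lambda,\Lambda$, it has the claimed dependence $\theta(d,\lambda,\Lambda)$. There is no serious obstacle here — this is the classical "improvement of flatness / Campanato-type" iteration step underlying Schauder theory, and the only care needed is bookkeeping.
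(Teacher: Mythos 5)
Your approach is essentially the one the paper invokes (the paper's own proof is a one-line appeal to ``solutions are harmonic after a linear change of coordinates, so the result is classical''), and the rescaling, change of variables, interior derivative bound, and Taylor expansion are all the right ingredients. But there is a small gap at the crux: the inequality
\[
\sup_{B_{1/2}}|D^3 w| \leq C(d,\lambda,\Lambda)\,\sup_{B_1}|w-q^*|
\]
is not a direct consequence of interior Schauder estimates. Schauder gives $\sup_{B_{1/2}}|D^3 v|\leq C\bigl(\sup_{B_1}|v|+|c_0|\bigr)$ for $v := w-q^*$ solving $-\tr(\bar A D^2 v)=c_0$ with $c_0 := -\tr(\bar A D^2 q^*)$ constant, and the $|c_0|$ on the right must still be controlled. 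Equivalently, if you subtract an $\bar A$-harmonic quadratic from $w$ to make $D^3$ estimates applicable, you need to know that the best $\bar A$-harmonic quadratic approximation of $w$ on $B_1$ is comparable to the best unrestricted quadratic approximation $E$ --- you gesture at this (``the competitor set can always be shifted'') but do not justify it.

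The fix is short but genuinely needed: one must show $|c_0|\leq C(d,\Lambda)E$. For example, set $\psi(x):=\tfrac{c_0}{2\tr\bar A}\bigl(1-|x|^2\bigr)$, so $-\tr(\bar A D^2\psi)=c_0$, $\psi\equiv 0$ on $\partial B_1$, and $v-\psi$ is $\bar A$-harmonic. The maximum principle gives $|v(0)-\psi(0)|\leq\sup_{\partial B_1}|v|\leq E$, and since $|v(0)|\leq E$ and $\psi(0)=\tfrac{c_0}{2\tr\bar A}$ this yields $|c_0|\leq 4\,d\Lambda\,E$. (Alternatively, after the change of variables, compare $\fint_{B_1}v$ with $v(0)$ via the mean-value property to bound the Laplacian of $q^*$.) Once this is in hand, replacing $q^*$ by $q^*+\tfrac{c_0}{2\tr\bar A}|x|^2$ produces an $\bar A$-harmonic quadratic $q_0$ with $\sup_{B_1}|w-q_0|\leq CE$, and the rest of your argument --- interior derivative bound for the $\bar A$-harmonic function $w-q_0$, then Taylor's theorem and the choice $\theta=\min\{\tfrac12,\,c(d,\lambda,\Lambda)\}$ --- goes through as written.
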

\begin{proof}
Since $\overline A$ is constant, the solutions of~\eqref{e.Ahom} are harmonic (up to a linear change of coordinates). Thus the result of this lemma is classical.
\end{proof}
 
Equipped with the above lemmas, we now give the proof of Theorem~\ref{t.regularity}.

\begin{proof}[{Proof of Theorem~\ref{t.regularity}}]
Fix $s\in (0,d)$. We denote by $C$ and $c$ positive constants depending only on $(s,\sigma,d,\lambda,\Lambda,\ell)$ which may vary in each occurrence. We proceed with the proof of~\eqref{e.pwC11}. Let $\mathcal Y$ be the random variable in the statement of Proposition~\ref{p.subopt}, with $\alpha$ the exponent there. Define $\X:= \Y^{1/s}$ and observe that $\X$ satisfies~\eqref{e.scrbound}. We take $\sigma$ to be the smallest of the following: the exponent in~\eqref{e.classicKS} and half the exponent $\alpha$ in Proposition~\ref{p.subopt}.

\smallskip

We may suppose without loss of generality that $-\tr\left(\overline{A}M\right) = f(0) = 0$.

\smallskip

\emph{Step 1.} We check that~$u$ satisfies the hypothesis of Proposition~\ref{p.quadapprox} with $r_0= C\X$. Fix $r\in [ C\X,R/2]$. We take $v,w\in C(B_{3r/4})$ to be the solutions of the problems 
\begin{equation*} \label{}
\left\{ 
\begin{aligned}
& -\tr\left(\overline{A} D^2 v\right)= f(x)& \mbox{in} & \ B_{3r/4}, \\
& v = u & \mbox{on} & \ \partial B_{3r/4},
\end{aligned} 
\right. \qquad \left\{ 
\begin{aligned}
& -\tr \left(\overline{A} D^2w\right) = 0 & \mbox{in} & \ B_{{3r/4}}, \\
& w = u & \mbox{on} & \ \partial B_{3r/4}.
\end{aligned} 
\right.
\end{equation*}
By the Alexandrov-Bakelman-Pucci estimate \cite{CC}, we have
\begin{equation} \label{e.tregabp}
\frac1{r^2}\sup_{B_{r/2}} | v-w | \leq C \left( \fint_{B_r} \left| f(x) \right|^d \, dx \right)^{\frac1d} \leq C r^{\sigma} \left[ f \right]_{C^{0,\sigma}(B_r)}.
\end{equation}
By the Krylov-Safanov H\"older estimate \eqref{e.classicKS},
\begin{multline} \label{e.tregKSapp}
r^{\sigma-2}\left[ u \right]_{C^{0,\sigma}(B_{3r/4}) } \leq C \left( \frac{1}{r^2} \osc_{B_r} u + \left( \fint_{B_r} \left| f(x) \right|^d \, dx \right)^{\frac1d} \right) \\
\leq C \left( \frac{1}{r^2} \osc_{B_r} u + r^{\sigma} \left[ f \right]_{C^{0,\sigma}(B_r)} \right).
\end{multline}
By the error estimate (Proposition~\ref{p.subopt}), we have 
\begin{multline*} \label{}
\frac1{r^2}\sup_{B_{r/2}} | u-v | \\
\leq Cr^{-\alpha} \left( 1 + \Y r^{-s} \right) \left( r^\sigma \left[ f \right]_{C^{0,\sigma}(B_r)} + \frac 1{r^2} \osc_{\partial B_{3r/4}} u  +  r^{\sigma-2}\left[ u \right]_{C^{0,\sigma}(B_{3r/4}) }\right).
\end{multline*}
Using the assumption $r^s \geq \X^s  = \Y$ and~\eqref{e.tregKSapp}, this gives 
\begin{equation} \label{e.tregee}
\frac1{r^2}\sup_{B_{r/2}} | u-v | \leq Cr^{-\alpha} \left(  r^\sigma \left[ f \right]_{C^{0,\sigma}(B_r)} + \frac 1{r^2} \osc_{B_r} u \right).
\end{equation}
Using~\eqref{e.tregabp} and~\eqref{e.tregee}, the triangle inequality, and the definition of $\sigma$, we get
\begin{equation*} \label{}
\frac1{r^2}\sup_{B_{r/2}} | u-w | \leq Cr^{-\alpha} \left(  \frac 1{r^2} \osc_{B_r} u \right) + Cr^\sigma \left[ f \right]_{C^{0,\sigma}(B_R)}.
\end{equation*}
By Lemma~\ref{l.checkdyad}, $w\in \mathcal A(r,\theta)$ for some $\theta\geq c$.

\smallskip

\emph{Step 2.} We apply Proposition~\ref{p.quadapprox} to obtain~\eqref{e.pwC11}. The proposition gives, for every $r\geq r_0 = C\X$,
\begin{equation*} \label{}
\frac{1}{r^2} \inf_{l\in\L} \sup_{x\in B_{r}} \left| u(x) - l(x) \right| \leq C\left(R^\sigma \left[ f \right]_{C^{0,\sigma}(B_R)} +\frac{1}{R^2} \inf_{l\in\L} \sup_{x\in B_r} \left| u(x) - l(x) \right| \right), 
\end{equation*}
which is \eqref{e.pwC11}. 
\end{proof}

It is convenient to restate the estimate~\eqref{e.pwC11} in terms of ``coarsened" seminorms. Recall that, for $\phi \in C^\infty(B_1)$,
\begin{equation*}
\begin{aligned} \label{}
\left|D\phi(x_0) \right| & \simeq \frac{1}{h} \osc_{B_h(x_0)} \phi(x),  \\
\left|D^2\phi(x_0) \right| & \simeq \frac{1}{h^2}\inf_{p\in \Rd} \osc_{B_h(x_0)} \left( \phi(x) - p\cdot x \right),
\end{aligned} \qquad \mbox{for} \quad 0<h \ll 1.
\end{equation*}
This motivates the following definitions: for $\alpha\in (0,1]$, $h\geq 0$, $U\subseteq \Rd$ and $x_0\in U$, we define the \emph{pointwise, coarsened $h$-scale $C^{0,1}_{h}(U)$ and $C^{1,1}_{h}(U)$ seminorms at $x_0$} by
\begin{equation*} \label{}
\left[ \phi \right]_{C^{0,\alpha}_h(x_0,U)} := \sup_{r > h} \frac1{r^\alpha} \osc_{B_r(x_0) \cap U}  \phi,
\end{equation*}
and
\begin{equation*} \label{}
\left[ \phi \right]_{C^{1,\alpha}_h(x_0,U)} := \sup_{r > h} \frac1{r^{1+\alpha}} \inf_{l\in \mathcal{L}} \osc_{B_r(x_0) \cap U} \left( \phi(x) - l(x) \right).
\end{equation*}
This allows us to write \eqref{e.pwC11} in the form
\begin{multline}\label{e.pwcC11}
\left[ u \right]_{C^{1,1}_1(0,B_{R/2})} \\
\leq  C\X^{2}\left( \left|f(0)+\tr(\overline{A}M)\right| + R^{\sigma}\! \left[ f \right]_{C^{0,\sigma}(B_R)} + \frac{1}{R^2} \inf_{l\in\L} \sup_{x\in B_R} |u-l| \right).
\end{multline}

As a simple corollary to Theorem \ref{t.regularity}, we also have $C^{0,1}_{1}$ bounds on $u$:
\begin{corollary}\label{c.C01reg}
Assume the hypotheses and notation of Theorem \ref{t.regularity}. Then,
\begin{equation} \label{e.pwC01}
\left[ u \right]_{C^{0,1}_1(0,B_{R/2})} \leq  \X \left( R \left|f(0)+\tr \overline{A}M\right| + R^{1+\sigma} \left[ f \right]_{C^{0,\sigma}(B_R)} + \frac1R\osc_{B_R}u  \right).
\end{equation}
\end{corollary}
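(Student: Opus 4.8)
The plan is to derive the $C^{0,1}_1$ bound from the $C^{1,1}$ estimate \eqref{e.pwC11} of Theorem~\ref{t.regularity} by a telescoping argument over dyadic scales, being careful to invoke \eqref{e.pwC11} itself rather than its coarsened reformulation \eqref{e.pwcC11}: passing through \eqref{e.pwcC11} would cost an extra factor of $\X^2$, whereas using \eqref{e.pwC11} directly loses only a single power of $\X$, and only from the microscopic scales $r\le\X$ that the theorem does not reach. After replacing $\X$ by $\X\vee 1$ (harmless for \eqref{e.scrbound}) we may assume $\X\ge 1$. Set $K:=C\bigl(|f(0)+\tr(\overline A M)| + R^{\sigma}[f]_{C^{0,\sigma}(B_R)} + R^{-2}\osc_{B_R}u\bigr)$ for a suitable dimensional constant; combining \eqref{e.pwC11} with the trivial inequality $\inf_{l\in\L}\sup_{B_R}|u-l|\le\tfrac12\osc_{B_R}u$, we obtain for each dyadic scale $\rho=2^{-j}R/2\in[\X,R/2]$ an affine function $l_\rho(x)=a_\rho+b_\rho\cdot x$ with $\osc_{B_\rho}(u-l_\rho)\le K\rho^2$.

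First I would telescope the slopes $b_\rho$ downward from the top scale. For consecutive dyadic scales $\rho,2\rho$ in $[\X,R/2]$, the triangle inequality on $B_\rho$ gives $\osc_{B_\rho}(l_\rho-l_{2\rho})\le\osc_{B_\rho}(u-l_\rho)+\osc_{B_{2\rho}}(u-l_{2\rho})\le 5K\rho^2$, and since $l_\rho-l_{2\rho}$ is affine this forces $|b_\rho-b_{2\rho}|\le CK\rho$. Summing the resulting geometric series down from $\rho=R/2$, together with the crude bound $|b_{R/2}|\le R^{-1}\osc_{B_{R/2}}(u-l_{R/2})+R^{-1}\osc_{B_R}u\le CKR+R^{-1}\osc_{B_R}u$, yields $|b_\rho|\le R^{-1}\osc_{B_R}u+CKR$ for every dyadic $\rho\in[\X,R/2]$. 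Hence $\osc_{B_\rho}u\le\osc_{B_\rho}l_\rho+\osc_{B_\rho}(u-l_\rho)\le 2\rho|b_\rho|+K\rho^2$, so $\rho^{-1}\osc_{B_\rho}u\le 2R^{-1}\osc_{B_R}u+CKR$ for all such $\rho$; since for $\rho_{j+1}\le r\le\rho_j=2\rho_{j+1}$ one has $r^{-1}\osc_{B_r}u\le 2\rho_j^{-1}\osc_{B_{\rho_j}}u$, the same estimate (up to a factor $2$) holds for all $r\in[\X,R/2]$, and also for $r\ge R/2$, where $B_r(0)\cap B_{R/2}=B_{R/2}$.

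Finally I would treat the microscopic range $r\in[1,\X]$, not covered by Theorem~\ref{t.regularity}: using $B_r(0)\subseteq B_\X(0)$ and $r\ge 1$, together with the previous step applied at scale $\X$, $r^{-1}\osc_{B_r}u\le\osc_{B_\X}u\le\X\bigl(2R^{-1}\osc_{B_R}u+CKR\bigr)$. This is the only step at which a genuine power of $\X$ is spent. Taking the supremum over $r>1$ and substituting $KR=C\bigl(R|f(0)+\tr(\overline A M)|+R^{1+\sigma}[f]_{C^{0,\sigma}(B_R)}+R^{-1}\osc_{B_R}u\bigr)$ gives \eqref{e.pwC01}. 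The only point requiring real care is the slope telescoping, together with the bookkeeping that keeps the stochastic prefactor at $\X$ rather than $\X^2$; everything else is the triangle inequality and monotonicity of $\osc$ in the radius.
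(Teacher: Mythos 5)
Your proof is correct but takes a genuinely different route from the paper. The paper's own argument proves a separate interpolation inequality (Lemma~\ref{l.interp}), namely $\left[u\right]_{C^{0,1}_h}\lesssim \left[u\right]^{1/2}_{C^{1,1}_h}\left(\osc u\right)^{1/2}$, whose proof itself runs a dyadic iteration of oscillations; it then feeds in the coarsened $C^{1,1}$ bound~\eqref{e.pwcC11} (carrying the prefactor $\X^2$) and Young's inequality, so that the square root brings the prefactor down to $\X$. You instead telescope the slopes $b_\rho$ of the optimal affine approximants across dyadic scales $\rho\in[\X,R/2]$ directly from~\eqref{e.pwC11}, and spend a power of $\X$ only on the microscopic range $r\in[1,\X]$ that the theorem does not reach. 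Both routes yield the same $\X$ prefactor in~\eqref{e.pwC01}. Your argument is self-contained (no separate interpolation lemma needed) and localizes the source of the stochastic factor more transparently; the paper's version is more modular, extracting a reusable interpolation estimate. One small caveat on your framing: you suggest that passing through~\eqref{e.pwcC11} would necessarily cost $\X^2$, but the paper does pass through~\eqref{e.pwcC11} and still lands at $\X$ because the interpolation inequality takes a square root of the $C^{1,1}$ seminorm. The mechanics of your argument — the bound $\osc_{B_\rho}(l_\rho-l_{2\rho})\le C K\rho^2$ forcing $|b_\rho-b_{2\rho}|\lesssim K\rho$, summing the geometric series, the crude top-scale bound on $|b_{R/2}|$, and then $\osc_{B_\rho}u\le 2\rho|b_\rho|+CK\rho^2$ — are all sound, and the passage from dyadic $\rho$ to general $r$ and the treatment of $r\ge R/2$ and $r\in[1,\X]$ are handled correctly under the standing hypothesis $R\ge 2\X$.
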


The proof follows from a simple interpolation inequality, which controls the seminorm $\left[ \cdot \right]_{ C^{0,1}_{h}(B_R)}$ in terms of $\left[ \cdot \right]_{C^{1,1}_{h}(B_R)}$ and the oscillation in~$B_R$:
\begin{lemma}
\label{l.interp}
For any $R>0$ and $\phi \in C(B_R)$, we have
\begin{equation} \label{e.interp}
\left[ \phi \right]_{C^{0,1}_h(0,B_R)} \leq 14 \left(\left[ \phi \right]_{C^{1,1}_h(0,B_R)}\right)^{\frac12} \left(\osc_{B_R} \phi\right)^{\frac12}. 
\end{equation}
\end{lemma}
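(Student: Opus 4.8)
The plan is to prove the interpolation inequality \eqref{e.interp} by a direct comparison argument at each scale, balancing the first-order and second-order information. Fix $x_0 = 0$ and write $N_1 := \left[ \phi \right]_{C^{1,1}_h(0,B_R)}$ and $N_0 := \osc_{B_R} \phi$. The goal is to bound, for each fixed $r > h$, the quantity $r^{-1} \osc_{B_r} \phi$ by $C(N_1 N_0)^{1/2}$, with $C$ slightly better than $14$. First I would observe that if $r \geq (N_0/N_1)^{1/2}$ (call this threshold $\rho$), then $r^{-1}\osc_{B_r}\phi \leq r^{-1} N_0 \leq \rho^{-1} N_0 = (N_1 N_0)^{1/2}$ directly, so only the regime $h < r < \rho$ needs work.

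For $r$ in that small-scale regime, the idea is that on $B_r$ the function is close to an affine function $l$ (by the $C^{1,1}_h$ bound applied at scale $\rho$, or at some intermediate scale), and one controls the slope of that affine function using the global oscillation bound. Concretely, pick $l \in \mathcal L$ achieving (up to a factor $2$, say) the infimum in the definition of $\left[\phi\right]_{C^{1,1}_h(0,B_R)}$ at a conveniently chosen scale $s \in (r, R]$; then $\osc_{B_s}(\phi - l) \leq N_1 s^2$, and comparing this with $\osc_{B_s}\phi \leq N_0$ via the triangle inequality forces the linear part $l - l(0)$ of $l$ to have oscillation on $B_s$ at most $N_0 + N_1 s^2$, i.e. $|Dl| \lesssim N_0/s + N_1 s$. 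Then on the smaller ball $B_r$ we estimate $\osc_{B_r}\phi \leq \osc_{B_r}(\phi - l) + \osc_{B_r} l \leq N_1 r^2 + |Dl|\, 2r \lesssim N_1 r^2 + r N_0/s + r N_1 s$. Dividing by $r$ gives $r^{-1}\osc_{B_r}\phi \lesssim N_1 r + N_0/s + N_1 s$, and optimizing the choice $s = (N_0/N_1)^{1/2} = \rho$ (which is legitimate as long as $\rho \le R$; the case $\rho > R$ is even easier since then $N_0/R \le N_1 R$ and one just uses $s = R$) yields $r^{-1}\osc_{B_r}\phi \lesssim N_1 r + (N_1 N_0)^{1/2} \lesssim (N_1 N_0)^{1/2}$ because $r < \rho$. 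Taking the supremum over $r > h$ finishes the proof.

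The main obstacle — really the only delicate point — is bookkeeping the explicit constant down to $14$: one must be careful with the factor $2$ losses coming from (i) replacing the infimum over $l$ by a near-minimizer, (ii) the triangle inequalities, (iii) $\osc_{B_r} l \leq 2r|Dl|$, and (iv) the AM–GM step $N_1 r + N_0/\rho + N_1\rho = N_1 r + 2(N_1N_0)^{1/2}$. A clean way to keep the constant small is to take $l$ to be the *exact* minimizer at scale $s$ (the infimum is attained since $B_s$ is compact and the class $\mathcal L$ is finite-dimensional, and $\phi$ is continuous), and to choose $s = \rho$ directly rather than at an intermediate scale, so that the only genuine losses are the triangle inequality and AM–GM; this should comfortably land at or below $14$. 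No new input beyond elementary estimates is needed — in particular no regularity or homogenization — since \eqref{e.interp} is a purely deterministic statement about continuous functions.
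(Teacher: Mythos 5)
Your argument has a genuine gap at the step
\[
\osc_{B_r}\phi \leq \osc_{B_r}(\phi - l) + \osc_{B_r} l \leq N_1 r^2 + 2r|Dl|.
\]
The affine function $l$ was chosen to (nearly) attain the infimum in the $C^{1,1}_h$ seminorm at the \emph{larger} scale $s$, so the only inequality you are entitled to is $\osc_{B_r}(\phi - l)\leq \osc_{B_s}(\phi - l)\leq N_1 s^2$. The bound $\osc_{B_r}(\phi - l)\leq N_1 r^2$ would hold only if $l$ were near-optimal at scale $r$, which is a \emph{different} affine function $l_r$. Substituting the bound you actually have, with $s=\rho:=(N_0/N_1)^{1/2}$, gives
\[
r^{-1}\osc_{B_r}\phi \;\leq\; N_1 s^2/r + 2|Dl| \;\leq\; N_0/r + C(N_1N_0)^{1/2},
\]
and the term $N_0/r$ is useless for $r\ll\rho$; it is precisely the trivial bound you were trying to improve. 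If instead you use the genuinely optimal $l_r$ at scale $r$ in the first term, you then need a bound on $|Dl_r|$, not $|Dl_s|$, and the difference $|Dl_r-Dl_s|$ is controlled only by $2r|Dl_r-Dl_s|=\osc_{B_r}(l_r-l_s)\leq \osc_{B_r}(\phi-l_r)+\osc_{B_r}(\phi-l_s)\leq N_1 r^2 + N_1 s^2$, so $|Dl_r-Dl_s|\lesssim N_1 s^2/r$, which again blows up as $r\to 0$.

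The point is that the slope of the optimal affine approximant can drift substantially as the scale shrinks, and a one-shot comparison between two widely separated scales cannot capture the accumulated drift. This is exactly why the paper does not argue in one shot but instead proves a single-step inequality $\tfrac{2}{s}\osc_{B_{s/2}}\phi\leq 3sN_1 + \tfrac1s\osc_{B_s}\phi$ (by comparing the optimal $p$ at scale $s$ to its restriction to $B_{s/2}$) and then \emph{iterates} it dyadically from $s=\rho$ down to $s\approx h$; the geometric sum $\sum_{j} 2^{-j}\rho\, N_1 \simeq \rho N_1 = (N_1N_0)^{1/2}$ is what quantifies the total slope drift. Your outer framework --- split at the threshold $\rho$, use the trivial bound above $\rho$, balance the two contributions --- is the same as the paper's; the dyadic iteration, which your sketch omits, is the essential content of the argument and cannot be replaced by a single application of the $C^{1,1}_h$ bound at one auxiliary scale.
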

\begin{proof}

We must show that, for every $s \in [h,R]$, 
\begin{equation} \label{e.verinterp}
\frac1s \osc_{B_s} \phi \leq 14 \left(\left[ \phi \right]_{C^{1,1}_h(0,B_R)}\right)^{\frac12} \left(\osc_{B_R} \phi\right)^{\frac12}.
\end{equation}
Set $$K:= \left(\left[ \phi \right]_{C^{1,1}_h(0,B_R)}\right)^{-\frac12} \left(\osc_{B_R} \phi\right)^{\frac12}$$
and observe that, for every $s\in[ K , R]$, we have
\begin{equation} \label{e.easyste}
\frac1s \osc_{B_s} \phi \leq K^{-1} \osc_{B_R} \phi =\left( \left[ \phi \right]_{C^{1,1}_h(0,B_R)}\right)^{\frac12} \left(\osc_{B_R} \phi\right)^{\frac12}.
\end{equation}
Thus we need only check~\eqref{e.verinterp} for $s\in [h,K]$.

\smallskip

We next claim that, for every $s\in [h,R)$, 
\begin{equation} \label{e.iters}
\frac{2}{s} \osc_{B_{s/2}} \phi \leq 3s \left[ \phi \right]_{C^{1,1}_h(0,B_R)} + \frac{1}{s} \osc_{B_s} \phi. 
\end{equation}
Fix $s$ and select $p\in\Rd$ such that 
\begin{equation*} \label{}
\frac{1}{s^2} \osc_{ B_s} \left( \phi(y) - p\cdot y \right) \leq \left[ \phi \right]_{C^{1,1}_h(0,B_R)}.
\end{equation*}
Then
\begin{equation*} \label{}
|p| = \frac{1}{2s} \osc_{ B_s} \left( -p\cdot y \right) \leq \frac1{2s} \osc_{ B_s} \left( \phi(y) - p\cdot y \right) + \frac1{2s} \osc_{B_s} \phi.
\end{equation*}
Together these yield
\begin{align*}
\frac{2}{s} \osc_{B_{s/2}} \phi & \leq \frac{2}{s} \osc_{ B_{s/2}} \left( \phi(y) - p\cdot y \right) + \frac{2}{s} \osc_{B_{s/2}} \left( -p\cdot y \right) \\
& = \frac{2}{s} \osc_{ B_{s/2}} \left( \phi(y) - p\cdot y \right) + 2|p| \\
& \leq \frac{3}{s} \osc_{ B_{s}} \left( \phi(y) - p\cdot y \right) + \frac1{s} \osc_{B_s} \phi \\
& \leq 3s \left[ \phi \right]_{C^{1,1}_h(0,B_R)} + \frac{1}{s} \osc_{B_s} \phi.
\end{align*}
This is~\eqref{e.iters}.

We now iterate~\eqref{e.iters} to obtain the conclusion for $s\in [h,K]$. By induction, we see that for each $j\in\N$ with $R_j := 2^{-j} K \geq h$,
\begin{align*} \label{}
R_{j}^{-1} \osc_{B_{R_{j}}} \phi & \leq K^{-1} \osc_{B_K} \phi + 3 \left( \sum_{i=0}^{j-1} R_i \right)  \left[ \phi \right]_{C^{1,1}_h(0,B_R)} \\
& \leq K^{-1} \osc_{B_K} \phi + 6K \left[ \phi \right]_{C^{1,1}_h(0,B_R)}.
\end{align*}
Using~\eqref{e.easyste}, we deduce that for each $j\in\N$ with $R_j := 2^{-j} K \geq h$
\begin{equation*} \label{}
R_{j}^{-1} \osc_{B_{R_{j}}} \phi  \leq 7 \left( \left[ \phi \right]_{C^{1,1}_h(0,B_R)}\right)^{\frac12} \left(\osc_{B_R} \phi\right)^{\frac12}.
\end{equation*}
For general $s\in [h,R)$ we may find $j\in \N$ such that $R_{j+1} \leq s < R_j$ to get
\begin{equation*} \label{}
s^{-1} \osc_{B_s} \phi \leq R_{j+1}^{-1} \osc_{B_{R_j}} \phi \leq 2 R_{j}^{-1} \osc_{B_{R_{j}}} \phi \leq 14 \left(\left[ \phi \right]_{C^{1,1}_h(0,B_R)}\right)^{\frac12} \left(\osc_{B_R} \phi\right)^{\frac12}. \qedhere
\end{equation*}
\end{proof}

Equipped with this lemma, we now present the simple proof of Corollary \ref{c.C01reg}:
\begin{proof}[Proof of Corollary \ref{c.C01reg}]
By interpolation, we also obtain \eqref{e.pwC01}. This follows from~\eqref{e.pwC11} and~Lemma~\ref{l.interp} as follows:
\begin{align*}
\left[ u \right]_{C^{1,1}_h(0,B_R)} & \leq C \left[ u \right]^{\frac12}_{C^{1,1}_h(0,B_R)} \left(\osc_{B_R} u \right)^{\frac12} \\
& \leq C\X \left( K_0 + |f(0)| + R^{\sigma} \left[ f \right]_{C^{0,\sigma}(B_R)} + R^{-2} \osc_{B_R} u \right)^{\frac12}  \left(\osc_{B_R} u \right)^{\frac12}\\
& \leq C\X \left( K_0R +R |f(0)| + R^{1+\sigma} \left[ f \right]_{C^{0,\sigma}(B_R)} + R^{-1} \osc_{B_R} u \right),
\end{align*}
where we used~\eqref{e.interp} in the first line,~\eqref{e.pwC11} to get the second line and Young's inequality in the last line. Redefining $\X$ to absorb the constant $C$, we obtain~\eqref{e.pwC01}.
\end{proof}

\section{Green's function estimates}
\label{s.green}

We will now use a similar argument to the proof of Theorem \ref{t.regularity} to obtain estimates on the modified Green's functions $G_{\ep}(\cdot, 0)$ which are given by the solutions of:
\begin{equation}\label{e.greens2}
\ep^2 G_\ep -\tr\left(A (x) D^2G_\ep \right) = \chi_{B_\ell} \quad \mbox{in} \ \Rd.
\end{equation}

\begin{proposition}
\label{p.Green}
Fix $s\in (0,d)$. There exist $a(d,\lambda,\Lambda)>0$, $\delta(d,\lambda,\Lambda)>0$ and an $\F$--measurable random variable $\X:\Omega\to [1,\infty)$ satisfying
\begin{equation} \label{e.Green3dC}
\E \left[ \exp\left( \X^{s}  \right) \right] \leq C(s,d,\lambda,\Lambda,\ell) < \infty
\end{equation}
such that, for every $\ep \in (0,1]$ and $x\in \Rd$,
\begin{equation} \label{e.Greenest}
G_\ep(x,0) \leq \X^{d-1-\delta}\xi_{\ep}(x)
\end{equation}
where $\xi_{\ep}(x)$ is defined by:
\begin{equation}\label{e.dimsep}
\xi_{\ep}(x):= \exp\left( -a \ep|x| \right) \cdot \left\{ \begin{aligned} 
& \log\left( 2 + \frac{1}{\ep(1+|x|)} \right), & \mbox{in} \ d=2,\\
&  \left( 1 +  |x| \right)^{2-d}, & \mbox{in} \ d>2,
\end{aligned}\right.
\end{equation}

and
\begin{equation} \label{e.Greenoscest}
\osc_{B_1(x)} G_\ep(\cdot,0) \leq (T_x\X) \X^{d-1-\delta} \left( 1+|x| \right)^{1-d} \exp\left( -a \ep |x| \right).
\end{equation}
\end{proposition}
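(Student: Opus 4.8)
\emph{Strategy.} The proof follows the template of \tref{regularity}: on each dyadic scale between the microscopic scale and $\ep^{-1}$ we approximate $G_\ep$ by the solution of the \emph{homogenized} problem, whose behaviour is explicit, and then transfer this information back to $G_\ep$ by means of the suboptimal homogenization estimate \pref{subopt}. Two regimes must be distinguished. For $|x|\gtrsim\ep^{-1}|\log\ep|$ the deterministic bound $G_\ep(x,0)\le C\ep^{-2}\exp(-a\ep|x|)$ of \lref{Gtails} already dominates $\X^{d-1-\delta}\xi_\ep(x)$ once the exponential rate is relaxed very slightly, so nothing further is required; the whole content of the proposition is the polynomial profile for $|x|\lesssim\ep^{-1}|\log\ep|$, a range on which the zeroth-order term $\ep^2G_\ep$ is a borderline lower-order perturbation.

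\emph{Preliminary ingredients.} We will rely on three facts. (i) Explicit bounds for the homogenized modified Green's function $\bar G_\ep$, i.e.\ the solution of $\ep^2\bar G_\ep-\tr(\bar A D^2\bar G_\ep)=\chi_{B_\ell}$: after a linear change of variables this is a smeared screened Newtonian potential, so that $\bar G_\ep(x)\simeq\xi_\ep(x)$, $\osc_{B_r(x)}\bar G_\ep\lesssim(r/|x|)\,\xi_\ep(x)$ for $r\le\tfrac12|x|$, and analogous estimates hold for solutions of $-\tr(\bar A D^2\cdot)=0$ on annuli. (ii) An a priori $L^1$ bound: in nondivergence form there is no energy identity, so we instead test the equation against the density $m$ of the invariant measure associated to $\tr(A D^2\cdot)$ --- whose stationarity and two-sided boundedness are the subject of the invariant-measure estimates of this section --- which gives $\ep^2\int_{\Rd}m\,G_\ep=\int_{B_\ell}m$, hence $\int_{\Rd}G_\ep\lesssim\ep^{-2}$. (iii) Away from $B_\ell$ we have $-\tr(A D^2G_\ep)=-\ep^2G_\ep\le0$, so $G_\ep\ge0$ is there a subsolution of $-\tr(A D^2\cdot)=0$; the Krylov-Safonov local maximum principle (together with the Alexandrov-Bakelman-Pucci estimate near $B_\ell$) then turns bounds on annular averages of $G_\ep$ into pointwise bounds, and, via \lref{interp} and \cref{C01reg}, does the same for oscillations.

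\emph{The iteration.} Fix $\rho$ with $\X\le\rho\lesssim\ep^{-1}$ and $B_\ell\subset B_{\rho/2}$, and compare $G_\ep$ on $B_{2\rho}$ with the solution $v$ of $-\tr(\bar A D^2v)=\chi_{B_\ell}-\ep^2G_\ep$ in $B_{2\rho}$, $v=G_\ep$ on $\partial B_{2\rho}$ (mollifying the indicator if one wishes). Since $\rho\ge\X$, \pref{subopt} bounds $\rho^{-2}\sup_{B_\rho}|G_\ep-v|$ by $C\rho^{-\alpha}$ times a data term estimated using the bound already established on smaller scales together with the $C^{0,\sigma}$ estimate for $\ep^2G_\ep$; and, because $v$ obeys the constant-coefficient operator, ingredient (i) controls how $v$ decays across the annulus $B_{2\rho}\setminus B_{\rho/4}$. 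Writing $\Phi(\rho):=\rho^{d-2}e^{a\ep\rho}\sup_{\partial B_\rho}G_\ep$ (with the evident $|\log\ep|$ modifications in $d=2,4$), one obtains in this way, in the spirit of \pref{quadapprox}, a two-sided stability estimate of the form $|\Phi(2\rho)-\Phi(\rho)|\lesssim\rho^{-\alpha}\Phi(\rho)+(\text{error from }\ep^2G_\ep)$; the multiplicative errors are summable, $\prod_j(1\pm C\rho_j^{-\alpha})\in[c,C]$, so $\Phi$ stays comparable to $\Phi(\X)$ throughout $[\X,\ep^{-1}]$, while the zeroth-order error term, processed through the explicit profile $\xi_\ep$, is precisely what produces the exponential weight and the $d=2,4$ logarithms. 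The mass $\int_{\Rd}G_\ep$ is dominated by the scales $\rho\sim\ep^{-1}$ and is hence comparable to $\Phi(\X)\cdot\ep^{-2}$, so ingredient (ii) forces $\Phi(\X)\lesssim1$, up to the accumulation of homogenization errors over the $\sim|\log\ep|$ dyadic scales and the loss incurred at the scale $\X$ (where \pref{subopt} no longer applies); this accumulation is absorbed into the power $\X^{d-1-\delta}$, and \eqref{e.Greenest} follows. Finally, \eqref{e.Greenoscest} is obtained by applying \cref{C01reg} on $B_{|x|/2}(x)$, on which $G_\ep$ solves $-\tr(A D^2G_\ep)=-\ep^2G_\ep$ with right-hand side controlled through \eqref{e.Greenest}; the prefactor $T_x\X$ is the value of the regularity random variable at the coefficient field translated by $x$.

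\emph{Main obstacle.} The delicate point is the zeroth-order term $\ep^2G_\ep$: once $|x|\gtrsim\ep^{-1}$ it is no longer lower order, so the renormalized quantity $\Phi$ and its built-in exponential weight must be calibrated so that this term simultaneously generates the sharp rate $e^{-a\ep|x|}$ and the correct dimension-dependent logarithmic corrections at $d=2$ and $d=4$, without disturbing the $|x|^{2-d}$ behaviour on smaller scales, and so that the estimate dovetails with the deterministic one of \lref{Gtails} near $|x|\sim\ep^{-1}|\log\ep|$. A second, more structural difficulty, absent from the divergence-form theory, is the lack of an energy estimate, which promotes the a priori $L^1$ bound --- and with it the stationary, two-sided bounds on the invariant measure $m$ --- from a convenience to a necessity.
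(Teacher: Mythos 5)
Your strategy shares with the paper the central idea --- use Proposition~\ref{p.subopt} to transfer the decay of the homogenized Green's function to $G_\ep$ --- but the mechanism is genuinely different. The paper's proof is a \emph{one-shot comparison}: it constructs, in Lemmas~\ref{l.testfcnd3} and~\ref{l.testfcnd2}, a single global barrier $\varphi_R$ which (i) is a supersolution of the heterogeneous equation with right side $\geq\chi_{B_\ell}$ inside $B_R$, (ii) lies below a smooth function $\psi_R$ that is a \emph{strict} supersolution of the Laplacian outside $B_{R/2}$, and (iii) obeys the desired decay $\varphi_R\lesssim R^{d-2+\gamma}(1+|x|)^{2-d}$. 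If $G_\ep\not\leq\varphi_R$, the maximum principle pushes the touching point into $\Rd\setminus B_R$, where $G_\ep$ (a heterogeneous subsolution) touches $\tilde\psi_R$ (a strict homogenized supersolution) from below; this event forces the error random variable $\mathcal Y_z$ of Proposition~\ref{p.subopt} to be large at a nearby lattice point $z$ with $|z|>R$, which is precisely the event $\X>R$. No scale-by-scale iteration, no Harnack/annular averaging, and --- crucially --- no $L^1$ normalization is needed: the normalization of the barrier is built in through property (i). In contrast, you propose an iteration in the style of Proposition~\ref{p.quadapprox}, tracking a renormalized annular profile $\Phi(\rho)$ and pinning down $\Phi(\X)$ by an a~priori $L^1$ bound on $G_\ep$.

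The serious problem is that your $L^1$ bound is circular. You derive $\int_{\Rd}G_\ep\lesssim\ep^{-2}$ by testing against the invariant measure density $m$ and invoking ``stationarity and two-sided boundedness'' of $m$. But in the paper those invariant-measure bounds are a \emph{consequence} of Proposition~\ref{p.Green}, stated after it as an application: the estimate $\int_{B_\ell(y)} m_\ep \leq \int_{\Rd}G_\ep\leq\X^{d-1-\delta}$ is derived from~\eqref{e.Greenest}, not the other way around. Without an independent proof of boundedness of $m$ (which is essentially as hard as the Green's function bound you are trying to prove, and is not supplied anywhere in the paper), the normalization step $\Phi(\X)\lesssim 1$ is unsupported, and the iteration has nothing to anchor it. There is also a smaller arithmetical slip in that step: with the paper's invariant measure equation $\ep^2(m_\ep-1)-\div D(Am_\ep)=0$, testing the Green's function equation against $m_\ep$ gives $\ep^2\int G_\ep=\int_{B_\ell}m_\ep$, not $\ep^2\int mG_\ep=\int_{B_\ell}m$; and to convert even the correct identity into $\int G_\ep\lesssim\ep^{-2}$ one again needs $\int_{B_\ell}m_\ep\lesssim 1$, which is exactly the circular input. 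Your treatment of the oscillation bound~\eqref{e.Greenoscest} via Corollary~\ref{c.C01reg} on $B_{|x|/2}(x)$ with right side $-\ep^2 G_\ep$ is the intended argument and is fine.
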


We emphasize that \eqref{e.Greenest} is a \emph{random} estimate, and the proof relies on the homogenization process. In contrast to the situation for divergence form equations, there is no \emph{deterministic} estimate for the decay of the Green's functions. Consider that for a general $A\in \Omega$, the Green's function $G(\cdot,0;A)$ solves
\begin{equation*} \label{}
-\tr\left( AD^2G(\cdot,0;A) \right) = \delta_0 \quad \mbox{in} \ \Rd.
\end{equation*}
The solution may behave, for $|x| \gg 1$, like a multiple of 
\begin{equation*} \label{}
K_\gamma(x) := \left\{ \begin{aligned} 
& |x|^{-\gamma} & \ \gamma > 0, \\
& \log |x| & \ \gamma = 0, \\
& - |x|^{-\gamma} & \ \gamma < 0, \\
\end{aligned}  \right.
\end{equation*}
for any exponent~$\gamma$ in the range
\begin{equation*} \label{}
 \frac{d-1}{\Lambda} -1 \leq \gamma \leq \Lambda(d-1) - 1.
\end{equation*}
In particular, if $\Lambda$ is large, then $\gamma$ may be negative and so $G(\cdot,0; A)$ may be bounded near the origin. To see that this range for $\gamma$ is sharp, it suffices to consider, respectively, the diffusion matrices
\begin{equation*} \label{}
A_1(x) = \Lambda \frac{x \otimes x}{|x|^2} + \left( I - \frac{x \otimes x}{|x|^2} \right) \quad \mbox{and} \quad A_2(x) =  \frac{x \otimes x}{|x|^2} + \Lambda \left( I - \frac{x \otimes x}{|x|^2} \right).
\end{equation*}
Note that $A_1$ and $A_2$ can be altered slightly to be smooth at $x=0$ without changing the decay of $G$ at infinity.

Before we discuss the proof of Proposition \ref{p.Green}, we mention an interesting application to the \emph{invariant measure} associated with \eqref{e.greens2}. Recall that the invariant measure is defined to be the solution $\im_\ep$ of the equation in double-divergence form:
\begin{equation*} \label{}
\ep^2(\im_\ep-1) - \div\left( D( A(x)\im_\ep ) \right) = 0 \quad \mbox{in} \ \Rd. 
\end{equation*}
By \eqref{e.Greenest}, we have that for every $y\in \mathbb{R}^{d}$, 
\begin{equation*} \label{}
\int_{B_{\ell}(y)} \im_\ep(x)\,dx \leq \int_{\Rd} G_{\ep}(x,0)\, dx\leq \X^{d-1-\delta}.
\end{equation*}
In particular, we deduce that, for some $\delta > 0$, 
\begin{equation*} \label{}
\P \left[ \int_{B_{\ell}(y)} \im_\ep(x)\,dx > t \right] \leq C\exp\left( -t^{\frac{d}{d-1}+\delta} \right).
\end{equation*}
This gives a very strong bound on the location of particles undergoing a diffusion in the random environment. 

\smallskip

We now return to the proof of Proposition \ref{p.Green}. Without loss of generality, we may change variables and assume that the effective operator $\overline{A}=I$. The proof of \eqref{e.Greenoscest} is based on the idea of using homogenization to compare the Green's function for the heterogeneous operator to that of the homogenized operator. The algebraic error estimates for homogenization in Proposition~\ref{p.subopt} are just enough information to show that, with overwhelming probability, the ratio of Green's functions must be bounded at infinity. This is demonstrated by comparing the modified Green's function $G_{\ep}(\cdot, 0)$ to a family of carefully constructed test functions. 

The test functions $\{ \varphi_R\}_{R\geq C}$ will possess the following three properties:
\begin{equation} \label{e.varphiRBR}
\inf_{A\in\Omega} -\tr\left( A(x) D^2 \varphi_R \right) \geq \chi_{B_\ell} \quad \mbox{in} \ B_R,
\end{equation}
\begin{equation} \label{e.varphiRBR2}
-\Delta \varphi_R (x) \gtrsim |x|^{-d} \quad \mbox{in} \ \Rd \setminus B_{R/2},
\end{equation}
\begin{equation} \label{e.varphiRBR3}
\varphi_R(x) \lesssim R^{d-1-\delta}(1+ |x|^{2-d}) \quad \mbox{in} \ \Rd\setminus B_R.
\end{equation}
As we will show, these properties imply, for large enough $R$ (which will be random and depend on the value of $\X$ from many different applications of Proposition~\ref{p.subopt}), that $G_\ep(\cdot,0) \leq \varphi_R$ in $\Rd$.

\smallskip

The properties of the barrier function $\vp_{R}$ inside and outside of $B_{R}$ will be used to compare with $G_{\ep}(\cdot, 0)$ in different ways.  If $G_\ep(\cdot,0) \not \leq \varphi_R$ then, since they both decay at infinity, $G_\ep(\cdot,0) - \varphi_R$ must achieve its global maximum somewhere in $\Rd$. Since $\varphi_R$ is a supersolution of~\eqref{e.varphiRBR}, this point must lie in $\Rd\setminus B_R$. As~$\varphi_R$ is a supersolution of the homogenized equation outside $B_{R/2}$, this event is very unlikely for $R\gg1$, by Proposition~\ref{p.subopt}. Note that there is a trade-off in our selection of the parameter $R$: if $R$ is relatively large, then $\varphi_R$ is larger and hence the conclusion $G_\ep(\cdot,0) \leq \varphi_R$ is weaker, however the probability that the conclusion fails is also much smaller.

\smallskip

Since the Green's function for the Laplacian has different qualitative behavior in dimensions $d=2$ and $d>2$, we split the proof of Proposition~\ref{p.Green} into these two cases, which are handled in the following subsections. 

\subsection{Proof of Proposition \ref{p.Green}: Dimensions three and larger}

\begin{lemma}\label{l.testfcnd3}
Let $s\in (0, d)$. Then there  exist constants $C, c, \gamma,\beta>0$, depending only on~$(s, d, \la, \La, \ell)$, and a family of continuous functions $\left\{\vp_{R}\right\}_{R\geq C}$ satisfying the following: (i) for every $R \geq C$ and $x\in\Rd$, 
\begin{equation} \label{e.varphimax}
\varphi_{R}(x) \leq C R^{d-2+\gamma} \left( 1 + |x|\right)^{2-d},
\end{equation}
(ii) there exists a smooth function $\psi_{R}$ such that 
\begin{equation}
\left\{ 
\begin{aligned}
& -\Delta \psi_{R}\geq  c|x|^{-2-\beta}\psi_{R} & \mbox{in} & \ \Rd\setminus B_{R/2}, \\
& \varphi_{R}\leq \psi_{R} & \mbox{in} & \ \Rd\setminus B_{R/2},
\end{aligned}
\right.
\end{equation}
and (iii) for each $R\geq C$ and $A\in \Omega$, we have 
\begin{equation} \label{e.eqvarphiR}
-\tr\left(A(x) D^2 \varphi_R \right) \geq \chi_{B_\ell} \quad \mbox{in} \ B_R.
\end{equation}
\end{lemma}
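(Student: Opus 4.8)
The plan is to construct the barrier functions $\varphi_R$ explicitly as a sum of two pieces: one which handles the subsolution property near the smeared singularity $B_\ell$, and one which provides the correct decay $\sim |x|^{2-d}$ at infinity while still being a supersolution in all of $B_R$. First I would recall the deterministic computation from Lemma~\ref{l.Gtails}: for a radial profile $\phi(x) = |x|^{-\gamma}$ (away from the origin) one has $-\tr(A(x)D^2\phi) = \phi(x)|x|^{-2}\bigl(\gamma(\gamma+1)\,\tfrac{x}{|x|}\cdot A(x)\tfrac{x}{|x|} - \gamma\,\mathrm{tr}\,A(x)\bigr)$, which is nonnegative for every $A \in \Omega$ provided $\gamma$ is chosen in the admissible range, for instance $\gamma = \gamma(d,\lambda,\Lambda) := \tfrac{d-1}{\Lambda} - 1 \vee$ (a small positive number), so that $-\tr(A(x)D^2(|x|^{-\gamma})) \geq c\,|x|^{-\gamma-2}$ for all $A \in \Omega$ and $x \neq 0$. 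Regularizing $|x|^{-\gamma}$ near the origin to a smooth function that is $\geq 1$ on $B_\ell$ and adjusting by a multiplicative constant gives the "inner" part satisfying~\eqref{e.eqvarphiR} on all of $\Rd$; the point is that this inequality holds globally, not just in $B_R$, so adding any further supersolution of the heterogeneous equation preserves it.

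Next I would add an "outer" correction to fix the growth. The regularized $|x|^{-\gamma}$ decays like $|x|^{-\gamma}$, which in general is slower or faster than the target $|x|^{2-d}$ depending on $\Lambda$; to get the uniform bound~\eqref{e.varphimax} with exponent $d-2+\gamma'$ (here I am using the paper's $\gamma$ in~\eqref{e.varphimax} for the scaling power, distinct from my radial exponent), one rescales: define $\varphi_R(x) := R^{d-2+\gamma}\,\Phi(x/R)$ or, more simply, take $\varphi_R$ to be $C R^{\gamma''}$ times a fixed profile $\Phi$ that equals a constant multiple of $|x|^{-\gamma}$ (regularized) near the origin and is patched, via a smooth cutoff at scale $R$, to $c\,|x|^{2-d}$ for $|x| \geq R$. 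The patching region $R/2 \leq |x| \leq 2R$ must be handled so that $-\tr(A D^2\varphi_R) \geq 0$ still holds there for every $A \in \Omega$; this is arranged by letting the profile be \emph{concave} as a function of $|x|$ through the transition (superposition of two decreasing radial supersolutions, glued with the larger one dominating), using that the minimum of two radial supersolutions of the (fully nonlinear, hence comparison-respecting) Pucci-type operator $\inf_{A \in \Omega}(-\tr(AD^2\cdot))$ is again a supersolution.

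For property (ii) I would simply take $\psi_R(x) := C R^{d-2+\gamma}(1+|x|)^{2-d}$ for $|x| \geq R/2$ — or a slightly steeper power $|x|^{2-d-\beta}$ — and verify $-\Delta\psi_R = c\,|x|^{-d}\psi_R \cdot|x|^{\text{something}} \geq c'\,|x|^{-2-\beta}\psi_R$ by a direct Laplacian computation on a radial power, choosing $\beta = \beta(d) > 0$ small; the domination $\varphi_R \leq \psi_R$ on $\Rd \setminus B_{R/2}$ is then built into the construction of $\varphi_R$ in the outer region (indeed one can \emph{define} $\varphi_R$ outside $B_{R/2}$ to be this $\psi_R$, making the inequality an equality, provided one checks $\psi_R$ is itself a supersolution of $\inf_{A}(-\tr(AD^2\cdot))$ there, which follows from the radial-power computation in Lemma~\ref{l.Gtails} applied with the exponent $d-2$). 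The dependence of the threshold $C$ on $s$ enters because later (in the proof of Proposition~\ref{p.Green}) one applies Proposition~\ref{p.subopt} with the random variable $\X$ and needs $R$ comparable to a power of $\X$; at the level of this lemma $s$ only constrains how the constants are bookkept, so I would just carry $s$ through as a parameter.

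The main obstacle I anticipate is the gluing in the annulus $R/2 \leq |x| \leq 2R$: one needs a \emph{single} radial profile that is simultaneously (a) a supersolution of the maximal Pucci operator everywhere (so~\eqref{e.eqvarphiR} survives after adding it to the inner part), (b) bounded above by $CR^{d-2+\gamma}(1+|x|)^{2-d}$, and (c) bounded below by $c\,|x|^{-2-\beta}\psi_R$-type quantities through the transition so that (ii) holds with room to spare. The clean way around this is to avoid an explicit patch: take $\varphi_R := \min\{\,\text{(regularized inner profile, scaled by }R^{d-2+\gamma}\text{)},\ \psi_R\,\}$ where $\psi_R(x) = CR^{d-2+\gamma}(1+|x|)^{2-d}$, choose the scaling constants so that the two profiles cross exactly once near $|x| \sim R$, and invoke that the minimum of supersolutions of $\inf_A(-\tr(AD^2\cdot))$ is a supersolution (in the viscosity sense, hence valid for the merely-continuous $\varphi_R$); then (i) is immediate since $\varphi_R \leq \psi_R$ globally after $|x| \sim R$ and $\varphi_R \leq CR^{d-2+\gamma}|x|^{-\gamma} \leq CR^{d-2+\gamma}(1+|x|)^{2-d}$ for $|x| \lesssim R$ by the choice of constants, (ii) holds because $\varphi_R = \psi_R$ on $\Rd\setminus B_{R/2}$ once $R$ is large, and (iii) holds because $\varphi_R$ exceeds the inner supersolution which already dominates $\chi_{B_\ell}$. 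I would check the single-crossing and constant-matching by an elementary one-variable analysis and record the resulting constants as depending only on $(s,d,\lambda,\Lambda,\ell)$.
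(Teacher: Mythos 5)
Your overall architecture (inner supersolution piece near the singularity glued to a near-fundamental-solution outer piece, verified by explicit radial computation) matches the paper's strategy, but several of the concrete choices you propose do not survive the computation and in fact hide the cleverest parts of the actual construction.

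\textbf{The inner profile fails for general $\lambda,\Lambda$.} You want $|x|^{-\gamma}$ (regularized) to satisfy $-\tr(AD^2(|x|^{-\gamma}))\geq 0$ for every $A\in\Omega$. The correct computation gives $-\tr(AD^2(|x|^{-\gamma})) = \gamma|x|^{-\gamma-2}\bigl(\tr A - (\gamma+2)\,\frac{x\cdot Ax}{|x|^2}\bigr)$, and in the worst case $\tr A - (\gamma+2)\frac{x\cdot Ax}{|x|^2}\geq (d-1)\lambda - (\gamma+1)\Lambda$, so a \emph{positive} $\gamma$ works only if $\gamma\leq \frac{(d-1)\lambda}{\Lambda}-1$. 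This is $\leq 0$ whenever $\Lambda\geq (d-1)\lambda$, i.e.\ for most of the admissible ellipticity range, so your ansatz has no valid exponent. (Your claim that the admissible range for $\gamma$ is the interval $\left[\frac{d-1}{\Lambda}-1,\Lambda(d-1)-1\right]$ is the range of possible decays of a \emph{solution} $G(\cdot,0;A)$ for some $A$, not the range for which $|x|^{-\gamma}$ is a supersolution for \emph{every} $A$; these are different problems.) The paper sidesteps this entirely by using the bounded, radially decreasing profile $m_R-\frac{h}{\gamma}(\ell^2+|x|^2)^{\gamma/2}$ with subquadratic exponent $\gamma:=\max\{\tfrac12,1-\tfrac{\lambda}{2\Lambda}\}\in[\tfrac12,1)$. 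Its Hessian is negative definite in the $d-1$ tangential directions, and the single radial eigenvalue, though possibly positive, is small enough to be dominated: one gets $\tr(A\,[\text{bracket}])\geq (d-1)\lambda-(1-\gamma)\Lambda\geq (d-\tfrac32)\lambda>0$. This is the key trick you are missing.

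\textbf{The outer profile $|x|^{2-d-\beta}$ is a subsolution, not a supersolution, of the Laplacian.} For $\gamma'=d-2+\beta>d-2$, one computes $\Delta(|x|^{-\gamma'})=\gamma'(\gamma'-(d-2))|x|^{-\gamma'-2}>0$, so $-\Delta(|x|^{2-d-\beta})<0$; you cannot use it for $\psi_R$. Your other option $(1+|x|)^{2-d}$ is indeed a supersolution, but it satisfies $-\Delta\psi\sim |x|^{-3}\psi$, i.e.\ an effective $\beta=1$. In Step~3 of the proof of Proposition~\ref{p.Green} one needs $\beta<\alpha$, where $\alpha$ is the (possibly tiny) exponent from Proposition~\ref{p.subopt}; $\beta=1$ is generically too large. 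The paper's exponential perturbation $\psi_R(x)=k_R|x|^{2-d}\exp(-\tfrac1\beta|x|^{-\beta})$ gives $-\Delta\psi_R=|x|^{-2-\beta}(d-2+\beta-|x|^{-\beta})\psi_R$ for any prescribed small $\beta>0$; this is the device that makes the later comparison argument close. You should not expect to get away without it.

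\textbf{The min-of-supersolutions gluing does not close.} You rely on the minimum of supersolutions of $\inf_{A}(-\tr(A D^2\cdot))$ being again a supersolution, and you assert that $\psi_R\propto (1+|x|)^{2-d}$ is such a supersolution. It is not: the exponent $d-2$ is again outside the range $\bigl(0,\frac{(d-1)\lambda}{\Lambda}-1\bigr]$ unless $\Lambda=\lambda$. The reference you give (Lemma~\ref{l.Gtails}) establishes the supersolution property for the \emph{exponential} profile $\exp(-a\ep|x|)$, not for a power, and does not bear on this point. The reason the paper gets away with a $\psi_R$ that is only a Laplacian supersolution is that~\eqref{e.eqvarphiR} is required only \emph{in $B_R$}, where $\varphi_R$ equals the heterogeneous inner piece; outside $B_R$ the outer piece is used only against the homogenized operator, via Proposition~\ref{p.subopt}. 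Consequently the gluing must occur exactly at $|x|=R$, and the content of the paper's Step~2 is not a viscosity minimum but a derivative comparison showing $|D\varphi_R|<|D\psi_R|$ on $B_R\setminus B_{R/2}$, which yields $\varphi_R\leq\psi_R$ on that annulus; this is what property~(ii) actually requires.

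In short: the plan has the right shape, but the two radial-power choices you make (inner $|x|^{-\gamma}$, outer $|x|^{2-d-\beta}$) are both unusable, and the min-gluing rests on a false supersolution claim. All three are precisely the points where the paper's explicit formulas — the subquadratic $m_R-\frac{h}{\gamma}(\ell^2+|x|^2)^{\gamma/2}$ and the exponentially perturbed $k_R|x|^{2-d}\exp(-\tfrac1\beta|x|^{-\beta})$ — do real work.
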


\begin{proof}
Throughout, we fix $s\in (0,d)$ and let $C$ and $c$ denote positive constants which depend only on $(s,d,\lambda,\Lambda,\ell)$ and may vary in each occurrence.


\smallskip

We define $\varphi_R$. For each $R \geq 4\ell$, we set
\begin{equation*} \label{}
\varphi_R(x):= \left\{ \begin{aligned}
& m_R - \frac{h}{\gamma} \left( \ell^2 + |x|^2 \right)^{\frac\gamma2}, && 0\leq |x| \leq R, \\
& k_R |x|^{2-d} \exp \left( -\frac1\beta |x|^{-\beta} \right),  && |x|> R, \\
\end{aligned} \right.
\end{equation*}
where we define the following constants:
\begin{equation*}
2\beta  :=  \alpha(s,d,\lambda,\Lambda,\ell) > 0 
\end{equation*}
is the exponent from Proposition~\ref{p.subopt} with $\sigma=1$,
\begin{align*}
 \gamma & := \max\left\{ \frac12 , 1-\frac{\lambda}{2\Lambda} \right\}  \\ 
 h &:= \frac{2}{\lambda} (2\ell)^{2-\gamma} \\
 k_R & := h \left(d-2-2^\beta R^{-\beta} \right)^{-1} R^{d-2+\gamma} \exp\left(\frac1\beta 2^\beta R^{-\beta} \right)  \\
 m_R & :=  \frac{h}{\gamma} \left( \ell^2 + R^2 \right)^{\frac\gamma2} + k_R R^{2-d} \exp \left( -\frac1\beta R^{-\beta} \right).
\end{align*}
Notice that the choice of $m_{R}$ makes $\vp_{R}$ continuous. We next perform some calculations to guarantee that this choice of $\vp_{R}$ satisfies the above claims. 
\smallskip

\emph{Step 1.}
We check that for every $R\geq 4\ell$ and $x\in \Rd$, ~\eqref{e.varphimax}~holds. Note that $\beta = \frac12\alpha \geq c$ and thus, for every $R \geq 4\ell$,
\begin{equation} \label{e.easyexp}
c\leq \exp\left( -\frac1\beta R^{-\beta}  \right) \leq 1.
\end{equation}
For such $R$, we also have that since $d\geq 3$, $(d-2-2^\beta R^{-\beta}) \geq c$. Morever, since $R \geq 4\ell \geq 4$, this implies that $(2/R)^\beta \leq 1-c$. Using also that $h\leq C$, we deduce that for every $R\geq 4\ell$,
\begin{equation} \label{e.easykas}
k_R \leq CR^{d-2+\gamma}\quad \mbox{and} \quad m_R \leq C R^\gamma .
\end{equation}
For $|x|>R$,~\eqref{e.varphimax} is immediate from the definition of $\varphi_R$,~\eqref{e.easyexp} and~\eqref{e.easykas}. For~$|x|\leq R$, we first note that $\varphi_R$ is a decreasing function in the radial direction and therefore $\sup_{\Rd} \varphi_R = \varphi_R(0) \leq m_R $. We then use~\eqref{e.easykas} to get, for every $|x| \leq R$,
\begin{equation*} \label{}
\varphi_R(x)  \leq m_R \leq CR^\gamma \leq C R^{d-2+\gamma} (1+|x|)^{2-d}.
\end{equation*}
This gives~\eqref{e.varphimax}.

\smallskip

\emph{Step 2.} We check that $\varphi_R$ satisfies
\begin{equation} \label{e.belowhalf}
\varphi_R(x) \leq \psi_R (x): = k_R |x|^{2-d} \exp\left( -\frac1\beta |x|^{-\beta} \right)\quad \mbox{in} \ \Rd\setminus B_{R/2}.
\end{equation}
Since this holds with equality for $|x| \geq R$, we just need to check it in the annulus $\{ R/2 \leq |x| < R\}$. For this it suffices to show that in this annulus, $\psi_R-\varphi_R$ is decreasing in the radial direction. Since both $\psi_R$ and $\varphi_R$ are decreasing radial functions, we simply need to check that
\begin{equation} \label{e.checkdec}
\left|D\varphi_R(x) \right| < \left|D\psi_R(x) \right| \quad \mbox{for every} \ x\in B_R \setminus B_{R/2}.
 \end{equation}
We compute, for $R/2\leq |x|\leq R$, since $\gamma\leq 1$, 
\begin{align*} \label{}
\left| D\varphi_R(x) \right| & = h \left( \ell^2 + |x|^2 \right)^{\frac\gamma2-1} |x| \leq h |x|^{\gamma-1}
\end{align*}
and
\begin{align*} \label{}
\left| D\psi_R(x) \right| & =|x|^{-1}\left(d-2-|x|^{-\beta}\right)\psi_{R}(x)\\
&= k_R \left(d-2-|x|^{-\beta}\right) |x|^{1-d} \exp\left( -\frac1\beta |x|^{-\beta} \right) \\
& \geq k_R \left(d-2-2^\beta R^{-\beta} \right) |x|^{1-d} \exp\left( -\frac1\beta 2^\beta R^{-\beta} \right).
\end{align*}
It is now evident that the choice of $k_R$ ensures that~\eqref{e.checkdec} holds. This completes the proof of~\eqref{e.belowhalf}.  

\smallskip

\emph{Step 3.} We check that $\psi_R$ satisfies 
\begin{equation} \label{e.psiR}
-\Delta \psi_R(x) \geq c |x|^{-2-\beta} \psi_R(x) \quad \mbox{in} \ |x| \geq C.
\end{equation}
By a direct computation, we find that, for $x\neq 0$,
\begin{align*} 
-\Delta \psi_R(x) & = |x|^{-2-\beta}  \left( d-2+\beta -|x|^{-\beta} \right)\psi_R(x).
\end{align*}
This yields~\eqref{e.psiR}. For future reference, we also note that for every $|x| >1$,
\begin{equation} \label{e.hesspsib}
|x|^{-2} \osc_{B_{|x|/2}(x)}  \psi_R + \sup_{y\in B_{|x|/2}(x)}\left( |y|^{-1} \left| D\psi_R(y) \right| \right) \leq C|x|^{-2} \psi_R(x).
\end{equation}
This follows from the computation
\begin{equation*} \label{}
\left| D\psi_R(x)\right| = |x|^{-1} \psi_R(x) \left( 2-d+|x|^{-\beta} \right).
\end{equation*}

\smallskip

\emph{Step 4.} We check that \eqref{e.eqvarphiR} holds. By a direct computation, we find that for every $x\in B_R$,
\begin{align*} \label{}
D^2 \varphi_R(x) &= -h\left(\ell^{2}+|x|^{2}\right)^{\frac{\gamma}{2}-1}\left(Id+\frac{\gamma-2}{\ell^{2}+|x|^{2}}\left(x\otimes x\right)\right)\\
&=-h \left( \ell^2+|x|^2  \right)^{\frac\gamma2-1} \left( \left( Id - \frac{x\otimes x}{|x|^2} \right) + \frac{\ell^2 - (1-\gamma)|x|^2}{\ell^2+|x|^2}\left( \frac{x\otimes x}{|x|^2} \right)   \right).
\end{align*}
Making use of our choice of $\gamma$, we see that, for any $A\in \Omega$ and $x\in B_R$,
\begin{align*} \label{}
-\tr\left( A(x) D^2 \varphi_R(x)  \right)
&  \geq h \left( \ell^2 +|x|^2 \right)^{\frac\gamma2 - 1} \left( (d-1)\lambda  -  \Lambda(1-\gamma)(\ell^2+|x|^2)^{-1}|x|^2  \right)   \\
& \geq h\left( \ell^2 +|x|^2 \right)^{\frac\gamma2 - 1} \left( (d-1)\lambda  -  \Lambda(1-\gamma)\right).
\end{align*}
The last expression on the right side is positive since, by the choice of $\gamma$, 
\begin{equation*} \label{}
(d-1)\lambda -  \Lambda(1-\gamma) \geq \left(d-\frac32\right)\lambda >\lambda > 0,
\end{equation*}
while for $x\in B_\ell$, we have, by the choice of $h$, 
\begin{align*} \label{}
h\left( \ell^2 +|x|^2 \right)^{\frac\gamma2 - 1} \left( (d-1)\lambda  -  \Lambda(1-\gamma)\right) \geq h\left( 2\ell^2 \right)^{\frac\gamma2 - 1} \lambda > 1.
\end{align*}
This completes the proof of~\eqref{e.eqvarphiR}. 
\end{proof}

\begin{proof}[Proof of Proposition~\ref{p.Green}~when $d\geq 3$]
As before, we fix $s\in (0,d)$ and let $C$ and $c$ denote positive constants which depend only on $(s,d, \la, \La, \ell)$. We use the notation developed in Lemma~\ref{l.testfcnd3} throughout the proof. 

\smallskip

We make one reduction before beginning the main argument. Rather than proving~\eqref{e.Greenest}, it suffices to prove
\begin{equation} \label{e.Greenest2}
\forall x\in\Rd, \quad 
G_\ep(x,0) \leq \X^{d-1-\delta} \left( 1 +  |x| \right)^{2-d}.
\end{equation}
To see this, we notice that 
\begin{equation*} \label{}
G_\ep(x,0) \leq \left( \sup_{|x|\leq\ep^{-1}} \frac{G_\ep(x,0)}{(1+|x|)^{2-d}} \right) \ep^{d-2} \exp(a)\exp\left( -a\ep|x| \right) \quad \mbox{in} \ \Rd \setminus B_{\ep^{-1}}.
\end{equation*}
Indeed, the right hand side is larger than the left hand side on $\partial B_{\ep^{-1}}$, and hence in $\Rd\setminus B_{\ep^{-1}}$ by the comparison principle and the fact that the right hand side is a supersolution of~\eqref{e.detsupersol} for $a(d,\lambda,\Lambda)>0$ (by the proof of Lemma~\ref{l.Gtails}). We then obtain~\eqref{e.Greenest} in $\Rd \setminus B_{\ep^{-1}}$ by replacing $\X$ by $C\X$ and $a$ by $\frac12a$, using~\eqref{e.Greenest2}, and noting that
\begin{equation*} \label{}
\ep^{d-2} \exp\left( -a\ep|x| \right) \leq C |x|^{2-d} \exp\left( -\frac a2 \ep |x| \right) \quad \mbox{for every} \ |x| \geq \ep^{-1}.
\end{equation*}
We also get~\eqref{e.Greenest} in $B_{\ep^{-1}}$, with $\X$ again replaced by $C\X$, from~\eqref{e.Greenest2} and the simple inequality
\begin{equation*} \label{}
\exp\left( -a\ep|x| \right) \geq c \quad \mbox{for every} \ |x| \leq \ep^{-1}.
\end{equation*}
\emph{Step 1.} We define $\X$ and check that it has the desired integrability. Let $\mathcal Y$ denote the random variable~$\X$ in the statement of Proposition~\ref{p.subopt} in $B_{R}$ with $s$ as above and $\sigma=1$. Also denote $\mathcal Y_x(A):= \mathcal Y(T_x A)$, which controls the error in balls of radius $R$ centered at a point $x\in\Rd$. 

We now define
\begin{equation} \label{e.defscrC}
\X(A) := \sup \left\{ |z|  \, :\, z\in \Zd, \ \mathcal Y_z(A) \geq 2^d|z|^s \right\}.
\end{equation}
The main point is that $\X$ has the following property by Proposition~\ref{p.subopt}: for every $z\in \Zd$ with $|z| > \X$, and every $R>\frac18|z|$ and $g\in C^{0,1} (\partial B_R(z))$, every pair $u,v\in C(\overline B_R)$ such that 
\begin{equation} \label{e.X1defp1}
\left\{  \begin{aligned} 
& - \tr(A(x)D^2u) \leq 0 \leq -\Delta v & \mbox{in} & \ B_R(z), \\
& u \leq g \leq v & \mbox{on} & \ \partial B_R(z),
\end{aligned}
\right.
\end{equation}
must satisfy the estimate
\begin{equation} \label{e.X1defp2}
R^{-2}\sup_{B_R(z)} \left( u(x) - v(x) \right) \leq CR^{-\alpha} \left( R^{-2} \osc_{\partial B_{R}(z)} g + R^{-1} \left[ g \right]_{C^{0,1}(\partial B_{R}(z))} \right).
\end{equation}

Let us check that 
\begin{equation} \label{e.expC1m}
\E \left[ \exp\left(\X^s\right)\right] \leq C(s,d,\lambda, \Lambda,\ell) < \infty. 
\end{equation}
A union bound and stationarity yield, for $t\geq 1$, 
\begin{align*} \label{}
\P \left[ \X > t \right] & \leq \sum_{z\in \Zd \setminus B_t} \P \left[ \mathcal Y_z \geq 2^d|z|^s
 \right]
 \\ & \leq \sum_{n\in \N, \, 2^{n} \geq t} \ \sum_{z\in B_{2^n} \setminus B_{2^{n-1}}} \P \left[ \mathcal Y_z \geq 2^d|z|^s
 \right] \\ 
 & \leq C \sum_{n\in \N, \, 2^{n} \geq t} 2^{dn} \,\P \left[\mathcal Y \geq 2^{(n-1)s+d} \right]. 
\end{align*}
By Proposition~\ref{p.subopt} and Chebyshev's inequality,
\begin{align*} \label{}
\sum_{n\in \N, \, 2^{n}  \geq t} 2^{dn} \,\P \left[\mathcal Y \geq 2^{(n-1)s+d} \right] & \leq C\sum_{n\in \N, \, 2^{n} \geq t} 2^{dn}  \exp\left(-2^{(n-1)s+d} \right) \\
& \leq C \exp\left( -2t^s \right).
\end{align*}
It follows then that 
\begin{align*}
\E[\exp(\X^{s})]&=s\int_{0}^{\infty} t^{s-1}\exp(t^{s})\P[\X>t]\, dt\\
&\leq sC\int_{0}^{\infty}t^{s-1} \exp(-t^{s}) \,dt\leq C.
\end{align*}
This yields~\eqref{e.expC1m}.

\smallskip

\emph{Step 2.}
We reduce the proposition to the claim that, for every $R\geq C$,
\begin{equation}
\label{e.domclam2}
\left\{ A\in \Omega \,:\, \sup_{0<\ep\leq 1} \sup_{x\in \Rd} \left( G_\ep(x,0; A) - \varphi_R(x) \right)  >0 \right\} \subseteq \left\{ A \in\Omega \,:\, \X(A) > R \right\}.
\end{equation}
If~\eqref{e.domclam2} holds, then by~\eqref{e.varphimax} we have
\begin{multline*}
\left\{ A\in \Omega \,:\,  \sup_{0<\ep\leq1}\sup_{x\in\Rd} \left( G_\ep(x,0; A) - C R^{d-2+\gamma} \left( 1+ |x|\right)^{2-d}\right) >0  \right\} \\
 \subseteq \left\{ A\in\Omega\,:\, \X(A) > R \right\}.
\end{multline*}
However this implies that, for every $R\geq C$, $0<\ep \leq 1$ and $x\in\Rd$,
\begin{equation*} \label{}
G_\ep(x,0) \leq C \X^{d-2+\gamma} \left(1+ |x|\right)^{2-d}.
\end{equation*}
Setting~$\delta := 1-\gamma \geq c(d,\lambda,\Lambda)>0$, we obtain~\eqref{e.Greenest2}.

\smallskip

\emph{Step 3.} We prove~\eqref{e.domclam2}. Fix $A\in \Omega$, $0< \ep\leq 1$ and $R\geq 10\sqrt{d}$ for which 
\begin{equation*} \label{}
\sup_{\Rd} \left( G_\ep(\cdot,0) - \varphi_R \right) > 0. 
\end{equation*}
The goal is to show that $\X \geq R$, at least if $R\geq C$. We do this by exhibiting $|z| > R$ and functions $u$ and $v$ satisfying~\eqref{e.X1defp1}, but not~\eqref{e.X1defp2}. 

\smallskip

As the functions $G_\ep(\cdot,0)$ and $\varphi_R$ decay at infinity (c.f. Lemma~\ref{l.Gtails}), there exists a point $x_0 \in \Rd$ such that 
\begin{equation*} \label{}
G_\ep(x_0,0) - \varphi_R(x_0) = \sup_{\Rd} \left( G_\ep(\cdot,0) - \varphi_R \right) > 0.
\end{equation*}
By the maximum principle and~\eqref{e.eqvarphiR}, it must be that $|x_0| \geq R$. By~\eqref{e.belowhalf},
\begin{equation} \label{e.Greent1}
G_\ep(x_0,0) - \psi_R(x_0) = \sup_{B_{|x_0|/2}(x_0)} \left( G_\ep(\cdot,0) - \psi_R \right).
\end{equation}
We perturb $\psi_R$ by setting $\widetilde \psi_R (x):= \psi_R(x) + c|x_0|^{-2-\beta} \psi_R(x_0)|x-x_{0}|^{2}$ which, in view of~\eqref{e.psiR}, satisfies
\begin{equation*} \label{}
-\Delta \widetilde \psi_R \geq 0 \quad \mbox{in} \  B_{|x_0|/2}(x_0).
\end{equation*}
The perturbation improves~\eqref{e.Greent1} to
\begin{equation*} 
G_\ep(x_0,0) - \tilde \psi_R(x_0) \geq \sup_{\partial B_{|x_0|/2}(x_0)} \left( G_\ep(\cdot,0) - \tilde \psi_R \right) + c|x_0|^{-\beta} \psi_R(x_0).
\end{equation*}
If $R\geq C$, then we may take $z_0 \in \Zd$ to be the nearest lattice point to $x_0$ such that $|z_0| > |x_0|$ and get $x_0\in B_{|z_0| /4}(z_{0})$. Since $\psi(|x|)$ is decreasing in $|x|$, this implies
\begin{equation*}
G_\ep(x_0,0) - \tilde \psi_R(x_0) \geq \sup_{\partial B_{|z_0|/4}(z_0)} \left( G_\ep(\cdot,0) - \tilde \psi_R \right) + c|z_0|^{-\beta} \psi_R(z_0).
\end{equation*}
In view of~\eqref{e.hesspsib}, this gives
\begin{equation*} \label{}
G_\ep(x_0,0) - \tilde \psi_R(x_0) \geq \sup_{\partial B_{|z_0|/4}(z_0)} \left( G_\ep(\cdot,0) - \tilde \psi_R \right) + c \Gamma |z_0|^{-\beta},
\end{equation*}
where 
\begin{equation*} \label{}
\Gamma: = \osc_{\partial B_{|z_0|/4}(z_0)} \tilde \psi_R + |z_0| \big[ \tilde \psi_R \big]_{C^{0,1}(\partial B_{|z_0|/4}(z_0))}.
\end{equation*}
We have thus found functions satisfying~\eqref{e.X1defp1} but in violation of~\eqref{e.X1defp2}. That is, we deduce from the definition of $\mathcal Y_z$ that  $\mathcal Y_{z_0} \geq c|z_0|^{s+\alpha-\beta}-C$ and, in view of the fact that $\beta =\frac12\alpha< \alpha$ and $|z_0|>R$, this implies that $\mathcal Y_{z_0} \geq 2^d|z_0|^{s}$ provided $R \geq C$. Hence $\X \geq |z_0| > R$. This completes the proof of~\eqref{e.domclam2}. 
\end{proof}

\subsection{Dimension two}

The argument for~\eqref{e.Greenest} in two dimensions follows along similar lines as the proof when $d\geq 3$, however the normalization of $G_\ep$ is more tricky since it depends on $\ep$. 

\begin{lemma}\label{l.testfcnd2}
Let $s\in (0, 2)$. Then there exist constants $C, c, \gamma, \beta>0$, depending only on $(s, d, \la, \La, \ell)$, and a family of continuous functions $\left\{\vp_{R, \ep}\right\}_{R\geq C, \ep\leq c}$ satisfying the following: (i) for every $R\geq C$,
\begin{equation} \label{e.varphiepbnds2}
\varphi_{R,\ep}(x) \leq CR^\gamma \log\left( 2 + \frac{1}{\ep(1+|x|)} \right) \exp\left( - a \ep  |x| \right),
\end{equation}
(ii) there exists a smooth function $\psi_{R, \ep}$ such that 
\begin{equation*}
\left\{
\begin{aligned}
& -\Delta \psi_{R, \ep}\geq  c|x|^{-2-\beta}\psi_{R,\ep} & \mbox{in} & \ B_{2\ep^{-1}}\setminus B_{R/2}, \\
& \varphi_{R, \ep}\leq \psi_{R, \ep} & \mbox{in} & \ B_{2\ep^{-1}}\setminus B_{R/2},
\end{aligned}
\right.
\end{equation*}
 and (iii) for every $R\ge C$ and $A\in\Omega$,
\begin{equation} \label{e.eqvarphiR2}
-\tr\left(A(x) D^2 \varphi_{R, \ep} \right) \geq \chi_{B_\ell} \quad \mbox{in} \ B_R\bigcup\left(\R^{2}\setminus B_{\ep^{-1}}\right) .
\end{equation}
\end{lemma}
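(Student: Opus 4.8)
The plan is to follow the scheme of \lref{testfcnd3}, inserting an extra ``screened logarithm'' layer to account for the fact that in two dimensions the fundamental solution of the Laplacian grows logarithmically (rather than decaying) and that the zeroth-order term $\ep^2\phi$ screens the equation at the length scale $\ep^{-1}$. Accordingly, I would construct $\varphi_{R,\ep}$ as a continuous, radially decreasing function patched from three pieces, one for each of the three relevant regimes: the microscopic/correlation regime $\{|x|\le R\}$, the homogenization regime $\{R\le|x|\le\ep^{-1}\}$, and the massive regime $\{|x|\ge\ep^{-1}\}$. Throughout, $2\beta:=\alpha$ is the exponent from \pref{subopt} with $\sigma=1$, $\gamma:=\max\{\tfrac12,\,1-\tfrac{\lambda}{2\Lambda}\}$, $h:=\tfrac2\lambda(2\ell)^{2-\gamma}$, $a:=(2\Lambda)^{-1/2}$, and $c_0=c_0(d,\lambda,\Lambda,\ell)>e$ is a fixed constant.

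On the inner region I would take the very same concave cap as in \lref{testfcnd3}, namely $\varphi_{R,\ep}(x)=m_{R,\ep}-\tfrac{h}{\gamma}(\ell^2+|x|^2)^{\gamma/2}$ for $|x|\le R$. Since the Hessian of this function does not see the additive constant $m_{R,\ep}$, the verification of \eqref{e.eqvarphiR2} inside $B_R$ is word-for-word Step~4 of \lref{testfcnd3}: only the factor $(d-1)\ge1$ and the choice of $h$ (which makes $-\tr(A D^2\varphi_{R,\ep})\ge1$ on $B_\ell$) are used. On the homogenization region I would take $\varphi_{R,\ep}(x)=k_{R,\ep}\log\!\big(c_0\ep^{-1}/|x|\big)\exp\!\big(-\tfrac1\beta|x|^{-\beta}\big)$ for $R\le|x|\le\ep^{-1}$, and let $\psi_{R,\ep}$ be this same profile extended smoothly to all of $B_{2\ep^{-1}}\setminus B_{R/2}$. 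The two-dimensional analogue of the computation in Step~3 of \lref{testfcnd3} gives, for $x\ne0$,
\begin{equation*}
-\Delta\psi_{R,\ep}(x)=|x|^{-2-\beta}\left(\frac{2}{\log(c_0\ep^{-1}/|x|)}+\beta-|x|^{-\beta}\right)\psi_{R,\ep}(x),
\end{equation*}
and since $\log(c_0\ep^{-1}/|x|)>0$ on $B_{2\ep^{-1}}$ (by $c_0>2$) the bracket exceeds $\tfrac\beta2$ as soon as $R\ge C$ (so that $|x|^{-\beta}\le\tfrac\beta2$ there); this is exactly (ii). On the massive region I would take $\varphi_{R,\ep}(x)=k_{R,\ep}'\exp(-a\ep|x|)$ for $|x|\ge\ep^{-1}$; by the computation \eqref{e.detsupersol} in the proof of \lref{Gtails} this is a supersolution of the full equation $\ep^2 v-\tr(A D^2 v)=\chi_{B_\ell}$ on $\R^2\setminus B_{\ep^{-1}}$ (note $B_\ell\subset B_{\ep^{-1}}$ for $\ep\le c$), which together with $\varphi_{R,\ep}\ge0$ is what one needs in order to rule out, via the maximum principle, the maximum of $G_\ep-\varphi_{R,\ep}$ lying in $\R^2\setminus B_{\ep^{-1}}$ --- this is the role played by \eqref{e.eqvarphiR2} in the proof of \pref{Green}.

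It remains to fix the constants, in the order $k_{R,\ep}$, then $k_{R,\ep}'$ and $m_{R,\ep}$: $k_{R,\ep}$ is chosen, exactly as $k_R$ in \lref{testfcnd3}, so that the homogenization profile is decreasing near $|x|=\ep^{-1}$, while $k_{R,\ep}'$ and $m_{R,\ep}$ are chosen to make $\varphi_{R,\ep}$ continuous across the spheres $|x|=\ep^{-1}$ and $|x|=R$. A short computation (using $c_0$ a fixed constant slightly larger than $e$, and $a=(2\Lambda)^{-1/2}$) then shows $\psi_{R,\ep}\ge\varphi_{R,\ep}$ throughout $B_{2\ep^{-1}}\setminus B_{R/2}$, in particular on the two transition shells. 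Tracking these choices yields $k_{R,\ep},k_{R,\ep}'\le CR^\gamma$ and $m_{R,\ep}\le CR^\gamma\log(2+\ep^{-1})$, and then \eqref{e.varphiepbnds2} follows by elementary estimation: on $\{|x|\le R\}$ from $\varphi_{R,\ep}\le m_{R,\ep}$; on $\{R\le|x|\le\ep^{-1}\}$ from $\log(c_0\ep^{-1}/|x|)\le C\log\big(2+\tfrac1{\ep(1+|x|)}\big)$ and $e^{-a\ep|x|}\ge c$; and on $\{|x|\ge\ep^{-1}\}$ directly, absorbing constants into $C$. I would not require the patching at $|x|=R$ and $|x|=\ep^{-1}$ to preserve the supersolution property, because in the proof of \pref{Green} the strict inequality \eqref{e.eqvarphiR2} is invoked only in the open regions $B_R$ and $\R^2\setminus B_{\ep^{-1}}$ (to push the maximum of $G_\ep-\varphi_{R,\ep}$ off of them), whereas on the spheres themselves one argues with the smooth comparison function $\psi_{R,\ep}$.

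The main obstacle, relative to the three-or-more-dimensional case, is the homogenization layer. There the profile must simultaneously carry the logarithmic size imposed by \eqref{e.varphiepbnds2} --- which is precisely what makes $\varphi_{R,\ep}$ dominate the modified Green's function near the origin, whose size is of order $|\log\ep|$ --- interpolate continuously between the microscopic cap and the exponentially screened tail at the two ends, and be a genuine $|x|^{-2-\beta}$-supersolution of $-\Delta$ over the dyadic range $[R/2,2\ep^{-1}]$, which may be many scales wide. The genuinely new feature, absent from the $\ep$-independent barrier $k_R|x|^{2-d}\exp(-\tfrac1\beta|x|^{-\beta})$ of \lref{testfcnd3}, is that both the normalization $k_{R,\ep}$ and the cut-off radius $c_0\ep^{-1}$ depend on $\ep$; consequently almost all the work is in the bookkeeping of the continuity and ordering constants and in checking that the two differential inequalities survive the gluing. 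The remainder of the argument is a routine transcription of \lref{testfcnd3}.
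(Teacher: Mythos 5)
Your proposal reproduces the paper's proof essentially step for step: the same three-region barrier (concave cap in $B_R$, a screened-logarithm layer on $R\le|x|\le\ep^{-1}$, an exponential tail in $\R^2\setminus B_{\ep^{-1}}$), the same inner computation carried over verbatim from \lref{testfcnd3}, the same Laplacian identity on the middle profile, and the same radial-derivative comparison on the two transition shells to obtain $\varphi_{R,\ep}\le\psi_{R,\ep}$. Your middle profile $k_{R,\ep}\log(c_0\ep^{-1}/|x|)\exp(-\tfrac1\beta|x|^{-\beta})$ is the paper's $k_R\big(\tfrac1a e^a+|\log\ep|-\log|x|\big)\exp(-\tfrac1\beta|x|^{-\beta})$ with $c_0=\exp(e^a/a)$, your formula for $-\Delta\psi_{R,\ep}$ is the correct one, and your reading of (iii) on $\R^2\setminus B_{\ep^{-1}}$ as the full-equation supersolution property coming from \lref{Gtails} is exactly the interpretation used in the proof of \pref{Green}.
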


\begin{proof}
Throughout we assume $d=2$, we fix $s\in (0,2)$ and let $C$ and $c$ denote positive constants which depend only on $(s,\lambda,\Lambda,\ell)$ and may vary in each occurrence. We roughly follow the outline of the proof of~\eqref{e.Greenest} above in the case of $d\geq 3$. 

\smallskip

\emph{Step 1.} The definition of $\varphi_R$. For $\ep\in (0,\frac12]$, $4\ell\leq R\leq \ep^{-1}$  and $x\in\R^2$, we set
\begin{equation*} \label{}
\varphi_{R,\ep}(x):= \left\{ \begin{aligned}
& m_{R,\ep} - \frac{h}{\gamma} \left( \ell^2 + |x|^2 \right)^{\frac\gamma2}, && 0\leq |x| \leq R, \\
& k_{R}\left( \frac1a \exp(a) + \left| \log \ep \right| - \log |x| \right)  \exp \left( -\frac1\beta |x|^{-\beta} \right),  && R< |x| \leq \frac 1\ep, \\
& b_{R,\ep} \exp\left( -a\ep |x|\right), &&  |x| > \frac 1\ep,
\end{aligned} \right.
\end{equation*}
where the constants are defined as follows:
\begin{align*}
 a & : = a(\lambda,\Lambda)  \ \ \mbox{is the constant from Lemma~\ref{l.Gtails},}\\
2\beta & := \lefteqn{ \alpha(s,\lambda,\Lambda,\ell) > 0 \ \ \mbox{is the exponent from Proposition~\ref{p.subopt} with $\sigma=1$,}} \\
 \gamma & := \max\left\{ \frac12 , 1-\frac{\lambda}{2\Lambda} \right\},  \\ 
 h &:= \frac{2}{\lambda} (2\ell)^{2-\gamma},\\
 k_R & := 2h \exp\left(\frac1\beta 2^\beta R^{-\beta} \right) R^{\gamma} , \\
 m_{R,\ep} & :=  \frac{h}{\gamma} \left( \ell^2 + R^2 \right)^{\frac\gamma2} + k_R \left( \frac1a\exp(a) + \left| \log \ep \right| - \log R \right) \exp\left(-\frac1\beta R^{-\beta} \right),\\
 b_{R,\ep} & :=\frac{1}{a}k_R \exp\left(2a -\frac1\beta \ep^{\beta} \right)
\end{align*}
Observe that
\begin{equation} \label{e.easy2bnds}
k_R \leq CR^\gamma, \quad m_{R,\ep} \leq C R^\gamma \left( 1+ \left| \log \ep \right| - \log R \right) \quad \mbox{and} \quad b_{R,\ep} \leq CR^\gamma.
\end{equation}

\smallskip

\emph{Step 2.} We show that, for every $\ep \in(0, \frac12]$, $4\ell \leq R\leq \ep^{-1}$ and $x\in\R^2$,~\eqref{e.varphiepbnds2} holds. This is relatively easy to check from the definition of $\varphi_{R,\ep}$, using~\eqref{e.easyexp} and~\eqref{e.easy2bnds}. For $x\in B_R$, we use $\varphi_{R,\ep} \leq m_{R,\ep}$,~\eqref{e.easy2bnds} and $\exp(-a\ep R) \geq c$ to immediately obtain~\eqref{e.varphiepbnds2}. For $x\in B_{\ep^{-1}}\setminus B_R$, the estimate is obtained from the definition of $\varphi_{R,\ep}$, the bound for $k_R$ in~\eqref{e.easy2bnds} and~\eqref{e.easyexp}. For $x\in \mathbb{R}^{2}\setminus B_{\ep^{-1}}$, the logarithm factor on the right side of~\eqref{e.varphiepbnds2} is $C$ and we get~\eqref{e.varphiepbnds2} from the bound for $b_{R,\ep}$ in~\eqref{e.easy2bnds}.

\smallskip

\emph{Step 3.} We show that, for $\ep \in (0,c]$ and $R\geq C$, we have
\begin{multline} \label{e.midrngs}
\varphi_{R,\ep} (x) \leq \psi_{R,\ep} (x) := k_{R}\left( \frac1a\exp(a) + \left| \log \ep \right| - \log |x| \right)  \exp \left( -\frac1\beta |x|^{-\beta} \right) \\ \mbox{in} \ \frac12 R \leq |x| \leq \frac{2}{\ep}.
\end{multline}
We have equality in~\eqref{e.midrngs} for $R \leq |x| \leq \ep^{-1}$ by the definition of~$\varphi_{R,\ep}$. As $\varphi_{R,\ep}$ is radial, it therefore suffices to check that the magnitude of the radial derivative of $\varphi_{R,\ep}$ is less than (respectively, greater than) than that of $\psi_{R,\ep}$ in the annulus $\{ R/2\leq |x| \leq R\}$ (respectively, $\{ \ep^{-1} \leq |x| \leq 2\ep^{-1}\}$). This is ensured by the definitions of $k_R$ and $b_{R,\ep}$, as the following routine computation verifies: first, in $x\in B_R \setminus B_{R/2}$, we have 
\begin{align*}
\left| D\varphi_{R,\ep}(x) \right| =  h\left( \ell^2 +|x|^2 \right)^{\frac\gamma2-1} |x| < h |x|^{\gamma-1},
\end{align*}
and thus in $B_{R}\setminus B_{R/2}$, provided $R\geq C$, we have that 
\begin{align*}
\left| D\psi_{R,\ep}(x) \right| & = k_R |x|^{-1} \left| -1+|x|^{-\beta} \left( \frac1a\exp(a) + \left| \log \ep \right| - \log |x|  \right) \right|\exp\left( -\frac1\beta |x|^{-\beta} \right) \\
& > \frac{1}{2}k_R |x|^{-1} \exp\left( -\frac1\beta 2^\beta R^{-\beta}  \right) = h R^{\gamma} |x|^{-1} \geq h |x|^{\gamma-1} > \left| D\varphi_{R,\ep}(x) \right|.
\end{align*}
Next we consider $x\in B_{2\ep^{-1}} \setminus B_{\ep^{-1}}$ and estimate
\begin{align*}
\left| D\varphi_{R,\ep}(x) \right| = a\ep b_{R,\ep} \exp\left( -a\ep|x| \right) > a \ep b_{R,\ep} \exp\left( -2a \right) = 2\ep k_R \exp\left(-\frac{1}{\beta}\ep^{\beta}\right)
\end{align*}
and
\begin{align*}
\left| D\psi_{R,\ep}(x) \right| & \leq k_R |x|^{-1} \left(1+ \frac{1}{a}\exp(a)|x|^{-\beta}\right)\exp\left(-\frac{1}{\beta}\frac{\ep^{\beta}}{2^{\beta}}\right)\leq 2\ep k_{R}\exp\left(-\frac{1}{\beta}\ep^{\beta}\right), 
\end{align*}
the latter holding provided that $\ep \leq c$. This completes the proof of~\eqref{e.midrngs}.

\smallskip

\emph{Step 4.} We show that $\psi_{R,\ep}$ satisfies
\begin{equation} \label{e.psiRep}
-\Delta \psi_{R,\ep} \geq c |x|^{-2-\beta} \psi_{R,\ep}(x) \quad \mbox{in} \ C \leq |x| \leq \frac{2}{\ep}.
\end{equation}
By a direct computation, for every $x\in \R^2\setminus\{ 0\}$, we have
\begin{align*} \label{}
-\Delta \psi_{R,\ep}(x) & = |x|^{-2-\beta} \left(  \left(\beta - |x|^{-\beta}  \right) \psi_{R,\ep}(x) + k_R\left(\frac{1}{|x|^{2}}+1\right) \exp\left( -\frac1\beta|x|^{-\beta} \right) \right) \\
& \geq |x|^{-2-\beta}\left(\beta - |x|^{-\beta}  \right) \psi_{R,\ep}(x).
\end{align*}
From $\beta \geq c$ and the definition of~$\psi_{R,\ep}$, we see that $\psi_{R,\ep} >0$ and $(\beta-|x|^{-\beta}) \geq c$ for every $\beta^{-1/\beta} \leq |x| \leq 2\ep^{-1}$. This yields~\eqref{e.psiRep}.

For future reference, we note that, for every $|x| \leq 2\ep^{-1}$,
\begin{equation} \label{e.blaggber}
|x|^{-2} \osc_{B_{|x|/2}(x)} \psi_{R,\ep} + \sup_{y\in B_{|x|/2}(x)} \left( |y|^{-1} D\psi_{R,\ep}(y) \right) \leq C |x|^{-2} \leq C |x|^{-2} \psi_{R,\ep}(x).
\end{equation}

\smallskip

\emph{Step 5.} We check ~\eqref{e.eqvarphiR2} by checking that for every $A\in \Omega$, 
\begin{equation}\label{e.eqvarphiR2*}
-\tr\left(A(x) D^2 \varphi_{R,\ep} \right) \geq \chi_{B_{\ell}} \quad \mbox{in} \ B_R \cup (\R^2\setminus B_{\ep^{-1}}).
\end{equation}
In fact, for $B_R$, the computation is identical to the one which established~\eqref{e.eqvarphiR}, since our function $\varphi_{R,\ep}$ here is the same as $\varphi_R$ (from the argument in the case $d>2$) in $B_R$, up to a constant. Therefore we refer to Step 5 in Lemma~\ref{l.testfcnd3}~for details. In $\R^{2}\setminus B_{\ep^{-1}}$, $\varphi_{R, \ep}$ is a supersolution by the proof of Lemma \ref{l.Gtails} and by the choice of $a$. 

 We remark that in the case that $R= \ep^{-1}$, the middle annulus in the definition of $\varphi_{R,\ep}$ disappears, and we have that $\varphi_{R,\ep}$ is a global (viscosity) solution of~\eqref{e.eqvarphiR2*}, that is, 
\begin{equation} \label{e.eqvarphiR2spec}
-\tr\left(A(x) D^2 \varphi_{R,\ep} \right) \geq \chi_{B_{\ell}} \quad \mbox{in} \ \R^2 \quad \mbox{if} \ R = \ep^{-1}.
\end{equation}

To see why, by~\eqref{e.eqvarphiR2}, we need only check that $\varphi_{R,\ep}$ is a viscosity supersolution of~\eqref{e.eqvarphiR2} on the sphere $\partial B_R  = \partial B_{\ep^{-1}}$. However, the function $\varphi_{R,\ep}$ cannot be touched from below on this sphere, since its inner radial derivative is smaller than its outer radial derivative by the computations in Step 3. Therefore we have~\eqref{e.eqvarphiR2spec}. It follows by comparison that, for every $\ep \in (0,\frac12]$ and $x\in \R^2$,
\begin{align} \label{e.taut}
G_\ep(x,0) &\leq \varphi_{R,\ep}(x) \quad \mbox{if} \ R = \ep^{-1}\notag\\
&\leq C\ep^{-\gamma} \log\left( 2 + \frac{1}{\ep(1+|x|)} \right) \exp\left( - a \ep  |x| \right)
\end{align}

\end{proof}

\begin{proof}[Proof of Proposition~\ref{p.Green}~when $d=2$]
The proof follows very similar to the case when $d\geq 3$, using the appropriate adaptations for the new test function introduced in Lemma~\ref{l.testfcnd2}. As before, we fix $s\in (0,2)$ and let $C$ and $c$ denote positive constants which depend only on $(s, \la, \La, \ell)$. We use the notation developed in Lemma~\ref{l.testfcnd2} throughout the proof. 

\smallskip

\emph{Step 1.} We define the random variable $\X$ in exactly the same way as in \eqref{e.defscrC}, so 
\begin{equation}\label{e.defscrC2}
\X(A) := \sup \left\{ |z|  \, :\, z\in \Zd, \ \mathcal Y_z(A) \geq 2^d|z|^s \right\},
\end{equation}
The argument leading to \eqref{e.expC1m} follows exactly as before, so that 
\begin{equation*}
\E[ \exp(\X^{s}) ]\leq C(s, \la, \La, \ell)<\infty.
\end{equation*}

\smallskip

\emph{Step 2.} We reduce the proposition to the claim that, for every $R\geq C$,
\begin{multline}
\label{e.domclam22d}
\left\{ A\in \Omega \,:\, \sup_{0<\ep< \frac1R} \sup_{x\in \R^2} \left( G_\ep(x,0,A) - \varphi_{R,\ep}(x) \right)  >0 \right\} \\ \subseteq \left\{ A \in \Omega \,:\, \X (A) > R \right\}.
\end{multline}
If~\eqref{e.domclam22d} holds, then by~\eqref{e.varphiepbnds2} we have
\begin{align*}
\left\{ A \,:\,  \sup_{0<\ep<\frac1R}\ \sup_{x\in\R^2} \left( G_\ep(x,0,A) -  CR^\gamma \log\left( 2 + \frac{1}{\ep(1+|x|)} \right) \exp\left( - a \ep  |x| \right)\right) >0  \right\} \\
 \subseteq  \left\{ A\in\Omega\,:\, \X(A) > R \right\}.
\end{align*}
From this, we deduce that, for every $0<\ep < \X^{-1}$ and $x\in\R^2$,
\begin{equation} \label{e.Gep2bnd}
G_\ep(x,0) \leq C \X^{\gamma}  \log\left( 2 + \frac{1}{\ep(1+|x|)} \right) \exp\left( - a \ep  |x| \right).
\end{equation}
Moreover, if $\ep \in (0,\frac12]$ and $\ep \geq \X^{-1}$, then by ~\eqref{e.taut} we have
\begin{align*}
G_\ep(x,0) & \leq C \ep^{-\gamma}  \log\left( 2 + \frac{1}{\ep(1+|x|)} \right) \exp\left( - a \ep  |x| \right) \\
& \leq C \X^{\gamma}  \log\left( 2 + \frac{1}{\ep(1+|x|)} \right) \exp\left( - a \ep  |x| \right).
\end{align*}
Thus we have~\eqref{e.Gep2bnd} for every $\ep \in (0,\frac12]$ and $x\in\R^2$. Taking $\delta := 1-\gamma \geq c$, we obtain the desired conclusion~\eqref{e.Greenest} for $d=2$.

\smallskip

\emph{Step 3.} We prove~\eqref{e.domclam22d}. This is almost the same as the last step in the proof of~\eqref{e.domclam2} for dimensions larger than two. Fix $A\in \Omega$, $0< \ep\leq 1$ and $R\geq 2\ell$ for which 
\begin{equation*} \label{}
\sup_{\Rd} \left( G_\ep(\cdot,0) - \varphi_{R,\ep} \right) > 0. 
\end{equation*}
As in the case of dimensions larger than two, we select a point $x_0 \in \Rd$ such that 
\begin{equation*} \label{}
G_\ep(x_0,0) - \varphi_{R,\ep}(x_0) = \sup_{\Rd} \left( G_\ep(\cdot,0) - \varphi_{R,\ep} \right) > 0.
\end{equation*}
By the maximum principle and~\eqref{e.eqvarphiR2}, it must be that $R \leq |x_0| \leq \ep^{-1}$. By~\eqref{e.midrngs},
\begin{equation} \label{e.Greent12d}
G_\ep(x_0,0) - \psi_{R,\ep}(x_0) = \sup_{B_{|x_0|/2}(x_0)} \left( G_\ep(\cdot,0) - \psi_{R,\ep} \right).
\end{equation}
We perturb $\psi_{R,\ep}$ by setting $\widetilde \psi_{R,\ep} (x):= \psi_{R,\ep}(x) + c|x_0|^{-2-\beta} \psi_{R,\ep}(x_0) |x-x_0|^2$ which, in view of~\eqref{e.psiRep}, satisfies
\begin{equation*} \label{}
-\Delta \widetilde \psi_{R,\ep} \geq 0 \quad \mbox{in} \  B_{|x_0|/2}(x_0).
\end{equation*}
According to~\eqref{e.Greent1}, we have
\begin{equation*}
G_\ep(x_0,0) - \tilde \psi_{R,\ep}(x_0) \geq \sup_{\partial B_{|x_0|/2}(z_0)} \left( G_\ep(\cdot,0) - \tilde \psi_{R,\ep} \right) + c|x_0|^{-\beta} \psi_{R,\ep}(x_0).
\end{equation*}
Assuming $R\geq C$, we may take $z_0 \in \Zd$ to be the nearest lattice point to $x_0$ such that $|z_0| > |x_0|$ and deduce that $x_0\in B_{|z_0| /4}$ as well as
\begin{equation*}
G_\ep(x_0,0) - \tilde \psi_{R,\ep}(x_0) \geq \sup_{\partial B_{|z_0|/4}(z_0)} \left( G_\ep(\cdot,0) - \tilde \psi_{R,\ep} \right) + c|z_0|^{-\beta} \psi_{R,\ep}(z_0).
\end{equation*}
In view of~\eqref{e.blaggber}, this gives
\begin{equation*} \label{}
G_\ep(x_0,0) - \tilde \psi_{R,\ep}(x_0) \geq \sup_{\partial B_{|z_0|/4}(z_0)} \left( G_\ep(\cdot,0) - \tilde \psi_{R,\ep} \right) + c \Gamma |z_0|^{-\beta},
\end{equation*}
where 
\begin{equation*} \label{}
\Gamma: = |z_0|^{-2} \osc_{\partial B_{|z_0|/4}(z_0)} \tilde \psi_{R,\ep} + |z_0|^{-1} \big[ \tilde \psi_{R,\ep} \big]_{C^{0,1}(\partial B_{|z_0|/4}(z_0))}.
\end{equation*}
We have thus found functions satisfying~\eqref{e.X1defp1} but in violation of~\eqref{e.X1defp2}, that is, we deduce from the definition of $\mathcal Y_z$ that  $\mathcal Y_{z_0} \geq c|z_0|^{s+\alpha-\beta}-C$. In view of the fact that $\beta =\frac12\alpha< \alpha$ and $|z_0|>R$, this implies that $\mathcal Y_{z_0} \geq 2^d|z_0|^{s}$ provided $R \geq C$. By the definition of $\X$, we obtain $\X \geq |z_0| > R$. This completes the proof of~\eqref{e.domclam22d}. 
\end{proof}

\section{Sensitivity estimates}
\label{s.sensitivity}
In this section, we present an estimate which uses the Green's function bounds to control the vertical derivatives introduced in the spectral gap inequality (Proposition \ref{p.concentration}). Recall the notation from Proposition~\ref{p.concentration}:
\begin{equation*} \label{}
X_z' := \E_* \left[ X \,\vert\, \F_*(\Zd\setminus \{ z\}) \right] \qquad \mbox{and} \qquad \V_*\!\left[ X \right] := \sum_{z\in\mathbb{Z}^{d}} (X-X'_z)^2.
\end{equation*}
The vertical derivative $(X-X'_{z})$ measures, in a precise sense, the \emph{sensitivity} of $X$ subject to changes in the environment near $z$. We can therefore interpret $\left(X-X_{z}'\right)$ as a derivative of $X$ with respect to the coefficients near $z$. The goal then will be to understand the vertical derivative $(X-X_z')$ when $X$ is $\phi_{\ep}(x)$, for fixed $x\in \mathbb{R}^{d}$. 
\smallskip

The main result of this section is the following proposition which computes $\phi_\ep(x) - \E_*\!\left[ \phi_\ep(x) \,\vert\, \F_*(\Zd\setminus \{ z \})\right]$ in terms of the random variable introduced in Proposition \ref{p.Green}. Throughout the rest of the section, we fix $M\in \mathbb{S}^{d}$ with $|M|=1$ and let $\xi_{\ep}$ be defined as in \eqref{e.dimsep}. 

\begin{proposition}\label{p.sensitivity}
Fix $s\in (0,d)$. There exist positive constants $a(d,\lambda,\Lambda)>0$, $\delta(d,\lambda,\Lambda)>0$  and an $\F_*$--measurable random variable $\X:\Omega_* \to [1,\infty)$ satisfying
\begin{equation} \label{e.scruff}
\E \left[ \exp(\X^s) \right] \leq C(s,d,\lambda,\Lambda,\ell) < \infty
\end{equation}
such that, for every $\ep\in \left(0,\tfrac12\right]$, $x\in\Rd$ and $z\in \Zd$, 
\begin{equation}\label{e.Vbound1}
\left| \phi_\ep(x)\right.\left. - \E_*\!\left[ \phi_\ep(x) \,\vert\, \F_*(\Zd\setminus \{ z \}) \right] \right|
\leq (T_z\X)^{d+1-\delta}  \xi_{\ep}(x-z).
\end{equation} 
\end{proposition}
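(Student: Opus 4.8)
\emph{Overview and Step 1 (reduction by resampling).} The plan is to take the random variable $\X$ in \pref{sensitivity} of the form $\X:=C\max\{\X_{\mathrm G},\X_{\mathrm R}\}$, where $\X_{\mathrm G}$ is the $\pi$--pullback of the random variable from \pref{Green} and $\X_{\mathrm R}$ is a stationary $\F_*$--measurable regularity random variable constructed below, the constant $C$ and a slight loss in $s$ being used to absorb multiplicative constants at the end. Viewing $X:=\phi_\ep(x;\pi(\omega))$ as a random variable on $(\Omega_*,\F_*,\P_*)$, the product structure \eqref{starform} together with \eqref{pullback} shows that $\E_*[X\mid\F_*(\Zd\setminus\{z\})]$ is the average of $X$ over an independent resampling of the single coordinate $\omega(z)$. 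Hence, if $\hat\omega$ denotes $\omega$ with $\omega(z)$ replaced by an arbitrary value and $A:=\pi(\omega)$, $A':=\pi(\hat\omega)$, then
\[
\bigl|\phi_\ep(x)-\E_*[\phi_\ep(x)\mid\F_*(\Zd\setminus\{z\})]\bigr|\le\sup\,\bigl|\phi_\ep(x;A)-\phi_\ep(x;A')\bigr|,
\]
the supremum over all such resamplings. By \eqref{siginclusion}, modifying $\omega(z)$ changes the coefficient field only inside $\bar B_{\ell/2}(z)$, so $A\equiv A'$ on $\Rd\setminus\bar B_{\ell/2}(z)$; note that the field $A=\pi(\omega)$ appearing below is \emph{not} resampled, so only the regularity input will need to be made uniform over resamplings. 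It thus suffices to bound $|\phi_\ep(x;A)-\phi_\ep(x;A')|$ for a pair $A,A'\in\Omega$ agreeing off $\bar B_{\ell/2}(z)$.

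\emph{Step 2 (a localized equation and the Green's function).} Since \eqref{approxcorrector} is linear in $\phi_\ep$, the bounded function $w:=\phi_\ep(\cdot;A)-\phi_\ep(\cdot;A')$ solves
\[
\ep^2 w-\tr\!\bigl(A(x)D^2w\bigr)=f\ \text{ in }\Rd,\qquad f:=\tr\!\bigl((A-A')(M+D^2\phi_\ep(\cdot;A'))\bigr),
\]
with $f$ supported in $\bar B_{\ell/2}(z)$. Because $\bar B_{\ell/2}(z)\subset B_\ell(z)$, any $K$ bounding $f$ in the appropriate sense on $\bar B_{\ell/2}(z)$ makes $K\,G_\ep(\cdot,z;A)$ a supersolution of $\ep^2 v-\tr(AD^2v)=\pm f$; comparing with $\pm w$ via the maximum principle for bounded solutions of this (massive, uniformly elliptic) equation — both $w$ and $G_\ep(\cdot,z;A)$ decay at infinity by the barrier in \lref{Gtails} — gives a bound $|w(x)|\le C\,K\,G_\ep(x,z;A)$. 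For $x$ near $z$ one instead uses the Aleksandrov--Bakelman--Pucci estimate (which requires only an $L^d$ bound on $f$) and then propagates outward by the Green's function; together with the smearing of the singularity in \eqref{GF}, this reduces the control of $f$ to an $L^d$ (indeed $L^1$ away from $z$) average on $\bar B_{\ell/2}(z)$ rather than a pointwise bound.

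\emph{Step 3 (controlling $f$ and concluding).} On $\bar B_{\ell/2}(z)$ we have $|f|\le C(1+|D^2\phi_\ep(\cdot;A')|)$ since $|A-A'|\le\Lambda$ and $|M|=1$. The corrector $u:=\phi_\ep(\cdot;A')$ solves $-\tr(A'(M+D^2u))=-\ep^2 u$ with $\|\ep^2u\|_{L^\infty}\le C$ (comparison with constant barriers gives $\|\phi_\ep\|_{L^\infty}\le C\ep^{-2}$) and with Hölder norm of the right side controlled by the Krylov--Safonov estimate \eqref{classicKS}. Applying the coarsened $C^{1,1}$ estimate \eqref{pwcC11} to $u$ (and its translates to the finitely many lattice points near $z$) with $R\simeq\ep^{-1}$, for which all lower-order terms in \eqref{pwcC11} are $O(1)$, together with interior estimates on the fixed scale $\ell$, and falling back on $|D^2u|\lesssim\ep^{-2}\lesssim(T_z\X_{\mathrm R})^2$ in the regime $\ep^{-1}\gtrsim T_z\X_{\mathrm R}$ where the choice $R\simeq\ep^{-1}$ is unavailable, one obtains the required control of $f$ of size $C(T_z\X_{\mathrm R})^2$. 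Finally \pref{Green} and stationarity give $G_\ep(x,z;A)\le(T_z\X_{\mathrm G})^{d-1-\delta}\xi_\ep(x-z)$ with $\xi_\ep$ as in \eqref{dimsep}. Combining and using $2+(d-1-\delta)=d+1-\delta$ yields $|w(x)|\le(T_z\X)^{d+1-\delta}\xi_\ep(x-z)$; the bound $\E[\exp(\X^s)]\le C$ follows from the corresponding bounds for $\X_{\mathrm G}$ and $\X_{\mathrm R}$ after decreasing $s$ slightly within $(0,d)$.

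\emph{Main obstacles.} The first delicate point is the construction of $\X_{\mathrm R}$: Step 3 uses the regularity of $\phi_\ep(\cdot;A')$ near $z$, and $A'$ differs from $A=\pi(\omega)$ near $z$ through the resampled coordinate $\omega(z)$, so $\X_{\mathrm R}$ must dominate the $C^{1,1}$ regularity scale of $\phi_\ep(\cdot;\pi(\hat\omega))$ near the origin \emph{uniformly over all single-coordinate resamplings} $\hat\omega$, while still obeying a stretched-exponential moment bound. This rests on a stability statement: a bounded modification of the coefficient field in a region of diameter $\lesssim\ell$ perturbs the large-scale homogenization rate — hence the random variable of \pref{subopt} that underlies \tref{regularity} — only by a bounded factor. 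The second, and arguably the heart of the matter, is the control of $f$ in Step 3: bounding a quantity involving the Hessian of a corrector by a power of the regularity random variable using only the \emph{coarsened} $C^{1,1}$ theory, so that the estimate does not depend on the unquantified Hölder modulus in \eqref{Fcont}. This is the analogue, in the present nondivergence setting, of the use of the Caccioppoli (energy) inequality to control spatial averages of $|\nabla\phi_\ep|$ in the divergence-form arguments of~\cite{GO1,GNO1,AS2}.
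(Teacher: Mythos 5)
Your overall strategy matches the paper's: identify the conditional expectation via resampling a single coordinate, exploit the $\ell$-range-of-dependence to localize the change in the coefficient field to $B_{\ell/2}(z)$, write a massive linear equation for the difference of correctors with compactly supported source, compare against the modified Green's function, and combine the $C^{1,1}$ regularity power $2$ with the Green's function power $d-1-\delta$ to reach $d+1-\delta$. But there is a genuine gap at the heart of Step 3. You reduce the control of the source $f = \tr\bigl((A-A')(M + D^2\phi_\ep(\cdot;A'))\bigr)$ to an $L^d$ (or $L^1$) average via the Aleksandrov--Bakelman--Pucci estimate. The coarsened seminorm $\left[\phi_\ep\right]_{C^{1,1}_1(z,B_{1/\ep}(z))}$ furnished by Theorem~\ref{t.regularity} does \emph{not} control $\|D^2\phi_\ep\|_{L^d(B_{\ell/2}(z))}$: a function that is affine plus a bounded high-frequency oscillation on unit scales has bounded coarsened seminorm but arbitrarily large $\|D^2\cdot\|_{L^d}$, and in nondivergence form there is no Caccioppoli-type inequality that would upgrade $L^\infty$ control of the function to integral control of the Hessian. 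The "fall back on $|D^2 u|\lesssim\ep^{-2}$" is likewise not available: the interior estimates that would yield a pointwise Hessian bound depend on the (deliberately unquantified) H\"older modulus in~\eqref{e.Fcont}. This is exactly the "glitch" the paper identifies explicitly before Lemma~\ref{l.coarseABP}. The actual resolution is a touching-point argument: if $\phi_\ep - \phi_\ep' - K G_\ep'$ attains an interior maximum, one uses the coarsened seminorm to replace $\phi_\ep$ on $B_\ell(z)$ by an affine function minus a quadratic penalty $\nu|x-x_0|^2$, obtains a new touching point for a smooth perturbation, and reads off a lower bound on $K$ directly from the viscosity supersolution property of $\phi_\ep'$ and $G_\ep'$ — entirely avoiding any pointwise or $L^d$ control of $D^2\phi_\ep$. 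This is what Lemma~\ref{l.coarseABP} accomplishes, and without it, or some substitute, the comparison step does not close.

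A secondary issue is the choice of split. You place $A$ in the principal part, so the source involves $D^2\phi_\ep(\cdot;A')$ and the regularity random variable must be controlled \emph{for the resampled field}. You then bound the conditional expectation by a supremum over resamplings, forcing $\X_{\mathrm R}$ to dominate all single-site modifications — hence the stability claim you flag as an obstacle. The paper instead places $A'$ in the principal part, so the source involves $D^2\phi_\ep$ (un-resampled) and the regularity variable is literally $T_z\X(\omega)$; the resampling dependence then sits only in $G_\ep'$, and rather than taking a supremum the paper simply integrates the bound in $\omega'$ (as the resampling identity~\eqref{e.resampling} does) and controls the resulting $\Y_*:=\E_*'\!\left[\Y(\theta'_0)^{d-1-\delta}\right]^{1/(d-1-\delta)}$ by Jensen. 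Your variant could be rescued the same way (integrate rather than take a sup, apply Jensen to the resampled regularity variable), at which point the stability claim becomes unnecessary; as written, it introduces work the paper avoids for free.
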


Before beginning the proof of Proposition~\ref{p.sensitivity}, we first provide a heuristic explanation of the main argument. We begin with the observation that we may identify the conditional expectation $\E_*\!\left[ X \,\vert\, \F_*(\Zd \setminus \{ z \}) \right]$ via resampling in the following way. Let $(\Omega_*',\F_*', \P_*')$ denote an independent copy of $(\Omega_*,\F_*, \P_*)$ and define, for each $z\in\Zd$, a map
\begin{equation*} \label{}
\theta_z':\Omega\times\Omega'\to \Omega
\end{equation*}
by
\begin{equation*} \label{}
\theta_z'(\omega,\omega')(y):= \left\{ \begin{aligned} & \omega(y) & \mbox{if} \ y\neq z,\\
& \omega'(z) &  \mbox{if} \ y=z.
\end{aligned}\right.
\end{equation*}
It follows that, for every $\omega\in\Omega$,
\begin{equation} \label{e.resampling}
\E_*\left[ X \,\vert\, \F_*(\Zd\setminus \{ z \}) \right](\omega) = \E_*'\left[  X(\theta_z'(\omega,\cdot))\right].
\end{equation}

Therefore, we are interested in estimating differences of the form $X(\omega) - X( \theta'_z(\omega,\omega'))$, which represent the expected change in $X$ if we resample the environment at~$z$. Observe that, by ~\eqref{e.siginclusion}, if $\omega,\omega' \in\Omega_*$, $z\in \Rd$, and $A := \pi(\omega)$ and $A':=\pi(\theta_z'(\omega,\omega'))$, then
\begin{equation} \label{e.AtotA}
A \equiv A'\quad \mbox{in} \ \Rd\setminus B_{\ell/2}(z).
\end{equation}
Denote by~$\phi_\ep$ and~$\phi_\ep'$ the corresponding approximate correctors with modified Green's functions~$G_\ep$ and~$G_\ep'$. Let $w:= \phi_\ep -  \phi_\ep'$. Then we have
\begin{align*} \label{}
\ep^2 w - \tr\left(  A'(x) D^2w \right) = \tr\left( \left( A(x) -  A'(x) \right)\right.&\left.(M+D^2\phi_\ep) \right)  \\
&\leq d\Lambda\left(1+ \left| D^2 \phi_\ep(x) \right|\right) \chi_{B_{\ell/2}(z)}(x),
\end{align*}
where~$\chi_E$ denotes the characteristic function of a set~$E\subseteq\Rd$. By comparing~$w$ to~$G_\ep'$, we deduce that, for $C(d,\lambda,\Lambda)\geq1$,
\begin{equation} \label{e.rescomp}
\phi_\ep(x) -  \phi_\ep'(x) \leq C(1+\left[ \phi_\ep \right]_{C^{1,1}(B_{\ell/2}(z))}) G_\ep'(x,z).
\end{equation}

If $\phi_{\ep}$ satisfied a $C^{1,1}$ bound, then by~\eqref{e.pwcC11} and ~\eqref{e.Greenest}, we deduce that
\begin{equation*} \label{}
\phi_\ep(0) -  \phi_\ep'(0) \leq C \left((T_z\X)(\omega)\right)^2 \left((T_z\Y)(\theta'_z(\omega,\omega'))\right)^{d-1-\delta} \xi_{\ep}(z),
\end{equation*}
for $\X$ defined as in \eqref{e.pwcC11}, $\Y$ defined as in \eqref{e.Greenest}, and $\xi_{\ep}(z)$ defined as in~\eqref{e.dimsep}.

Taking expectations of both sides with respect to $\P_*'$, we obtain
\begin{equation} \label{e.wtssense}
 \phi_\ep(0) - \E_*\!\left[ \phi_\ep(0) \,\vert\, \F_*(\Zd\setminus \{ z \}) \right] \leq C (T_z\X)^2 (T_z\Y_*)^{d-1-\delta}\xi_{\ep}(z),
\end{equation}
where 
\begin{equation} \label{e.Ysrv}
\Y_*:= \E_*' \left[ \Y(\theta'_0(\omega,\omega'))^{d-1-\delta} \right]^{1/(d-1-\delta)}.
\end{equation}

Jensen's inequality implies that the integrability of $\Y_*$ is controlled by the integrability of $\Y$. First, consider that, for $s\geq d-1-\delta$, by the convexity of $t\mapsto \exp(t^{r})$ for $r\geq 1$, we have
\begin{align*}
\E \left[  \exp\left( \Y_*^s \right) \right] & = \E \left[ \exp\left( \E_*' \left[ \Y(\theta'_0(\omega,\omega'))^{d-1-\delta} \right]^{s/(d-1-\delta)} \right) \right]  \\
& \leq \E \left[ \E_*' \left[ \exp\left( \Y(\theta'_0(\omega,\omega'))^{s} \right) \right] \right]  \\
& = \E \left[  \exp\left( \Y^s \right) \right].
\end{align*}
Integrability of lower moments for $s\in (0, d-1-\delta)$ follows by the bound
\begin{equation*}
\E \left[  \exp\left( \Y_*^s \right) \right] =\E \left[  \exp\left( \Y_*^{d-1-\delta} \right) \right] \sup\left|\exp\left( -\Y_{*}^{d-1-\delta}+\Y_{*}^{s}\right)\right|\leq \E \left[  \exp\left( \Y_*^{d-1-\delta} \right) \right]
\end{equation*}
by the monotonicity of the map $p\mapsto x^{p}$ for $p\geq 0$ and $x\geq 1$, which we can take without loss of generality by letting $\Y_{*}=\Y_{*}+1$. We may now redefine $\X$ to be $\X+\Y_*$ to get one side of the desired bound~\eqref{e.Vbound1}. The analogous bound from below is obtained by exchanging $M$ for $-M$ in the equation for $\phi_\ep$, or by repeating the above argument and comparing~$w$ to~$-G_\ep'$.

\smallskip

The main reason that this argument fails to be rigorous is technical: the quantity $\left[ \phi_\ep \right]_{C^{1,1}( B_{\ell/2}(z))}$ is not actually controlled by Theorem~\ref{t.regularity}, rather we have control only over the coarsened quantity $\left[ \phi_\ep \right]_{C^{1,1}_{1}( B_{\ell/2}(z))}$. Most of the work in the proof of Proposition~\ref{p.sensitivity} is therefore to fix this glitch by proving that~\eqref{e.rescomp} still holds if we replace the H\"older seminorm on the right side by the appropriate coarsened seminorm. This is handled by the following lemma, which we write in a rather general form:\begin{lemma}
\label{l.coarseABP}
Let $\ep \in (0,\frac{1}{\ell}]$ and $z\in \Zd$ and suppose $A,A'\in \Omega$ satisfy~\eqref{e.AtotA}. Also fix  $f,f'\in C(\Rd) \cap L^\infty(\Rd)$ and let $u,u'\in C(\Rd) \cap L^{\infty}(\mathbb{R}^{d}) $ be the solutions of
\begin{equation*} \label{}
\left\{ \begin{aligned}
& \ep^2 u - \tr\left(A(x) D^2u\right) = f & \mbox{in} & \ \Rd, \\
& \ep^2 u' - \tr\left(A'(x) D^2u'\right) = f' & \mbox{in} & \ \Rd.
\end{aligned} \right.
\end{equation*}
Then there exists~$C(d,\lambda,\Lambda,\ell)\geq 1$ such that, for every $x\in \Rd$ and $\delta\in (0,1]$,
\begin{multline} \label{e.coarseABP}
u(x) - u'(x) \\
\leq C\left( \delta +  \left[ u \right]_{C^{1,1}_1(z,B_{1/\ep}(z))} + \sup_{y\in B_\ell(z), \, y'\in\Rd} \left( f(y') - f(y) - \delta \ep^{2}|y'-z|^2 \right) \right)G_\ep'(x,z) \\ + \sum_{y\in \Zd} G_\ep(x,y) \sup_{B_{\ell/2}(y)} (f-f').
\end{multline}
Here $G_\ep'$ denotes the modified Green's function for $A'$. 
\end{lemma}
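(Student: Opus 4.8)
The plan is to compare $w := u - u'$ to a suitable supersolution built from the modified Green's function $G_\ep'$ of $A'$, together with a quadratic correction localized near $z$ which compensates for the fact that we only control the coarsened $C^{1,1}$ seminorm of $u$ at scale $1$, not the genuine Hessian of $u$ inside $B_\ell(z)$. First I would write the equation satisfied by $w$: since $A \equiv A'$ outside $B_{\ell/2}(z)$ by~\eqref{e.AtotA}, we have
\begin{equation*}
\ep^2 w - \tr\left( A'(x) D^2 w \right) = \tr\left( (A(x) - A'(x))(D^2 u) \right) \indc_{B_{\ell/2}(z)} + (f - f') \quad \mbox{in} \ \Rd.
\end{equation*}
The term $(f-f')$ is handled cleanly: by the comparison principle it contributes at most $\sum_{y\in\Zd} G_\ep(x,y) \sup_{B_{\ell/2}(y)}(f-f')$ to $w(x)$, since $\{\chi_{B_\ell(y)}\}_{y\in\Zd}$ forms a bounded-overlap cover of $\Rd$ and each $G_\ep(\cdot,y)$ solves~\eqref{e.GF}. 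So the real work is to bound the contribution of the first term on the right, which is supported in $B_{\ell/2}(z)$ and bounded pointwise by $d\Lambda |D^2 u|$ there — but $|D^2 u|$ pointwise is exactly what we do NOT control.

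The key device is to choose an affine function $l$ achieving (nearly) the infimum in $[u]_{C^{1,1}_1(z, B_{1/\ep}(z))}$ at the scale $r = \ell$, so that $\osc_{B_\ell(z)}(u - l) \le \ell^2 [u]_{C^{1,1}_1(z,B_{1/\ep}(z))}$, and then to replace $u$ by $\bar u := u - l$ in the source term: since $\tr(A(x)D^2 l) = \tr(A'(x)D^2 l) = 0$ (affine functions have zero Hessian), the equation for $w$ is unchanged if we write the source as $\tr((A-A')(D^2 \bar u))\indc_{B_{\ell/2}(z)}$. Now the point is that $\bar u$ has small oscillation on $B_\ell(z)$, and $\bar u$ solves $\ep^2 \bar u - \tr(A(x) D^2 \bar u) = f - \ep^2 l$ there; I would use the ABP/maximum-principle estimate on $B_\ell(z)$ (at scale $\ell$, which is comparable to $1$, so $\ep^2$-terms are under control since $\ep \le 1/\ell$) to bound $\bar u$ from above in $B_{\ell/2}(z)$ by a supersolution of the constant-coefficient-type inequality. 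Concretely, one shows that
\begin{equation*}
\bar u(x) \le C\left( \delta + [u]_{C^{1,1}_1(z,B_{1/\ep}(z))} + \sup_{y\in B_\ell(z),\, y'\in\Rd}\left( f(y') - f(y) - \delta\ep^2|y'-z|^2 \right) \right) \eta(x) \quad \mbox{in} \ B_{\ell/2}(z)
\end{equation*}
for a fixed nonnegative profile $\eta$ with $\eta \equiv 1$ on $B_{\ell/2}(z)$; this is where the $\delta \ep^2 |y'-z|^2$ subtraction enters, allowing us to dominate $f$ by an appropriate parabola-like test function on the unit-size ball even though $f$ is only globally bounded and continuous. Feeding this bound back, the source term for $w$ is dominated by $C(\delta + [u]_{C^{1,1}_1} + \cdots)\indc_{B_{\ell/2}(z)}$, and comparison with $C(\delta + \cdots) G_\ep'(\cdot, z)$ — using that $G_\ep'$ solves~\eqref{e.GF} with right side $\chi_{B_\ell(z)} \ge \chi_{B_{\ell/2}(z)}$ — yields the desired estimate on $w(x)$ after adding back the $(f-f')$ contribution.

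The main obstacle, I expect, is the bookkeeping around the coarsened seminorm: one must carefully exploit that $[u]_{C^{1,1}_1(z, B_{1/\ep}(z))}$ does control $\osc(u - l)$ on balls of \emph{unit} size around $z$ (so, in particular, on $B_\ell(z)$ since $\ell$ is a fixed constant $\ge 2\sqrt d$), convert this into a pointwise bound on the solution $\bar u$ of a uniformly elliptic equation on $B_\ell(z)$ via Krylov--Safonov / ABP, and crucially keep the zeroth-order $\ep^2$-term harmless — which works precisely because $\ep \le 1/\ell$ makes the characteristic length $\ep^{-1}$ at least $\ell$, so on the unit-scale ball the massive term is a bounded perturbation. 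A secondary subtlety is that $f$ is only in $C(\Rd)\cap L^\infty$, so one cannot use an interior Hölder norm of $f$; this is exactly why the statement keeps $f$ inside the oscillation-type quantity $\sup_{y\in B_\ell(z),\,y'\in\Rd}(f(y') - f(y) - \delta\ep^2|y'-z|^2)$ rather than a cleaner seminorm, and one must check the construction of the dominating test function tolerates this weak control on $f$.
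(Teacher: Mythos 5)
Your plan correctly identifies the two main devices the paper's proof uses — subtracting an affine function to turn the coarsened $C^{1,1}$ seminorm into an $L^\infty$ bound near $z$, and the role of the $\delta\ep^2|y'-z|^2$ term in controlling the globally bounded but non-Hölder source $f$ — but the central step in your plan has a genuine gap that, as far as I can see, cannot be patched.

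The gap: you write the equation for $w := u-u'$ with source term $\tr\bigl((A-A')D^2\bar u\bigr)\indc_{B_{\ell/2}(z)} + (f-f')$ and propose to dominate the first piece pointwise by $C\bigl(\delta + [u]_{C^{1,1}_1(z,B_{1/\ep}(z))} + \cdots\bigr)\indc_{B_{\ell/2}(z)}$ after bounding $\bar u$ in $L^\infty$. But that inference requires a \emph{pointwise} bound on $|D^2\bar u|$ in $B_{\ell/2}(z)$, and neither ABP nor Krylov--Safonov gives one: ABP bounds the solution, Krylov--Safonov gives interior $C^{0,\alpha}$ regularity, and a Hessian bound would require $C^{2,\alpha}$ Schauder estimates, which in turn require quantitative Hölder control on $A$ — exactly what the paper deliberately refuses to assume (see the discussion around~\eqref{e.Fcont}). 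This is the same reason one only has the \emph{coarsened} $C^{1,1}_1$ seminorm in the first place. So the comparison with $G_\ep'$ cannot be run with this source term in hand.

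The paper sidesteps this entirely by never attempting to bound $D^2 u$ or the source term. Instead it assumes for contradiction that $\sup_{\Rd}\bigl(u - u' - KG_\ep'(\cdot,z)\bigr) > 0$, uses the maximum principle (together with the fact that $A\equiv A'$ and $f\le f'$ off $B_{\ell/2}(z)$, after absorbing the $(f-f')$ part exactly as you propose) to locate a touching point $x_0\in B_{\ell/2}(z)$, and then constructs a \emph{smooth} test function
\[
\psi(x) := u(x_0) + p\cdot(x-x_0) - \nu|x-x_0|^2 - u'(x) - KG_\ep'(x,z),
\]
where $p$ is the slope of a near-optimal affine approximation to $u$ on $B_\ell(z)$ (so $\osc$ of $u - l$ is controlled by $\ell^2[u]_{C^{1,1}_1}$, as you also use) and $\nu \gtrsim [u]_{C^{1,1}_1}$ forces $\psi$ to have an interior local maximum in $B_\ell(z)$. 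At that maximum, $u' + KG_\ep'$ is touched from below by a smooth parabola, so the viscosity supersolution inequality for the $A'$-operator can be applied to the parabola itself; plugging in and using the equation for $u$ and the $\delta$-term to control $f$ yields the contradiction for $K$ just beyond the claimed bound. The crucial difference from your plan is that the barrier argument converts the coarsened $C^{1,1}$ information into a choice of test-function curvature $\nu$, rather than into a (unavailable) pointwise Hessian bound on $u$. If you want to rescue your formulation, you would have to interpret the source term in a suitably weak sense and re-derive the comparison with $G_\ep'$ without pointwise $D^2\bar u$ control — which, once worked out, is essentially the paper's touching argument in disguise.
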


\begin{proof}
We may assume without loss of generality that $z=0$. By replacing $u(x)$ by the function $$u''(x):= u(x)-\sum_{y\in\Zd} G_\ep(x,y) \sup_{B_{\ell/2}(y)} (f-f'),$$ we may assume furthermore that~$f' \geq f$ in~$\Rd$. Fix $\ep\in(0,\tfrac12]$. We will show that
\begin{equation*} \label{}
\sup_{x\in \Rd} \left( u(x) - u'(x) - K G_\ep'(x,0) \right) > 0
\end{equation*}
implies a contradiction for 
\begin{equation*} \label{}
K > 
C\left( \delta +  \left[ u \right]_{C^{1,1}_1(z,B_{1/\ep})} + \sup_{y\in B_\ell, \, y'\in\Rd} \left( f(y') - f(y) - \delta \ep^{2}|y'|^2 \right) \right)G_\ep'(x,0)
\end{equation*}
and $C=C(d,\lambda,\Lambda,\ell)$ chosen sufficiently large.

\smallskip

\emph{Step 1.} We find a touching point $x_0 \in B_{\ell/2}$. Consider the auxiliary function
\begin{equation*} \label{}
\xi(x):= u(x) - u'(x) - K G_\ep'(x,0).
\end{equation*}
By~\eqref{e.AtotA} and using that $f'\geq f$, we see that $\xi$ satisfies
\begin{equation*} \label{}
\ep^2 \xi - \tr\left(A(x) D^2\xi\right)  \leq 0 \quad \mbox{in} \ \Rd \setminus B_{\ell/2}.
\end{equation*}
By the maximum principle and the hypothesis, $\sup_{\Rd} \xi = \sup_{B_{\ell/2}} \xi > 0$. Select $x_0\in B_{\ell/2}$ such that 
\begin{equation} \label{e.touchx0}
\xi(x_0) =  \sup_{\Rd} \xi.
\end{equation}

\emph{Step 2.} We replace $u$ by a quadratic approximation in $B_{\ell}$ and get a new touching point. Select $p\in \Rd$ such that 
\begin{equation} \label{e.innerp1}
\sup_{x\in B_{\ell} } \left| u(x) - u(x_{0}) - p\cdot (x-x_{0})\right| \leq 4\ell^2 \left[ u \right]_{C^{1,1}_1(0,B_{1/\ep})}.
\end{equation}
Fix $\nu\geq1$ to be chosen below and define the function
\begin{equation*} \label{}
\psi(x):= u(x_0) + p\cdot (x-x_0)  - \nu |x-x_0|^2 - u'(x) - KG_\ep'( x,0),
\end{equation*}
The claim is that 
\begin{equation} \label{e.touchclm}
x \mapsto  \psi(x) \quad \mbox{has a local maximum in \ $B_{\ell}$.}
\end{equation}
To verify~\eqref{e.touchclm}, we check that $\psi(x_0) > \sup_{\partial B_{\ell}} \psi$. For~$y\in \partial B_{\ell}$, we compute
\begin{align*}
\psi(x_0) & = u(x_0) - u'(x_0) - KG_\ep'(x_0,0) && \\
& \geq u(y) - u'(y) - KG_\ep'(y,0) && \mbox{(by~\eqref{e.touchx0})} \\
& \geq u(x_0) +p\cdot(y-x_0) - 8\ell^2 \left[ u \right]_{C^{1,1}_1(0,B_{1/\ep})}   - u'(y) - KG_\ep'(y,0)
&& \mbox{(by~\eqref{e.innerp1})} \\
& = \psi(y) + \nu|y-x_0|^2 - 8\ell^2\left[ u \right]_{C^{1,1}_1(0,B_{1/\ep})} \\
& \geq \psi(y) + \ell^2 \nu - 8\ell^2\left[ u \right]_{C^{1,1}_1(0,B_{1/\ep})},
\end{align*}
where in the last line we used $|y-x_0| \geq\frac{\ell}{2}$. Therefore, for every 
\begin{equation*} \label{}
\nu > 8 \left[ u \right]_{C^{1,1}_1(0,B_{1/\ep})},
\end{equation*}
the claim~\eqref{e.touchclm} is satisfied.

\smallskip

\emph{Step 3.} We show that, for every $x\in\Rd$, 
\begin{equation} 
\label{e.ufdirtybound}
u(x) \leq C\delta \ep^{-2} + |x|^2 + \sup_{y\in\Rd} \left( \ep^{-2} f(y) - \delta |y|^2 \right) 
\end{equation}
Define
\begin{equation*} \label{}
w(x):= u(x) - \left( |x|^2 + L \right), \quad \mbox{for} \quad L:= \sup_{x\in\Rd} \left( \ep^{-2} f(x) - \delta |x|^2 \right) + 2d\Lambda \ep^{-2} \delta. 
\end{equation*}
Using the equation for $u$, we find that 
\begin{equation*} \label{}
\ep^2 w - \tr\left(A(x) D^2w \right) 
\leq
f - \ep^2 ( |x|^2 + L) + 2d\Lambda\delta.
\end{equation*}
Using the definition of $L$, we deduce that  
\begin{equation*} \label{}
\ep^2 w - \tr\left(A(x) D^2w \right) \leq 0 \quad \mbox{in} \ \Rd. 
\end{equation*}
Since $w(x) \to -\infty$ as $|x|\to \infty$, we deduce from the maximum principle that $w\leq 0$ in $\Rd$. This yields~\eqref{e.ufdirtybound}.

\smallskip

\emph{Step 4.} We conclude by obtaining a contradiction to~\eqref{e.touchclm} for an appropriate choice of~$K$. Observe that, in $B_\ell$, the function $\psi$ satisfies
\begin{align*}
\ve^{2}\psi-\tr \left( A' (x)D^2 \psi \right)&\leq \ve^{2}(u(x_{0})+p\cdot (x-x_{0})-\nu|x-x_{0}|^{2})+C\nu- f'(x)-K\\
& \leq \ep^2 u(x) + C\nu - f(x) - K \\
& \leq C\nu + \sup_{y\in\Rd} \left( \delta+ f(y) - f(x) - \delta  \ep^{2}|y|^2 \right) - K. 
\end{align*}
Thus~\eqref{e.touchclm} violates the maximum principle provided that
\begin{equation*} \label{}
K > C(\nu+\delta) + \sup_{x\in B_\ell, \, y\in\Rd} \left( f(y) - f(x) -  \delta \ep^{2}|y|^2 \right).
\end{equation*}
This completes the proof. 
\end{proof}

We now use the previous lemma and the estimates in Sections~\ref{s.reg} and~\ref{s.green} to prove the sensitivity estimates~\eqref{e.Vbound1}.

\begin{proof}[Proof of Proposition~\ref{p.sensitivity}]
Fix $s\in (0,d)$. Throughout, $C$ and $c$ will denote positive constants depending only on $(s,d,\lambda,\Lambda,\ell)$ which may vary in each occurrence, and $\X$ denotes an $\F_*$--measurable random variable on $\Omega_*$  satisfying
\begin{equation*} \label{}
\E \left[ \exp\left( \X^p \right) \right] \leq C \quad \mbox{for every} \ p < s,
\end{equation*}
which we also allow to vary in each occurrence.

We fix $z\in \Zd$ and identify the conditional expectation with respect to~$\F_*(\Zd\setminus \{z\})$ via resampling, as in~\eqref{e.resampling}. By the discussion following the statement of the proposition, it suffices to prove the bound~\eqref{e.wtssense} with $\Y_*$ defined by~\eqref{e.Ysrv} for some random variable $\Y \leq \X$. To that end, fix $\omega\in\Omega$, $\omega\in\Omega'$, and denote $\tilde \omega:=\theta_z(\omega,\omega')$ as well as $A=\pi(\omega)$ and $A'=\pi(\tilde \omega)$. Note that~\eqref{e.AtotA} holds. Also let $\phi_\ep$ and $\phi'_\ep$ denote the approximate correctors and $G_\ep$ and $G_\ep'$ the Green's functions. 
\smallskip

\emph{Step 1.} We use Theorem~\ref{t.regularity} to estimate $\left[ \phi_\ep \right]_{C^{1,1}_1(z,B_{1/\ep}(z))}$. In preparation, we rewrite the equation for $\phi_\ep$ in terms of 
\begin{equation*} \label{}
w_\ep(x): = \frac{1}{2}x\cdot Mx  + \phi_\ep(x),
\end{equation*}
which satisfies
\begin{equation*} \label{e.modmodcorr}
-\tr\left( A(x) D^2w_\ep \right) =-\ep^2 \phi_\ep(x). 
\end{equation*}
In view of the fact that the constant functions $\pm\sup_{x\in \mathbb{R}^{d}} \left|\tr(A(x)M)\right|$ are super/subsolutions, we have
\begin{equation} \label{e.detep2bnd}
\sup_{x\in\Rd} \ep^2 \left| \phi_\ep(x)\right| \leq \sup_{x\in\Rd} \left| \tr(A(x)M) \right| \leq d\Lambda \leq C,
\end{equation}
and this yields
\begin{equation} \label{e.woscbndd}
\ep^{2} \osc_{B_{4/\ep}} w_\ep \leq \ep^{2} \osc_{ B_{4/\ep}} \frac{1}{2}x\cdot Mx + \ep^{2} \osc_{B_{4/\ep}} \phi_\ep \leq C.
\end{equation}
By the Krylov-Safonov Holder estimate \eqref{e.classicKS} applied to $w_\ep$ in $B_R$ with $R=4\ep^{-1}$ yields
\begin{equation} \label{e.Holder.scaledapp}
\ep^{2-\beta} \left[ w_\ep \right]_{C^{0,\beta}(B_{2/\ep})} \leq C.
\end{equation}
 Letting $Q(x):= \frac{1}{2}x\cdot Mx$, we have
\begin{equation}
\label{e.phiepcbeta}
\ep^ 2\left[ \phi_\ep  \right]_{C^{0,\beta}(B_{2/\ep})} \leq \ep^2 \left[ w_\ep \right]_{C^{0,\beta}(B_{2/\ep})} +\ep^2 \left[ Q \right]_{C^{0,\beta}(B_{2/\ep})} \leq C\ep^{\beta} + C|M|\ep^{\beta} \leq C\ep^{\beta}.
\end{equation}
We now apply Theorem~\ref{t.regularity} (specifically \eqref{e.pwcC11}) to $w_{\ep}$ with $R=2\ep^{-1}$ to obtain
\begin{align*} \label{}
\left[ w_\ep \right]_{C^{1,1}_{1}(z,B_{1/\ep}(z))} & \leq (T_z\X)^2 \left( \sup_{B_{2/\ep}} \ep^2 \phi+ \ep^{-\beta} \left[ \ep^ 2\phi_\ep  \right]_{C^{0,\beta}(B_{2/\ep})} + \ep^2 \osc_{B_{2/\ep}} w_\ep\right) \\
& \leq C (T_z\X)^2.
\end{align*}
As $w_\ep$ and $\phi_\ep$ differ by the quadratic $Q$, we obtain
\begin{equation} \label{e.C11bndcorr}
\left[ \phi_\ep \right]_{C^{1,1}_{1}(z,B_{1/\ep}(z))} \leq C \left( (T_z\X)^2 + |M| \right) \leq C  (T_z\X)^2. 
\end{equation}
\smallskip

\emph{Step 2.} We estimate~$G_\ep'(\cdot,z)$ and complete the argument for~\eqref{e.wtssense}. By Proposition~\ref{p.Green}, we have
\begin{equation} \label{e.Gwrapp}
G_\ep'(x,z)  \leq (T_z\X)^{d-1-\delta}  \xi_\ep(x-z)\quad
\end{equation}
for $\xi_\ep(x-z)$ defined as in~\eqref{e.dimsep}.
Lemma~\ref{l.coarseABP} yields, for every $x\in\Rd$,
\begin{equation*} \label{}
\big| \phi_\ep(x) - \phi_\ep'(x)\big| \leq C\left( 1 + \left[ \phi_\ep\right]_{C^{1,1}_{1}(z,B_{1/\ep}(z))} \right) G_\ep'(x,z)+2d\Lambda G_{\ep}(x,z).
\end{equation*}
Inserting~\eqref{e.C11bndcorr} and~\eqref{e.Gwrapp} gives
\begin{equation} 
\label{e.phiepprime}
\big| \phi_\ep(x) - \phi_\ep'(x)\big| \leq C (T_z\X)^2 (T_z\X)^{d-1-\delta}  \xi_\ep(x-z).
\end{equation}
This is~\eqref{e.wtssense}.
\end{proof}

\section{Optimal scaling for the approximate correctors}
\label{s.optimals}

We complete the rate computation for the approximate correctors $\phi_{\ep}$. We think of breaking up the decay of $\ep^{2}\phi_{\ep}(0)-\tr(\overline{A}M)$ into two main contributions of error: 
\begin{equation*} \label{}
 \ep^2 \phi_\ep(0) - \tr(\overline{A}M) = \underbrace{\ep^2 \phi_\ep(0) - \E \left[ \ep^2 \phi_\ep(0) \right]}_{\mbox{\small``random error"}} + \underbrace{\E \left[ \ep^2 \phi_\ep(0) \right] - \tr(\overline{A}M)}_{\mbox{\small``deterministic error"}}.
\end{equation*}

 The ``random error" will be controlled by the concentration inequalities established in~Section~\ref{s.sensitivity}. We will show that the ``deterministic error" is controlled by the random error, and this will yield a rate for $\ep^{2}\phi_{\ep}(0)+\tr(\overline{A}M)$. 
 
 \smallskip
 
 First, we control the random error using Proposition~\ref{p.concentration} and the estimates from the previous three sections.
 
 \begin{proposition}
\label{p.randomerror}
There exist $\delta(d,\lambda,\Lambda)>0$ and $C(d,\lambda,\Lambda,\ell) \geq 1$ such that, for every $\ep \in (0,\tfrac12]$,  and $x\in \mathbb{R}^{d}$,
\begin{equation} \label{e.randomerror}
\E \left[  \exp \left(\left( \frac1{\err(\ep)} \left| \ep^2 \phi_\ep(x) - \E \left[\ep^2 \phi_\ep(x) \right] \right| \right)^{\frac{1}{2} + \delta } \right) \right] \leq C.
\end{equation}
\end{proposition}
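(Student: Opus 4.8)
The plan is to run the scheme assembled over the previous three sections: apply the spectral gap inequality of \pref{concentration} to the random variable $\phi_\ep(x)$ (viewed as $\F_*$--measurable via $\pi$), and estimate the resulting quantity $\V_*$ by the sensitivity bound of \pref{sensitivity}. Since $\P$ is the pushforward of $\P_*$ under $\pi$, it is equivalent to prove \eqref{e.randomerror} with $\E$ replaced by $\E_*$. Fix $x\in\Rd$, let $\delta'=\delta'(d,\lambda,\Lambda)>0$ and the random variable $\X$ with $\E_*[\exp(\X^s)]\le C$ be those of \pref{sensitivity} (where $s\in(0,d)$ will be chosen close to $d$), and set $q:=2(d+1-\delta')$. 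I would first pick $\delta=\delta(d,\lambda,\Lambda)>0$ small and $s<d$ close enough to $d$ that, with $\beta:=\tfrac12+\delta$, one has $p:=\frac{\beta}{2-\beta}<\frac{s}{q}<1$; this is possible because $\sup_{s<d}\frac{s}{2(d+1-\delta')}=\frac{d}{2(d+1-\delta')}>\tfrac13$ for every $d\ge2$, while $\frac{\beta}{2-\beta}\downarrow\tfrac13$ as $\delta\downarrow0$. Applying \pref{concentration} with this $\beta$ to $X:=\err(\ep)^{-1}\ep^2\phi_\ep(x)$ yields
\begin{equation*}
\E_*\!\left[\exp\!\left(\left|X-\E_*X\right|^{\beta}\right)\right]\le C\,\E_*\!\left[\exp\!\left(\left(C\,\V_*[X]\right)^{p}\right)\right]^{(2-\beta)/\beta},
\end{equation*}
and since $\E_*X=\err(\ep)^{-1}\E[\ep^2\phi_\ep(x)]$, the left side is the quantity in \eqref{e.randomerror}. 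So it remains to bound $\E_*[\exp((C\,\V_*[X])^{p})]\le C$ uniformly in $\ep\in(0,\tfrac12]$ and $x$.

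Unwinding $\V_*[X]$ with \pref{sensitivity} gives
\begin{equation*}
\V_*[X]=\frac{\ep^4}{\err(\ep)^2}\sum_{z\in\Zd}\bigl(\phi_\ep(x)-\E_*[\phi_\ep(x)\mid\F_*(\Zd\setminus\{z\})]\bigr)^2\le\sum_{z\in\Zd}c_z\,(T_z\X)^{q},
\end{equation*}
where $c_z:=\ep^4\err(\ep)^{-2}\xi_\ep(x-z)^2\ge0$ and $\xi_\ep$ is as in \eqref{e.dimsep}. The deterministic crux is that $\sum_{z\in\Zd}c_z\le C_0(d,\lambda,\Lambda,\ell)$ uniformly in $\ep$ and $x$; I would prove this by comparing the sum with $\ep^4\err(\ep)^{-2}\int_{\Rd}\xi_\ep(y)^2\,dy$ and computing dimension by dimension, using that $e^{-2a\ep|y|}$ cuts the integral off at scale $\ep^{-1}$, so that $\int_{\Rd}\xi_\ep^2\le C\err(\ep)^2\ep^{-4}$ (with equality up to constants when $d\ge3$). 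This is the computation already anticipated by the model example \eqref{e.easyexample}, and it is exactly here that the shape \eqref{e.error} of $\err(\ep)$ --- its logarithms in $d=2,4$ and subquadratic powers in $d=2,3$ --- is forced.

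It then remains to show $\E_*[\exp((C\sum_z c_z(T_z\X)^{q})^{p})]\le C$. Since $\sum_z c_z\le C_0$, after normalizing the weights this reduces to an elementary fact about weighted sums: if $b_z\ge0$ with $\sum_z b_z\le1$ and $(Y_z)_{z\in\Zd}$ are (arbitrarily dependent) random variables with $\sup_z\E_*[\exp(Y_z^{\gamma})]\le C_1$ for some $\gamma\in(0,1]$, then $\E_*[\exp((\sum_z b_z Y_z)^{\gamma'})]\le C_2(C_1,\gamma,\gamma')$ for every $\gamma'<\gamma$, with $C_2$ independent of the number of nonzero weights. Applying this with $Y_z=(T_z\X)^{q}$, $\gamma=s/q$ (so $\E_*[\exp(Y_z^{\gamma})]=\E_*[\exp(\X^s)]\le C$ by stationarity), and $\gamma'$ chosen in $(p,s/q)$, then using Young's inequality to absorb the constant $C_0^{p}$, gives the bound. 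The remaining range $\ep\in[\ep_0,\tfrac12]$ for a fixed $\ep_0=\ep_0(d,\lambda,\Lambda,\ell)$ is handled directly, since there $e^{-a\ep|x-z|}\ge e^{-a|x-z|}$ and $\V_*[X]$ is dominated by a single fixed summable deterministic weight times $(T_z\X)^{q}$; this completes the proof.

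The main obstacle is the interplay of exponents together with the weighted-sum lemma. Pushing the stretched-exponential exponent $\beta$ for $|X-\E_*X|$ strictly past $\tfrac12$ (equivalently $p$ past $\tfrac13$) is possible only by using that $s$ may be taken arbitrarily close to $d$ --- this is precisely the point where the sub-optimal stochastic integrability of the concentration-inequality approach shows up, and $\delta$ should not be expected to be close to optimal. The proof of the weighted-sum lemma, in turn, must control the ``single big jump'' coming from the largest weight (the term with $z$ nearest $x$, where $\xi_\ep$ is biggest): one truncates $Y_z$ at level $\sim(\log\tfrac1\ep)^{1/\gamma}$, and the attendant logarithmic loss is harmless because $\sum_z c_z$ is bounded (indeed $o(1)$ as $\ep\to0$ when $d=2$).
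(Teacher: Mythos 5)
Your proposal is correct and follows essentially the same route as the paper: apply the stretched-exponential spectral gap inequality (Proposition~\ref{p.concentration}), bound the vertical derivatives via the sensitivity estimate (Proposition~\ref{p.sensitivity}), and control the resulting weighted sum of $(T_z\X)^q$ using the deterministic bound $\sum_z\xi_\ep(z)^2\leq C\ep^{-4}\err(\ep)^2$ together with Jensen's inequality for the eventually-convex function $t\mapsto\exp(t^\gamma)$ on $t\geq1$, which is exactly what your ``weighted-sum lemma'' packages. Two immaterial slips: $\sum_z c_z$ is $O(1)$ but not $o(1)$ as $\ep\to0$ in $d=2$, and the truncation level in the proof of the weighted-sum lemma should depend only on $(\gamma,\gamma',C_1)$ rather than on $\ep$, since the lemma is stated uniformly over weight configurations.
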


\begin{proof}
For readability, we prove \eqref{e.randomerror} for~$x=0$. The argument for general $x\in\Rd$ is almost the same. 
Define 
\begin{equation*} \label{}
\xi_\ep(x):= \exp(-a\ep|x|)\cdot \left\{  \begin{aligned} 
 & \log\left( 2 + \frac{1}{\ep(1+|x|)}\right) && \mbox{if} \ d = 2 , \\
 & (1 + |x|)^{2-d}&& \mbox{if} \ d\geq 3.
 \end{aligned} \right.
\end{equation*}
According to Proposition~\ref{p.sensitivity}, for $\beta>0$, 
\begin{align*} \label{}
\lefteqn{ \exp\left(  C \left( \V_*\left[  \frac{\ep^2 \phi_\ep(0)}{\err(\ep)} \right] \right)^\beta  \right)  } \qquad & \\
& = \exp\left(  C \left( \frac{\ep^4}{\err(\ep)^2} \sum_{z\in\Zd} \left( \phi_\ep(0) - \E_*\!\left[ \phi_\ep(0) \,\vert\, \F_*(\Zd\setminus \{ z \}) \right] \right)^2 \right)^\beta  \right) \\
& \leq \exp\left(  C \left( \frac{\ep^4}{\err(\ep)^2} \sum_{z\in\Zd}  (T_z\X)^{2d+2-2\delta} \xi_\ep(z)^2  \right)^\beta  \right).
\end{align*}
We claim (and prove below) that 
\begin{equation}\label{e.randerrwts} 
\sum_{z\in\Z^{d}} \xi_\ep(z)^{2}\leq C\ep^{-4}\err(\ep)^{2}.
\end{equation}
Assuming~\eqref{e.randerrwts} and applying Jensen's inequality for discrete sums, we have
\begin{align*} \label{}
\lefteqn{
\exp\left(  C \left( \frac{\ep^4}{\err(\ep)^2} \sum_{z\in\Zd}  (T_z\X)^{2d+2-2\delta} \xi_\ep(z)^2  \right)^\beta  \right) 
} \qquad & \\
&\leq \exp\left(C\left(\frac{\sum_{z\in\Zd}  (T_z\X)^{2d+2-2\delta} \xi(z)^2  }{\sum_{z\in\Zd} \xi_\ep(z)^{2}}\right)^\beta \right) \\
&\leq \frac{\sum_{z\in\Zd}\xi(z)^2  \exp \left(C\left(T_z\X\right)^{(2d+2-2\delta)\beta}\right) }{\sum_{z\in\Zd} \xi_\ep(z)^{2}}.
\end{align*}
Select $\beta := d/(2d+2- 3\delta)$. Taking expectations, using stationarity, and applying Proposition~\ref{p.sensitivity}, we obtain 
\begin{equation*} \label{}
\E_*\left[ \exp\left(  C \left( \V_*\left[  \frac{\ep^2 \phi_\ep(0)}{\err(\ep)} \right] \right)^\beta  \right) \right] \leq C.
\end{equation*}
Finally, an application of Proposition~\ref{p.concentration} gives, for $\gamma : = 2\beta / (1+\beta) \in (0,2)$,
\begin{equation*} \label{}
\E\left[ \exp\left(  \left| \frac{\ep^2 \phi_\ep(0)}{\err(\ep)} -\E\left[  \frac{\ep^2 \phi_\ep(0)}{\err(\ep)} \right] \right|^\gamma \right) \right] \leq C.
\end{equation*}

This completes the proof of the proposition, subject to the verification of~\eqref{e.randerrwts}, which is a straightforward computation. In dimension $d\geq 3$, we have
\begin{align*}
\sum_{z\in\Zd} \xi_\ep(z)^2  & \leq C \int_{\Rd} \left( 1+ |x| \right)^{4-2d} \exp\left(-2a\ep|x| \right) \, dx \\
& =  C \ep^{d-4} \int_{\Rd} \left( \ep + |y| \right)^{4-2d}\exp\left(-2a|y| \right)\, dy \\
& \leq C\ep^{d-4} \left( \int_{\Rd \setminus B_\ep} |y|^{4-2d} \exp\left(-2a|y| \right) \, dy + \int_{B_\ep} \ep^{4-2d}\, dy \right)  \\
& = C \cdot \left\{ \begin{aligned} & 1 + \ep^{d-4} && \mbox{in} \ d\neq 4,\\
& 1+ \left| \log \ep \right| && \mbox{in}  \  d=4,
\end{aligned} \right. \\
&= C \ep^{-4} \err(\ep)^2.
\end{align*}
In dimension $d=2$, we have
\begin{align*}
\sum_{z\in\Zd} \xi_\ep(z)^2  & \leq C \int_{\Rd} \log^2\left( 2 + \frac{1}{\ep(1+|x|)} \right)  \exp\left(-2a\ep|x| \right) \, dx \\
& \leq C \ep^{-2} \int_{\Rd} \log^2\left(2+\frac{1}{\ep+|y|} \right) \exp\left(-2a|y| \right) \, dy \\
& \leq C \ep^{-2} \bigg(  \int_{B_\ep} \log^2\left( 2+\frac{1}{\ep}\right) \exp\left(-2a|y| \right)\,dy  \\
& \qquad \qquad \qquad + \int_{\Rd \setminus B_{\ep}} \log^2\left( 2+\frac{1}{|y|}\right) \exp\left(-2a|y| \right)  \bigg).
\end{align*}
We estimate the two integrals on the right as follows:
\begin{equation*} \label{}
\int_{B_\ep} \log^2\left( 2+\frac{1}{\ep}\right) \exp\left(-2a|y| \right)\,dy \leq |B_\ep| \log^2\left( 2+\frac{1}{\ep}\right) \leq C \ep^2  \log^2\left( 2+\frac{1}{\ep}\right)
\end{equation*}
and
\begin{multline*}
\int_{\Rd \setminus B_{\ep}} \log^2\left( 2+\frac{1}{|y|}\right) \exp\left(-2a|y| \right)\,dy\\
\leq |B_1| \log^2\left(2+\frac{1}{\ep} \right)  + C\int_{\Rd\setminus B_1} \exp\left( -2a|y| \right)\, dy \leq C\log^2\left(2+\frac1\ep \right).
\end{multline*}
Assembling the last three sets of inequalities yields in $d=2$ that 
\begin{equation*} \label{}
\sum_{z\in\Zd} \xi_\ep(z)^2 \leq C \ep^{-2} \log^2\left(2+\frac1\ep \right) \leq C\ep^{-4} \err(\ep)^2. \qedhere
\end{equation*}
\end{proof}

Next, we show that the deterministic error is controlled from above by the random error. The basic idea is to argue that if the deterministic error is larger than the typical size of the random error, then this is inconsistent with the homogenization. The argument must of course be quantitative, so it is natural that we will apply~Proposition~\ref{p.subopt}. Note that if we possessed the bound $\sup_{x\in\Rd}|\phi_\ep(x) - \E\left[\phi_\ep(0)\right]| \lesssim \ep^{-2} \err(\ep)$, then our proof here would be much simpler. However, this bound is too strong-- we do not have, and of course cannot expect, such a uniform estimate on the fluctuations to hold-- and therefore we need to cut off larger fluctuations and argue by approximation. This is done by using the Alexandrov-Backelman-Pucci estimate and~\eqref{e.randomerror} in a straightforward way.

\begin{proposition}
\label{p.deterministicerror}
There exists $C(d,\lambda,\Lambda,\ell)\geq1$ such that, for every  $\ep \in (0,\tfrac12]$ and $x\in\Rd$,
\begin{equation} \label{e.determinserr}
\left|  \E \left[ \ep^2 \phi_\ep(x) \right] - \tr(\overline{A}M )  \right| \leq C \err(\ep).
\end{equation}
\end{proposition}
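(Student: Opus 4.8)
The plan is to run a quantitative version of the heuristic that a deterministic error $e_\ep := \E[\ep^2\phi_\ep(0)] - \tr(\overline{A}M)$ much larger than the typical fluctuation $\err(\ep)$ would be incompatible with the suboptimal homogenization rate of Proposition~\ref{p.subopt}. By stationarity of $\phi_\ep$ the quantity $\E[\ep^2\phi_\ep(x)]$ does not depend on $x$, so it suffices to treat $x=0$; write $e_\ep$ as above, so that $\ep^2\phi_\ep(0) = \tr(\overline{A}M) + e_\ep + g_\ep(0)$, where $g_\ep := \ep^2\phi_\ep - \E[\ep^2\phi_\ep(0)]$ has mean zero and, by Proposition~\ref{p.randomerror}, $\E\,|g_\ep(x)|^p \le C_p\,\err(\ep)^p$ for every $p<\infty$, uniformly in $x$.

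First I would set up a homogenization comparison in $B_{1/\ep}$. The function $w_\ep(x):=\tfrac12 x\cdot Mx + \phi_\ep(x)$ solves $-\tr(A(x)D^2 w_\ep) = -\ep^2\phi_\ep(x)$, and by \eqref{e.detep2bnd} and the Krylov--Safonov estimate \eqref{e.phiepcbeta} the source has $\sup_{B_R}|{-}\ep^2\phi_\ep| + R^{\sigma}[{-}\ep^2\phi_\ep]_{C^{0,\sigma}(B_R)} \le C$ with $R=\ep^{-1}$. Letting $\bar w$ solve $-\tr(\overline{A}D^2\bar w) = -\ep^2\phi_\ep$ in $B_R$ with $\bar w = w_\ep$ on $\partial B_R$, Proposition~\ref{p.subopt} controls $\ep^2|w_\ep(0) - \bar w(0)|$. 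I would then analyse $\bar w$, which solves a constant-coefficient equation: subtracting the quadratic and rescaling to $B_1$, $\ep^2\bar w$ becomes the solution $\tilde\zeta$ of $-\tr(\overline{A}D^2\tilde\zeta) = \tr(\overline{A}M) - \ep^2\phi_\ep(\cdot/\ep)$ in $B_1$ with boundary values $\ep^2\phi_\ep(\cdot/\ep)$. Representing $\tilde\zeta(0)$ through the Green's function of $-\tr(\overline{A}D^2\cdot)$ on $B_1$, splitting the source into its mean $-e_\ep$ and the mean-zero fluctuation, and taking expectations (using $\E[\ep^2\phi_\ep(x)]=\tr(\overline{A}M)+e_\ep$ for all $x$), gives $\E[\ep^2\bar w(0)] = \tr(\overline{A}M) + (1-c_0)e_\ep$, where $c_0 = c_0(d,\lambda,\Lambda) \in (0,1)$ is the $B_1$-integral of that Green's function. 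Since $\E[\ep^2 w_\ep(0)] = \E[\ep^2\phi_\ep(0)] = \tr(\overline{A}M)+e_\ep$, this yields $c_0\,|e_\ep| \le \E\big[\,\ep^2|w_\ep(0)-\bar w(0)|\,\big]$.

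The crux is that the elementary bound from Proposition~\ref{p.subopt} only gives $\E[\ep^2|w_\ep(0)-\bar w(0)|] \le C\ep^{\alpha}$ with the suboptimal exponent $\alpha$ (the macroscopic quadratic profile and the mere $C^{0,\sigma}$-regularity of $\phi_\ep$ at small scales both force $\Gamma_{R,\sigma}\gtrsim 1$). To replace $\ep^{\alpha}$ by $\err(\ep)$ I would cut off the large fluctuations: replace $\ep^2\phi_\ep$, both in the interior source and in the boundary data on $\partial B_R$, by its truncation to a window of width $K\,\err(\ep)$ around $\E[\ep^2\phi_\ep(0)]$. The truncated field is uniformly within $CK\,\err(\ep)$ of a constant, while it differs from $\ep^2\phi_\ep$ by a field $\rho_\ep$ with $\E\big[(\fint_{B_R}|\rho_\ep|^d)^{1/d}\big] \le C\,\err(\ep)\,e^{-cK^{1/2+\delta}}$ (Chebyshev together with the stretched-exponential moment of Proposition~\ref{p.randomerror}); by the Alexandrov--Bakelman--Pucci estimate the contribution of $\rho_\ep$ to $w_\ep(0)-\bar w(0)$ and to $\bar w(0)$ is negligible once $K$ is a large universal constant. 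Re-running the comparison with the truncated data, all the genuinely random data is now of size $\err(\ep)$ apart from the explicit macroscopic quadratic profile; one then removes the residual $\ep^{\alpha}$ coming from that profile by bootstrapping in $\ep$ — improving a provisional bound $|e_\ep|\le C\ep^{\beta}$ to a larger exponent by feeding it back into the analysis of $\bar w$ — until the exponent saturates at the order of $\err(\ep)$.

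The main obstacle is precisely this upgrade from the crude $\ep^{\alpha}$ to the sharp $\err(\ep)$: the naive homogenization estimate is lossy both because $w_\ep$ carries an $O(1)$ macroscopic quadratic profile and because $\ep^2\phi_\ep$ is not uniformly close to its mean. Getting this right requires careful bookkeeping of exactly where the unavailable $L^\infty$ control of $g_\ep$ is replaced by the (available) $L^d$ and $L^1$ control — which is where the ABP estimate and Proposition~\ref{p.randomerror} enter — while the remaining steps (solving and estimating constant-coefficient Dirichlet problems and invoking the comparison principle) are routine, if lengthy.
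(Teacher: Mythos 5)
Your set-up — comparing $w_\ep=\tfrac12 x\cdot Mx+\phi_\ep$ to a constant-coefficient competitor and extracting $e_\ep:=\E[\ep^2\phi_\ep(0)]-\tr(\overline{A}M)$ from the Green's-function representation of the homogenized Dirichlet problem — is the right skeleton, and the Green's-function bookkeeping (which here gives $\E[\ep^2 w_\ep(0)-\ep^2\bar w(0)]=c_0\,e_\ep$) is a clean alternative to the paper's explicit paraboloid formula~\eqref{e.formhatphi}. The use of truncation plus ABP to handle the absence of a uniform $L^\infty$ bound on $g_\ep$ is also in the spirit of Steps~2--3 of the paper's proof. But the final step, ``remove the residual $\ep^\alpha$ coming from that profile by bootstrapping in $\ep$,'' has a genuine gap.

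At your fixed scale $R=\ep^{-1}$ the error from Proposition~\ref{p.subopt} is $CR^{-\alpha}\Gamma_{R,\sigma}$, and $\Gamma_{R,\sigma}$ is bounded below by the contribution of the constant part of the source, $|\tr(\overline AM)|$, and by $R^{-2}\osc_{\partial B_R}(\tfrac12 x\cdot Mx)\simeq|M|$. Neither of these quantities depends on $e_\ep$, so the comparison yields $c_0|e_\ep|\le C\ep^\alpha + C\err(\ep)$ on every pass, and feeding a provisional bound $|e_\ep|\le C\ep^\beta$ back into the argument does not shrink the $\ep^\alpha$ term; there is no mechanism for the claimed saturation. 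Since $\alpha$ from \cite{AS3} is small and unknown, this falls short of $\err(\ep)$ in every dimension $d\ge 3$. The missing idea is to decouple $R$ from $\ep$ and send $R\to\infty$ with $\ep$ fixed: at macroscopic scale $R$ the homogenization error $CR^{-\alpha}\Gamma_{R,\sigma}$ vanishes, while the Green's-function (or paraboloid) coefficient multiplying $e_\ep$ scales like $R^2$ and dominates. Two further points are then essential. First, to keep $\Gamma_{R,\sigma}$ bounded as $R\to\infty$, the comparison must be against a \emph{constant-source} constant-coefficient problem; this is achieved in the paper by the one-sided cutoff $\widehat f_\ep:=\min\{-\ep^2\phi_\ep,\E[-\ep^2\phi_\ep(0)]\}$, which makes $\widehat\phi_\ep$ a \emph{subsolution} of the constant-source problem, combined with the one-sided form of Proposition~\ref{p.subopt}. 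With your symmetric truncation and the actual source $-\ep^2\phi_\ep^{\mathrm{tr}}$ one still has $R^\sigma[\ep^2\phi_\ep^{\mathrm{tr}}]_{C^{0,\sigma}(B_R)}\simeq(R\ep)^\sigma\to\infty$, since truncation at level $K\err(\ep)$ does not reduce the unit-scale H\"older seminorm $\sim\ep^\sigma$ coming from Krylov--Safonov. Second, the random boundary values of $\phi_\ep$ on $\partial B_R$ are replaced by the deterministic supersolution constant $d\Lambda\ep^{-2}|M|$, so that the constant-coefficient problem solves explicitly and the coefficient of $e_\ep$ can be read off. Chaining ABP, the homogenization estimate at scale $R$, and the explicit formula, and then letting $R\to\infty$, kills both $R^{-\alpha}$ and $\ep^{-2}R^{-2}$ and gives $|e_\ep|\le C\err(\ep)$.
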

\begin{proof}
By symmetry, it suffices to prove the following one-sided bound: for every $\ep\in (0, \frac{1}{2}]$ and $x\in\Rd$,
\begin{equation} \label{e.systwts}
\tr(\overline{A}M)-\E \left[ \ep^2 \phi_\ep(x) \right]  \geq -C \err(\ep).
\end{equation}
The proof of \eqref{e.systwts} will be broken down into several steps. 

\smallskip

\emph{Step 1.} We show that 
\begin{equation} \label{e.offgrid}
\E \left[ \exp\left( \left( \frac{1}{\ep^{-2} \mathcal E(\ep)}\left( \osc_{B_{\sqrt{d}}} \phi_\ep\right)\right)^{\frac12+\delta}\right) \right] \leq C.
\end{equation}
Let $k \in\L$ be the affine function satisfying
\begin{equation*} \label{}
\sup_{x\in B_{\sqrt{d}}} \left| \phi_\ep - k(x) \right| = \inf_{l \in\L} \sup_{x\in B_{\sqrt{d}}} \left| \phi_\ep - l(x) \right|.
\end{equation*}
According to~\eqref{e.C11bndcorr}, 
\begin{equation} 
\label{e.canoteherc11}
\sup_{x\in B_{\sqrt{d}}} \left| \phi_\ep - k(x) \right| \leq C\X^2. 
\end{equation}
Since $k$ is affine, its slope can be estimated by its oscillation on $B_{\sqrt{d}} \cap \Zd$:
\begin{equation*} \label{}
\left| \nabla k \right| \leq C \osc_{B_{\sqrt{d}}\cap \Zd} k.
\end{equation*}
The previous line and~\eqref{e.canoteherc11} yield that  
\begin{equation*} \label{}
\left| \nabla k \right| \leq C \osc_{B_{\sqrt{d}}\cap \Zd} \phi_\ep  + C \X^2. 
\end{equation*}
By stationarity and~\eqref{e.randomerror}, we get 
\begin{equation} 
\label{e.bghlas}
\E \left[ \exp\left( \left( \frac{1}{\ep^{-2} \mathcal E(\ep)}\left( \osc_{B_{\sqrt{d}}\cap \Zd} \phi_\ep\right)\right)^{\frac12+\delta}\right) \right] \leq C
\end{equation}
Therefore, 
\begin{equation*} \label{}
\E \left[ \exp\left( \left( \frac{1}{\ep^{-2} \mathcal E(\ep)}\left| \nabla k \right|\right)^{\frac12+\delta}\right) \right] \leq C
\end{equation*}
The triangle inequality,~\eqref{e.canoteherc11} and~\eqref{e.bghlas} imply~\eqref{e.offgrid}. 

\smallskip

\emph{Step 2.} Consider the function
\begin{equation*} \label{}
f_\ep (x): = \left( -\ep^2 \phi_\ep(x) +  \E\left[\ep^2 \phi_\ep(0) \right] \right)_+.
\end{equation*}
We claim that, for every $R\geq 1$,
\begin{equation} \label{e.ABPsetup}
\E \left[ \left( \fint_{B_R}  \left| f_\ep(x) \right|^d \, dx \right)^{\frac1d} \right] \leq C \err(\ep).
\end{equation}
Indeed, by Jensen's inequality,~\eqref{e.randomerror} and~\eqref{e.offgrid},
\begin{equation*} \label{}
\E \left[ \left( \fint_{B_R}  \left| f_\ep(x) \right|^d \, dx \right)^{\frac1d} \right] \leq \left( \fint_{B_R} \E \left[ \left| f_\ep(x) \right|^d \right]\, dx \right)^{\frac1d} \leq C\err(\ep). 
\end{equation*}

\smallskip

\emph{Step 3.} We prepare the comparison. Define
\begin{equation*} \label{}
\widehat f_\ep(x):= \min\left\{ -\ep^2 \phi_\ep(x), \E \left[ -\ep^2\phi_\ep(0) \right]  \right\} = -\ep^2 \phi_\ep(x) - f_\ep(x),
\end{equation*}
fix $R\geq 1$ (we will send $R\to \infty$ below) and denote by $\widehat \phi_\ep$, the solution of
\begin{equation*}
\left\{ \begin{aligned} 
& -\tr\left( A(x) (M+D^2 \widehat \phi_\ep )\right) = \widehat f_\ep  & \mbox{in} & \ B_R, \\
& \widehat \phi_\ep = d\Lambda \ep^{-2} |M|& \mbox{on} & \ \partial B_R.
\end{aligned} \right.
\end{equation*}
Note that the boundary condition was chosen so that $\phi_\ep \leq \widehat\phi_\ep$ on $\partial B_R$. Thus the Alexandrov-Backelman-Pucci estimate and~\eqref{e.ABPsetup} yield
\begin{equation} \label{e.ABPapp}
\E \left[ R^{-2} \sup_{B_R} \left( \phi_\ep - \widehat\phi_\ep \right)  \right] \leq C \err(\ep).
\end{equation}

\smallskip

\emph{Step 4.} Let $\widehat{\phi}$ denote the solution to
\begin{equation} \label{e.homogdetcmp}
 \left\{ \begin{aligned} 
& -\tr(\overline{A} (M+D^{2}\widehat \phi)) = \E \left[ -\ep^2\phi_\ep(0) \right]  & \mbox{in} & \ B_R, \\
& \widehat \phi = d\Lambda \ep^{-2} |M|& \mbox{on} & \ \partial B_R.
\end{aligned} \right.
\end{equation}
Notice that the right hand side and boundary condition for~\eqref{e.homogdetcmp}~are chosen to be constant. 
Moreover, we can solve for $\widehat \phi$ explicitly: for $x\in B_R$, we have
\begin{equation} \label{e.formhatphi}
\widehat \phi(x) = d\Lambda \ep^{-2} |M| - \frac{|x|^2-R^2}{2\tr \overline{A}}\left(\tr(\overline{A}M) - \E \left[ \ep^2\phi_\ep(0) \right] \right).
\end{equation}
We point out that since $\widehat f_\ep \leq \E \left[ -\ep^2\phi_\ep(0) \right]$, we have that $\widehat\phi_{\ep}$ satisfies 
\begin{equation*} 
 \left\{ \begin{aligned} 
& -\tr(A(x) (M+D^{2}\widehat \phi_{\ep})) \leq \E \left[ -\ep^2\phi_\ep(0) \right]  & \mbox{in} & \ B_R, \\
& \widehat \phi_{\ep} = d\Lambda \ep^{-2}|M| & \mbox{on} & \ \partial B_R.
\end{aligned} \right.
\end{equation*}
It follows then that we may apply Proposition~\ref{p.subopt}~to the pair $\frac{1}{2}x\cdot Mx + \widehat\phi_\ep(x)$ and $\frac{1}{2}x\cdot Mx + \widehat \phi(x)$, which gives
\begin{equation} \label{e.EEapp}
\E \left[ R^{-2} \sup_{x\in B_R} \left( \widehat \phi_\ep - \widehat \phi \right)  \right] \leq CR^{-\alpha}.
\end{equation}

\smallskip

\emph{Step 5.} The conclusion. We have, by~\eqref{e.ABPapp},~\eqref{e.EEapp} and~\eqref{e.formhatphi},
\begin{align*}
-d\Lambda \ep^{-2} |M| &\leq \E \left[ \phi_\ep(0) \right] \\
& \leq \E \left[\widehat \phi_\ep(0) \right] + C \err(\ep) R^2 \\
& \leq \widehat\phi(0) + CR^{2-\alpha} + C \err(\ep) R^2 \\
& = d\Lambda \ep^{-2} |M| + \frac{R^2}{2\tr\overline{A}} \left(\tr(\overline{A}M) - \ep^2\E \left[ \phi_\ep(0) \right] \right) + CR^{2-\alpha} + C \err(\ep) R^2. 
\end{align*} 
Rearranging, we obtain
\begin{equation*} \label{}
\tr(\overline{A}M) - \ep^2\E \left[ \phi_\ep(0) \right] \geq C\left[-2d\Lambda |M| \ep^{-2} R^{-2} - CR^{-\alpha} - C\err(\ep)\right].
\end{equation*}
Sending $R\to \infty$ yields 
\begin{equation*}
\tr(\overline{A}M)-\E \left[ \ep^2 \phi_\ep(0) \right]  \geq -C \err(\ep).
\end{equation*}
Since~\eqref{e.offgrid} and stationarity implies that, for every $x\in\Rd$, 
\begin{equation*} \label{}
\left| \E \left[ \ep^2 \phi_\ep(x) \right] - \E \left[ \ep^2 \phi_\ep(0) \right] \right| \leq C\err(\ep),
\end{equation*}
the proof of~\eqref{e.systwts} is complete.
\end{proof}

The proof of Theorem~\ref{t.correctors} is now complete, as it follows immediately from Propositions~\ref{p.randomerror},~\ref{p.deterministicerror} and~\eqref{e.offgrid}.

\section{Existence of stationary correctors in $d>4$}
\label{s.correctors}

In this section we prove the following result concerning the existence of stationary correctors in dimensions larger than four. 

\begin{theorem}
\label{t.existcorrectors}
Suppose $d>4$ and fix $M\in\Sd$, $|M|=1$. Then there exists a constant $C(d,\lambda,\Lambda,\ell)\geq 1$ and a stationary function $\phi$ belonging $\P$--almost surely to $C(\Rd) \cap L^\infty(\Rd)$, satisfying 
\begin{equation}
\label{e.correctoreq}
-\tr\left(A(x)\left(M+D^2\phi \right) \right) = -\tr\left( \overline AM \right) \quad \mbox{in} \ \Rd
\end{equation}
and, for each $x\in\Rd$ and $t\geq1$,  the estimate
\begin{equation}
\label{e.correctorest}
\P \left[ \left| \phi(x) \right| > t \right]  \leq C \exp\left( -t^{\frac12} \right). 
\end{equation}
\end{theorem}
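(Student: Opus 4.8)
The plan is to obtain $\phi$ as the limit, as $\ep\downarrow 0$, of the recentered approximate correctors $\tilde\phi_\ep:=\phi_\ep-\E[\phi_\ep(0)]$. Two structural facts get the construction off the ground. First, $\tilde\phi_\ep$ is itself a \emph{stationary} random field: uniqueness of $\phi_\ep$ and translation-invariance of the equation give $\phi_\ep(x+y;A)=\phi_\ep(x;T_yA)$, and subtracting the constant $\E[\phi_\ep(0)]$ does not disturb this. Second, in the regime $d>4$ we have $\err(\ep)=\ep^2$, so since $\tilde\phi_\ep(x)=\ep^{-2}\big(\ep^2\phi_\ep(x)-\E[\ep^2\phi_\ep(x)]\big)$, Proposition~\ref{p.randomerror} yields directly
\[
\E\Big[\exp\big(|\tilde\phi_\ep(x)|^{\frac12+\delta}\big)\Big]\le C\qquad\text{uniformly in }\ep\in(0,\tfrac12]\text{ and }x\in\Rd ,
\]
so $\{\tilde\phi_\ep(x)\}_\ep$ is bounded in every $L^p(\Omega,\P)$. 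Next I would record the uniform regularity needed for compactness: from the coarsened $C^{1,1}$ bound~\eqref{e.C11bndcorr} (used as in Step~1 of the proof of Proposition~\ref{p.sensitivity}), propagated from scale $\sqrt d$ to any fixed scale $R$ with Theorem~\ref{t.correctors} controlling the oscillation at scale $\sqrt d$, one gets $\|\tilde\phi_\ep\|_{L^\infty(B_R)}\le C_R\,\X_R$ with $\X_R$ having stretched-exponential moments, uniformly in $\ep$; feeding this into the Krylov--Safonov interior estimate~\eqref{e.classicKS} (whose exponent and constant are universal and do not see the qualitative modulus in~\eqref{e.Fcont}) gives, for each $R$, a uniform bound on $[\tilde\phi_\ep]_{C^{0,\sigma_0}(B_{R/2})}$ for some $\sigma_0(d,\lambda,\Lambda)>0$. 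Thus, for a.e.\ $A$, the family $\{\tilde\phi_\ep(\cdot;A)\}_\ep$ is locally bounded and equicontinuous.

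The heart of the argument is to show $\tilde\phi_\ep(0)$ is Cauchy in $L^2(\Omega,\P)$ as $\ep\downarrow0$; then $\phi_0:=\lim_{\ep\downarrow0}\tilde\phi_\ep(0)$ exists in $L^2$. Writing $v:=\phi_\ep-\phi_{\ep'}$, we have $\tilde\phi_\ep(0)-\tilde\phi_{\ep'}(0)=v(0)-\E[v(0)]$, so by the $L^2$ Efron--Stein inequality and the resampling identity~\eqref{e.resampling},
\[
\var\big(v(0)\big)\;\le\;\sum_{z\in\Zd}\E\Big[\E_*'\big[\big(v(0;\omega)-v(0;\theta_z'(\omega,\omega'))\big)^2\big]\Big].
\]
By Proposition~\ref{p.sensitivity}, for each $\eta\in\{\ep,\ep'\}$ the resampled increment of $\phi_\eta$ at $z$ is bounded by $(T_z\X)^{d+1-\delta}\,\xi_\eta(-z)$, and (using $d>2$) $\xi_\eta(-z)\le(1+|z|)^{2-d}$, so each summand is dominated, uniformly in $\ep,\ep'$, by $4(T_z\X)^{2(d+1-\delta)}(1+|z|)^{2(2-d)}$, whose expectations sum to a finite quantity \emph{precisely because $d>4$} (this is where the dimensional restriction enters). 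By dominated convergence in $z$ it then suffices to show that, for each fixed $z$, the resampled increment $\phi_\eta(0;\omega)-\phi_\eta(0;\theta_z'(\omega,\omega'))$ converges as $\eta\downarrow0$; equivalently, that $\delta_\eta:=\phi_\eta(\cdot;A)-\phi_\eta(\cdot;A')$, where $A,A'\in\Omega$ agree off $B_{\ell/2}(z)$, satisfies $\delta_\eta(0)\to\delta_0(0)$. Now $\delta_\eta$ solves $\eta^2\delta_\eta-\tr\!\big(A'D^2\delta_\eta\big)=\tr\!\big((A-A')(M+D^2\phi_\eta(\cdot;A))\big)$ with right side supported in $\overline{B_{\ell/2}(z)}$, and to pass to the limit I would combine: the monotone convergence $G_\eta(\cdot,z)\uparrow G_0(\cdot,z)$ of the modified Green's functions to the (finite, since $d\ge3$) massless Green's function; the uniform bound $|\delta_\eta|\le C(T_z\X)^{d+1-\delta}(1+|\cdot-z|)^{2-d}$; and interior $C^{2,\sigma}$ estimates for $\tilde\phi_\eta$ on $B_\ell(z)$ (using~\eqref{e.Fcont}, the uniform $L^\infty$ bound above, and the fact that $\eta^2\phi_\eta-\tr(\overline AM)\to0$ locally uniformly), which control $D^2\phi_\eta(\cdot;A)$ near $z$ and identify the limiting forcing. \textbf{This step is the one I expect to be the main obstacle}: Theorem~\ref{t.regularity} is coarsened and provides no pointwise control of $D^2\phi_\eta$, so the stabilization of the forcing term has to be extracted from the deterministic local Schauder theory rather than from the homogenization estimates, and some care is needed to verify that the local limit is genuine rather than merely sub-sequential.

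Granting this, I finish as follows. Define $\phi(x;A):=\phi_0(T_xA)$, which is stationary by construction. From the uniform Hölder bound, $\E\big[|\tilde\phi_\ep(x)-\tilde\phi_\ep(y)|^2\big]\le C|x-y|^{2\sigma_0}$ for $|x-y|\le1$, and since $\tilde\phi_\ep(x)=\tilde\phi_\ep(0)\circ T_x$, passing to the limit gives $\E\big[|\phi_0\circ T_x-\phi_0\circ T_y|^2\big]\le C|x-y|^{2\sigma_0}$; by Kolmogorov's continuity theorem $x\mapsto\phi(x;A)$ admits a locally Hölder continuous modification, which we take. For each $R$, $\tilde\phi_\ep\to\phi$ in $L^2(\Omega\times B_R)$ because $\E\int_{B_R}|\tilde\phi_\ep-\phi|^2=|B_R|\,\E|\tilde\phi_\ep(0)-\phi_0|^2\to0$; hence along a subsequence $\tilde\phi_{\ep_k}\to\phi$ a.e.\ on $\Omega\times\Rd$, and by equicontinuity this upgrades, for a.e.\ $A$, to locally uniform convergence. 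Since $\ep_k^2\tilde\phi_{\ep_k}(x)=\ep_k^2\phi_{\ep_k}(x)-\ep_k^2\E[\phi_{\ep_k}(0)]\to0$ locally uniformly by~\eqref{e.quallimit} and Proposition~\ref{p.deterministicerror}, the equation $-\tr(A(x)(M+D^2\tilde\phi_{\ep_k}))=-\ep_k^2\tilde\phi_{\ep_k}-\ep_k^2\E[\phi_{\ep_k}(0)]$ has right side converging locally uniformly to $-\tr(\overline AM)$, so stability of viscosity solutions gives~\eqref{e.correctoreq}. Because $\phi$ is stationary with $\E[\exp(|\phi(0)|^{\frac12+\delta})]\le C$ (Fatou's lemma applied to the first-paragraph bound), a union bound over a unit-scale net in $B_R$ together with the local Hölder modulus gives $R^{-2}\osc_{B_R}\phi\to0$ a.s., so $\phi$ is a bona fide corrector; the same Fatou bound and Chebyshev's inequality yield $\P[|\phi(x)|>t]\le C\exp(-t^{\frac12})$, which is~\eqref{e.correctorest}.
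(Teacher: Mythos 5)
The gap you flagged as the ``main obstacle'' is indeed fatal, and it is circular rather than merely technical. Your plan is to show $\tilde\phi_\ep(0)$ is Cauchy in $L^2$ by applying Efron--Stein to $v:=\phi_\ep-\phi_{\ep'}$, dominating the $z$-sum uniformly (which $d>4$ does provide), and then invoking dominated convergence to send each summand to zero. That last step requires the resampled increment $\delta_\eta(0)=\phi_\eta(0;A)-\phi_\eta(0;A')$ to converge as $\eta\downarrow 0$. But $\delta_\eta$ is driven by the forcing $\tr\big((A-A')(M+D^2\phi_\eta(\cdot;A))\big)$ supported in $B_{\ell/2}(z)$, and convergence of that forcing requires local convergence of $D^2\phi_\eta$ --- which is equivalent (up to the constant normalization) to convergence of $\tilde\phi_\eta$ itself, the very thing being proved. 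The deterministic interior Schauder theory you propose to substitute uses the qualitative modulus in~\eqref{e.Fcont}, whose constants are not under control, so it only yields precompactness and sub-sequential limits; nothing in your argument identifies a unique limit, and you acknowledge this yourself. Without a quantitative rate for $\delta_\ep(0)-\delta_{\ep'}(0)$, dominated convergence has nothing to converge to.

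The paper sidesteps this circularity by never trying to pass to a limit inside the Efron--Stein sum. Instead it works directly with $\psi_\ep:=\phi_\ep-\phi_{2\ep}$, which solves $\ep^2\psi_\ep-\tr(AD^2\psi_\ep)=3\ep^2\phi_{2\ep}$ --- an equation whose right side is already $O(\ep^2)$ with controllably small oscillation. Applying Lemma~\ref{l.coarseABP} and Theorem~\ref{t.regularity} \emph{to $\psi_\ep$ itself} gives the vertical-derivative bound~\eqref{e.expKzHyz} with the extra factor $\ep^{2-\gamma}$, so the Efron--Stein sum is not merely convergent but quantitatively small (Lemma~\ref{l.psiepsensitivity}). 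Lemma~\ref{l.expectationpsiep} supplies the matching bound on $\E[\psi_\ep]-\tfrac{3}{4\ep^2}\tr(\overline AM)$ by a telescoping-in-dyadic-scales argument that bootstraps from Proposition~\ref{p.deterministicerror}. Together these make $\widehat\phi_{2^{-n}}-\widehat\phi_{2^{-(n+1)}}$ summable in $n$, giving Cauchyness outright. So the fix for your argument is exactly the quantity you didn't exploit: the smallness of the forcing in the equation for $\psi_\ep$ is what replaces the missing pointwise convergence. Your compactness/Kolmogorov/stability closing steps and the Fatou argument for~\eqref{e.correctorest} are fine, but they sit downstream of a Cauchy claim that your proposal does not actually establish.
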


To prove Theorem~\ref{t.existcorrectors}, we argue that, after subtracting an appropriate constant,~$\phi_\ep$ has an almost sure limit as $\ep \to 0$ to a stationary function~$\phi$. We introduce the functions
\begin{equation*}
\widehat \phi_{\ep}:=\phi_{\ep}-\frac{1}{\ep^{2}}\tr(\overline{A}M)
\end{equation*}
Observe that
\begin{equation}
\label{e.hatphiep}
\ep^{2}\widehat \phi_{\ep}-\tr\left(A(x)D^{2}\widehat \phi_{\ep}\right)=-\tr (\overline{A}M).
\end{equation}
To show that $\hat\phi_\ep$ has an almost sure limit as $\ep\to 0$, we introduce the functions
\begin{equation*} \label{}
\psi_\ep := \phi_\ep - \phi_{2\ep}.
\end{equation*}
Then 
\begin{equation*}
\widehat \phi_{\ep}-\widehat \phi_{2\ep}=\psi_{\ep}-\frac{3}{4 \ep^{2}} \tr (\overline{A}M),
\end{equation*}
and the goal will be to prove bounds on $\widehat \phi_{\ep} - \widehat{\phi}_{\ep}$ which are summable over the sequence $\ep_n:=2^{-n}$. We proceed as in the previous section: we first estimate the fluctuations of~$\psi_\ep$ using a sensitivity estimate and a suitable version of the Efron-Stein inequality. We then use this fluctuation estimate to obtain bounds on its expectation using a variation of the argument in the proof of Proposition~\ref{p.deterministicerror}.

\smallskip

We begin with controlling the fluctuations. 

\begin{lemma}
\label{l.psiepsensitivity}
For every~$p\in [1, \infty)$ and $\gamma>0$, there exists $C(p,\gamma,d,\lambda,\Lambda,\ell)<\infty$ such that, for every $\ep \in\left(0,\frac12\right]$ and $x\in \mathbb{R}^{d}$, 
\begin{equation}\label{e.flucpsi}
\E\left[\left|\psi_{\ep}(x)-\E[\psi_{\ep}(x)]\right|^{p}\right]^{\frac1p}\leq C\ep^{\left(\frac{d-4}2 \wedge 2 \right) -\gamma}.
\end{equation}
\end{lemma}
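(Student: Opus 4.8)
The plan is to mimic the argument used to prove Proposition~\ref{p.randomerror}, replacing the approximate corrector $\phi_\ep$ by the difference $\psi_\ep = \phi_\ep - \phi_{2\ep}$ and the single Green's function estimate by two of them (one for scale $\ep$ and one for scale $2\ep$). First I would establish a sensitivity estimate for $\psi_\ep$: for each $z\in\Zd$,
\[
\bigl| \psi_\ep(x) - \E_*\!\left[ \psi_\ep(x) \,\vert\, \F_*(\Zd\setminus \{ z \}) \right] \bigr| \leq (T_z\X)^{d+1-\delta}\bigl( \xi_\ep(x-z) + \xi_{2\ep}(x-z) \bigr),
\]
which follows by the resampling identity~\eqref{e.resampling}, the observation that $A\equiv A'$ off $B_{\ell/2}(z)$, and two applications of Lemma~\ref{l.coarseABP}/Proposition~\ref{p.sensitivity} — once to $\phi_\ep - \phi_\ep'$ and once to $\phi_{2\ep} - \phi_{2\ep}'$, then combining with the triangle inequality. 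The integrability~\eqref{e.scruff} of $\X$ is already provided. Since $\xi_{2\ep} \leq C\xi_\ep$ pointwise (the massive cutoff and the polynomial/logarithmic factor are both comparable after replacing $\ep$ by $2\ep$), the right side is really controlled by $(T_z\X)^{d+1-\delta}\xi_\ep(x-z)$, and so the vertical-derivative bound for $\psi_\ep$ has essentially the same form as that for $\phi_\ep$.

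Next I would estimate $\V_*[\psi_\ep(x)] = \sum_{z\in\Zd}\bigl(\psi_\ep(x) - \E_*[\psi_\ep(x)\,\vert\,\F_*(\Zd\setminus\{z\})]\bigr)^2$. The key deterministic input is the computation from the proof of Proposition~\ref{p.randomerror},
\[
\sum_{z\in\Zd} \xi_\ep(z)^2 \leq C\ep^{-4}\err(\ep)^2,
\]
so by Jensen's inequality (exactly as in that proof) and stationarity, $\E_*[\exp((C\V_*[\psi_\ep(x)])^{\beta})] \leq C$ for a suitable $\beta<1$. The crucial point where this lemma differs is that here we do \emph{not} want to renormalize by $\err(\ep)$: instead we want the bound $\E[|\psi_\ep(x) - \E[\psi_\ep(x)]|^p]^{1/p} \leq C\ep^{(\frac{d-4}2\wedge 2)-\gamma}$. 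Note that $\err(\ep) \sim \ep^2 \ell(\ep)$ (up to log factors) in $d\geq 5$ and $\ep^2|\log\ep|^{1/2}$ in $d=4$, hence $\ep^{-2}\err(\ep) \sim \ep^{(\frac{d-4}2\wedge 2)}$ up to logarithmic corrections precisely in the range $d\geq 5$ where $\frac{d-4}{2} < 2$, and equals $\ep^0$-type for large $d$. Absorbing the logarithms costs an arbitrarily small power $\ep^{-\gamma}$, which is exactly the loss recorded in~\eqref{e.flucpsi}. Concretely: from the stretched-exponential bound on $\V_*[\psi_\ep(x)/(\ep^{-2}\err(\ep))]$ together with Proposition~\ref{p.concentration} we get $\E[\exp(|\psi_\ep(x)/(\ep^{-2}\err(\ep)) - \E[\cdot]|^{\gamma'})]\leq C$ for some $\gamma'\in(0,2)$; since exponential moments control all polynomial moments, $\E[|\psi_\ep(x) - \E[\psi_\ep(x)]|^p]^{1/p} \leq C_p\,\ep^{-2}\err(\ep) \leq C_{p,\gamma}\,\ep^{(\frac{d-4}2\wedge 2)-\gamma}$.

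The main obstacle, and the only place real care is needed, is the bookkeeping of which $\err(\ep)$ / $\xi_\ep$ scaling governs $\psi_\ep$ and verifying that in every dimension $d\geq 2$ the resulting exponent is at least $(\frac{d-4}{2}\wedge 2)-\gamma$; in particular one should check the borderline dimensions $d=4$ (logarithmic factor, absorbed by $\ep^{-\gamma}$) and the transition at $d=8$ where $\frac{d-4}{2}$ crosses $2$ and the minimum caps out (there the relevant sum $\sum_z \xi_\ep(z)^2$ is $O(1)$, giving $\err(\ep)\sim\ep^2$ and the clean rate $\ep^{2-\gamma}$). I would also double-check the $\xi_{2\ep}\leq C\xi_\ep$ comparison carefully in $d=2$, where the logarithm $\log(2 + 1/(\ep(1+|x|)))$ behaves slightly differently under $\ep\mapsto 2\ep$, but the ratio is plainly bounded by an absolute constant. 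Everything else is a routine transcription of Proposition~\ref{p.randomerror}.
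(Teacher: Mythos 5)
There is a genuine gap, and it occurs precisely where the lemma earns its keep. Your plan is to apply the sensitivity estimate (Proposition~\ref{p.sensitivity}) to $\phi_\ep$ and $\phi_{2\ep}$ \emph{separately} and then triangle-inequality. That yields a vertical-derivative bound of the form $(T_z\X)^{d+1-\delta}\xi_\ep(x-z)$ with \emph{no} extra power of $\ep$ beyond what $\phi_\ep$ alone satisfies, so after squaring, summing in $z$, and invoking Efron--Stein you recover the same scale $\ep^{-2}\err(\ep)$ that governs the fluctuations of $\phi_\ep$ itself. Your subsequent claim that ``$\ep^{-2}\err(\ep)\sim\ep^{(\frac{d-4}{2}\wedge 2)}$ up to logarithmic corrections'' is false: from~\eqref{e.error}, $\err(\ep)=\ep^2$ exactly for $d>4$, so $\ep^{-2}\err(\ep)=1$, not a positive power of $\ep$. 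Since Lemma~\ref{l.psiepsensitivity} is used to prove Theorem~\ref{t.existcorrectors}, which is stated only for $d>4$, the regime that matters is exactly the one where your bound gives $O(1)$ instead of the required $O(\ep^{(\frac{d-4}{2}\wedge 2)-\gamma})$. (Your bound is acceptable in $d\le 4$ where the target exponent is nonpositive, but that is not the interesting range.) The triangle inequality simply throws away the cancellation between $\phi_\ep$ and $\phi_{2\ep}$, which is the entire content of the lemma.

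The paper's argument captures this cancellation by \emph{not} splitting $\psi_\ep$: one observes that $\psi_\ep$ solves $\ep^2\psi_\ep - \tr(A(x)D^2\psi_\ep)=3\ep^2\phi_{2\ep}=:h_\ep$, and applies Lemma~\ref{l.coarseABP} directly to the pair $(\psi_\ep,\psi_\ep')$. The point is that the right-hand side $h_\ep$, its local fluctuations $\sup(h_\ep(y')-h_\ep(y)-\ep^4|y'-z|^2)$, the coarsened seminorm $[\psi_\ep]_{C^{1,1}_1}$, and the resampled difference $h_\ep-h_\ep'$ are all of order $\ep^{2-\gamma}$ (by~\eqref{e.newoscboundphiep}, Theorem~\ref{t.regularity}, and~\eqref{e.phiepprime}), so the vertical derivative of $\psi_\ep$ carries an extra factor $\ep^{2-\gamma}$ that your split estimate lacks. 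Combined with the single and double Green's-function sums~\eqref{e.randerrwts} and~\eqref{e.Greenie1}, i.e. $(\sum_z\xi_\ep(z)^2)^{1/2}\lesssim 1$ and $(\sum_z(\sum_y\xi_\ep(z-y)\xi_\ep(x-y))^2)^{1/2}\lesssim(1+\ep^{-1})^{4-d/2}$, this $\ep^{2-\gamma}$ produces the exponent $(\frac{d-4}{2}\wedge 2)-\gamma$. If you want to salvage your approach you must avoid treating $\phi_\ep$ and $\phi_{2\ep}$ independently and instead control the resampled difference $\psi_\ep-\psi_\ep'$ through the PDE for $\psi_\ep$, which is exactly what the paper does.
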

\begin{proof}
In view of the Efron-Stein inequality for $p$th moments (cf.~\eqref{e.pefronstein}), it suffices to show that 
\begin{equation}\label{e.vertflucpsi}
\E \left[ \V_{*}\left[\psi_{\ep}(x)\right]^{\frac p2} \right]^{\frac1p} \leq C\ep^{\left( \frac{d-4}2 \wedge 2 \right) -\gamma}.
\end{equation}
We start from the observation that $\psi_{\ep}$ satisfies the equation
\begin{equation} \label{e.eq1forpsiep}
\ep^2 \psi_\ep -\tr\left(A(x)D^2\psi_\ep \right) = 3\ep^2\phi_{2\ep}  \quad \mbox{in} \ \Rd.
\end{equation}
Denote the right-hand side by $h_\ep:= 3\ep^2\phi_{2\ep}$. 

\smallskip

\emph{Step 1.} We outline the proof of~\eqref{e.vertflucpsi}. Fix $z\in \mathbb{R}^{d}$. We use the notation from the previous section, letting $A':=\pi(\theta'_z(\omega,\omega'))$ denote a resampling of the coefficients at~$z$. We let $\psi_\ep'$, $\phi_\ep'$, $G_\ep'$, etc, denote the corresponding functions defined with respect to~$A'$. Applying Lemma~\ref{l.coarseABP} with $\delta = \ep^2$, in view of~\eqref{e.eq1forpsiep}, we find that
\begin{align*} 
\lefteqn{
\psi_\ep(x) - \psi'_\ep(x)
} \  & 
\\ & \notag
\leq 
C\left( \ep^2 +  \left[ \psi_\ep \right]_{C^{1,1}_1(z,B_{1/\ep}(z))} + \sup_{y\in B_\ell(z), \, y'\in\Rd} \left( h_\ep(y') - h_\ep(y) - \ep^{4}|y'-z|^2 \right) \right)G_\ep'(x,z) 
\\ & \notag \qquad
+ \sum_{y\in \Zd} G_\ep(x,y) \sup_{B_{\ell/2}(y)} (h_\ep- h_\ep') 
\\ & \notag
 =: CK(z) \xi_\ep(x-z) + C\sum_{y\in\Zd} H(y,z) \xi_\ep( z-y) \xi_\ep(x-y).
\end{align*}
Here we have defined
\begin{multline*}
K(z):= \xi_\ep(x-z)^{-1} \bigg( \ep^2 +  \left[ \psi_\ep \right]_{C^{1,1}_1(z,B_{1/\ep}(z))} \\
+ \sup_{y\in B_\ell(z), \, y'\in\Rd} \left( h_\ep(y') - h_\ep(y) - \ep^{4}|y'-z|^2 \right) \bigg)G_\ep'(x,z)
\end{multline*}
and
\begin{equation*}
H(y,z):=  \left( \xi_\ep( z-y) \xi_\ep(x-y) \right)^{-1} G_\ep(x,y) \sup_{B_{\ell/2}(y)} (h_\ep- h_\ep').
\end{equation*}
These are random variables on the probability space~$\Omega\times\Omega'$ with respect to the probability measure $\tilde{\P} := \P_*\times \P_*'$. Below we will check that, for each $p\in [1, \infty)$ and $\gamma>0$, there exists $C(p,\gamma,d,\lambda,\Lambda,\ell)<\infty$ such that
\begin{equation} \label{e.expKzHyz}
\tilde{\E} \left[ K(z)^p \right]^{\frac1p} 
+
\tilde{\E} \left[ H(y,z)^p \right]^{\frac1p}  \leq C\ep^{2-\gamma}.
\end{equation}
To ease the notation, we will drop the tildes and just write $\E$ instead of $\tilde{\E}$. 
We first complete the proof of~\eqref{e.vertflucpsi} assuming that~\eqref{e.expKzHyz} holds. In view of the discussion in Section~\ref{s.sensitivity}, we compute:
\begin{align*}
\lefteqn{
\E \left[ \V_{*}\left[\psi_{\ep}(x)\right]^{\frac p2} \right]
} \quad & 
\\ &
\leq \E\left[ \left( \sum_{z\in\Zd} \left( CK(z) \xi_\ep(x-z) + C\sum_{y\in\Zd} H(y,z) \xi_\ep( z-y) \xi_\ep(x-y) \right)^2 \right)^{\frac p2} \right] 
\\ &
\leq C \E\left[ \left( \sum_{z\in\Zd} \left[ K(z) \xi_\ep(x-z) \right]^2 \right)^{\frac p2} \right] 
\\ & \qquad
+ C \E\left[ \left( \sum_{z\in\Zd} \left( \sum_{y\in\Zd} H(y,z) \xi_\ep( z-y) \xi_\ep(x-y) \right)^2 \right)^{\frac p2} \right].
\end{align*}
By Jensen's inequality,~\eqref{e.randerrwts} and~\eqref{e.expKzHyz},
\begin{align*}
\E\left[ \left( \sum_{z\in\Zd} \left[ K(z) \xi_\ep(x-z) \right]^2 \right)^{\frac p2} \right] 
& 
\leq \left( \sum_{z\in\Zd} \xi_\ep^2(x-z)\right)^{\frac p2 -1} \E\left[ \left( \sum_{z\in\Zd} K(z)^{p} \xi_\ep^2(x-z)  \right) \right] 
\\ & 
\leq C\ep^{(2-\gamma)p} \left( \sum_{z\in\Zd} \xi_\ep^2(x-z)\right)^{\frac p2} \leq C\ep^{(2-\gamma)p}
\end{align*}
and by Jensen's inequality and~\eqref{e.expKzHyz}, 
\begin{align*}
\lefteqn{
\E\left[ \left( \sum_{z\in\Zd} \left( \sum_{y\in\Zd} H(y,z) \xi_\ep( z-y) \xi_\ep(x-y) \right)^2 \right)^{\frac p2} \right]
} \qquad & 
\\  & 
=  \E\left[ \left( \sum_{z\in\Zd} \sum_{y,y'\in\Zd} H(y,z)H(y',z) \xi_\ep( z-y) \xi_\ep(x-y)\xi_\ep( z-y') \xi_\ep(x-y') \right)^{\frac p2} \right]
\\ & 
\leq \left(  \sum_{z,y,y'\in\Zd} \xi_\ep( z-y) \xi_\ep(x-y)\xi_\ep( z-y') \xi_\ep(x-y') \right)^{\frac p2-1}
 \\ & \ \ \times
 \E\left[\sum_{z,y,y'\in\Zd} H(y,z)^{\frac p2}H(y',z)^{\frac p2} \xi_\ep( z-y) \xi_\ep(x-y)\xi_\ep( z-y') \xi_\ep(x-y') \right]
\\ & 
= \left( \sum_{z\in\Zd} \left( \sum_{y\in\Zd} \xi_\ep( z-y) \xi_\ep(x-y)  \right)^2 \right)^{\frac p2-1}
\\ & \quad 
\times
\sum_{z,y,y'\in\Zd}  \E\left[H(y,z)^{\frac p2}H(y',z)^{\frac p2} \right]\xi_\ep( z-y) \xi_\ep(x-y)\xi_\ep( z-y') \xi_\ep(x-y') 
\\ & 
\leq \left( \sum_{z\in\Zd} \left( \sum_{y\in\Zd} \xi_\ep( z-y) \xi_\ep(x-y)  \right)^2 \right)^{\frac p2} C\ep^{(2-\gamma)p}.
\end{align*}
In view of the inequality 
\begin{equation} \label{e.Greenie1}
\left( \sum_{z\in\Zd} \left( \sum_{y\in\Zd} \xi_\ep( z-y) \xi_\ep(x-y)  \right)^2 \right)^{\frac 12} 
\leq C\left(1+\ep^{-1} \right)^{4-\frac d2},
\end{equation}
which we also will prove below, the demonstration of~\eqref{e.vertflucpsi} is complete. 

\smallskip

To complete the proof, it remains to prove~\eqref{e.expKzHyz} and~\eqref{e.Greenie1}.

\smallskip

\emph{Step 2.} Proof of~\eqref{e.Greenie1}. We first show that, for every $x,z\in \mathbb{R}^{d}$, 
\begin{equation}\label{e.estiny}
\sum_{y\in \mathbb{Z}^{d}} \xi_{\ep}(z-y)\xi_{\ep}(x-y)\leq C\exp(-c\ep|x-z|) \left((1+|x-z|)^{4-d}+\ep^{d-4}\right).
\end{equation}
Recall that in dimensions $d>4$, $\xi_{\ep}(x)=\exp(-a\ep|x|)(1+|x|)^{2-d}$. Denote $r:=|x-z|$. We estimate the sum by an integral and then split the sum into five pieces:
\begin{align*}
\lefteqn{
\sum_{y\in \mathbb{Z}^{d}} \xi_{\ep}(z-y)\xi_{\ep}(x-y)
} \qquad & \\
& \leq C\int_{\mathbb{R}^{d}}\xi_{\ep}(z-y)\xi_{\ep}(x-y)\, dy
\\ & 
\leq  C \int_{B_{r/4}(x)}\xi_{\ep}(z-y)\xi_{\ep}(x-y)\, dy
  +C\int_{B_{r/4}(z)}\xi_{\ep}(z-y)\xi_{\ep}(x-y)\, dy\\
& \quad+C\int_{B_{2r}(z)\setminus \left(B_{r/4}(z)\cup B_{r/4}(x)\right)}\xi_{\ep}(z-y)\xi_{\ep}(x-y)\, dy\\
& \quad+C\int_{B_{1/\ep}(z)\setminus B_{2r}(z)}\xi_{\ep}(z-y)\xi_{\ep}(x-y)\, dy\\
& \quad+C\int_{\mathbb{R}^{d}\setminus B_{1/\ep}(z)}\xi_{\ep}(z-y)\xi_{\ep}(x-y)\, dy.
\end{align*}
We now estimate each of the above terms. Observe first that
\begin{multline*}
\int_{B_{r/4}(x)} \xi_{\ep}(z-y)\xi_{\ep}(x-y)\, dy+\int_{B_{r/4}(z)} \xi_{\ep}(z-y)\xi_{\ep}(x-y)\, dy
\\
\leq C \exp(-c\ep r) \int_{B_{r}(0)} (1+r)^{2-d}(1+|y|)^{2-d}\, dy = C\exp(-c\ep r) (1+r)^{4-d}. 
\end{multline*}
Next, we estimate 
\begin{multline*}
\int_{B_{2r}(z)\setminus \left(B_{r/4}(z)\cup B_{r/4}(x)\right)} \xi_{\ep}(z-y)\xi_{\ep}(x-y)\, dy
\\
\leq C\exp (-c\ep r)\int_{B_{2r}(z)} (1+|y|)^{2(2-d)}\, dy C\exp(-c\ep r) (1+r)^{4-d} 
\end{multline*}
and
\begin{multline*}
\int_{B_{1/\ep}(z)\setminus B_{2r}(z)}\xi_{\ep}(z-y)\xi_{\ep}(x-y)\, dy
\\
\leq C\int_{B_{1/\ep}\setminus B_{2r}} \exp(-c\ep r) (1+|y|)^{4-2d}\, dy 
= C\exp(-c\ep r) (1+r)^{4-d}. 
\end{multline*}
Finally, since $d>4$, 
 \begin{align*}
 \lefteqn{
\int_{\mathbb{R}^{d}\setminus B_{1/\ep}(z)}\xi_{\ep}(z-y)\xi_{\ep}(x-y)\, dy
} \qquad & \\
&\leq C\exp(-c\ep r)\int_{\mathbb{R}^{d}\setminus B_{1/\ep}} \exp(-c\ep |y|)(1+|y|)^{4-2d}\, dy \\
&=C\exp(-c\ep r)\ep^{d-4}\int_{\mathbb{R}^{d}\setminus B_{1}} \exp(-2a|y|)(\ep+|y|)^{4-2d}\, dy \notag\\
&= C\exp(-c\ep r)\ep^{d-4}. \notag
\end{align*}
Combining the above inequalities yields~\eqref{e.estiny}.

\smallskip

To obtain \eqref{e.Greenie1}, we square~\eqref{e.estiny} and sum it over $z\in\Zd$ to find that 
\begin{multline*}
 \sum_{z\in\Zd} \left( \sum_{y\in\Zd} \xi_\ep( z-y) \xi_\ep(x-y)  \right)^2 
 \\
  \leq\int_{\mathbb{R}^{d}}C \exp\left( -c\ep|x-z| \right) \left((1+|x-z|)^{8-2d}+\ep^{2d-8}\right)\, dz =C + C\ep^{d-8}.
\end{multline*}

\smallskip

\emph{Step 3.} The estimate of the first term on the left side of~\eqref{e.expKzHyz}. 
Notice that according to Proposition~\ref{p.Green}, we have that 
\begin{multline}\label{e.betterKest}
|K(z)|\leq (T_{z}\X(\theta'_{z}(\omega, \omega')))^{d-1-\delta}\left( \ep^2 +  \left[ \psi_\ep \right]_{C^{1,1}_1(z,B_{1/\ep}(z))}\right. \\
+\left. \sup_{y\in B_\ell(z), \, y'\in\Rd} \left( h_\ep(y') - h_\ep(y) - \ep^{4}|y'-z|^2 \right)\right).
\end{multline}
We control each part individually. First, we claim that for every $\gamma\in (0,1)$, for every $p\in (1, \infty)$, there exists $C(\gamma, \la, \La, d, \ell, p)$ such that 
\begin{equation}\label{e.c11psi}
\E\left[\left(\left[ \psi_\ep \right]_{C^{1,1}_1(z,B_{1/\ep}(z))}\right)^{p}\right]^{\frac 1p} \leq C\ep^{2-\gamma}.
\end{equation}
Observe that $\psi_{\ep}$ is a solution of
\begin{equation} 
\label{e.psiep2}
 -\tr\left(A(x)D^2\psi_\ep \right) =  -\ep^2\phi_\ep + 4\ep^2\phi_{2\ep} \quad \mbox{in} \ \Rd. 
\end{equation}
Denote the right side by $f_\ep:= -\ep^2\phi_\ep + 4\ep^2\phi_{2\ep}=-\ep^{2}\psi_{\ep}+3\ep^{2}\phi_{2\ep}$.

\smallskip

We show that, for every $\gamma >0$ and $p\in [1,\infty)$, there exists $C(\gamma,p,d,\lambda,\Lambda,\ell)<\infty$ such that, for every $\ep\in (0,\frac12]$,
\begin{equation}
\label{e.fepbounds}
\E \left[ \left( \left\| f_\ep \right\|_{L^\infty(B_{1/\ep})} + \ep^{-\beta} \left[ f_\ep  \right]_{C^{0,\beta}(B_{1/\ep})}  \right)^p\, \right]^{\frac1p} \leq C\ep^{2-\gamma}.
\end{equation}
We first observe that~\eqref{e.randomerror},~\eqref{e.determinserr}, and~\eqref{e.offgrid} imply that 
\begin{equation*} \label{}
\E \left[ \exp\left( \left( \frac{1}{ \mathcal E(\ep)} \sup_{B_{\sqrt{d}} } \left| \ep^2\phi_\ep - \tr\left(\overline{A}M \right) \right| \right)^{\frac12+\delta}\right) \right] \leq C.
\end{equation*}
A union bound and stationarity then give, for every $\gamma>0$ and $p\in [1,\infty)$,
\begin{equation} 
\label{e.newoscboundphiep}
\E \left[  \left\|\ep^2\phi_\ep - \tr\left(\overline{A}M \right) \right\|_{L^\infty(B_{4/\ep})}^p \right]^{\frac1p} \leq C\ep^{-\gamma} \mathcal{E}(\ep).
\end{equation}
where~$C=C(\gamma,p,d,\lambda,\Lambda,\ell)<\infty$. The Krylov-Safonov estimate yields 
\begin{equation*} \label{}
\E \left[  \left( \ep^{-\beta} \left[ \phi_\ep \right]_{C^{0,\beta}(B_{2/\ep})} \right)^p\, \right]^{\frac1p}  \leq \ep^{-2} \mathcal{E}(\ep). 
\end{equation*}
The previous two displays and the triangle inequality yield the claim~\eqref{e.fepbounds}. Now~\eqref{e.c11psi} follows from Theorem~\ref{t.regularity},~\eqref{e.fepbounds},~\eqref{e.newoscboundphiep} and the H\"older inequality. 

\smallskip

We next show that for every $\gamma\in (0,1)$ and for every $p\in (1, \infty)$, 
\begin{equation} \label{e.grosstermpsi}
\E \left[\left(  \sup_{y\in B_\ell(z), \, y'\in\Rd} \left( h_\ep(y') - h_\ep(y) - \ep^{4}|y'-z|^2 \right)\right)^p \right]^{\frac 1p}
\leq C\ep^{2-\gamma}.
\end{equation}
By a union bound, we find that, for every $t>0$,
\begin{align*} \label{}
\lefteqn{
\P \left[  \sup_{y'\in\Rd} \left( h_\ep(y') -\ep^{4}|y'-z|^2 \right) > t   \right] 
} \qquad & \\
& \leq \sum_{n=0}^\infty \P \left[  \sup_{y' \in B_{2^n/\ep}(z)} h_\ep(y') >  c2^{2n}\ep^{2} + t \right] 
\\ &
\leq
\sum_{n=0}^{n(t)} \P \left[  \sup_{y' \in B_{2^n/\ep}(z)} h_\ep(y') >   t \right]
+ \sum_{n=n(t)+1}^{\infty} \P \left[  \sup_{y' \in B_{2^n/\ep}(z)} h_\ep(y') >  c2^{2n}\ep^{2} \right]
\end{align*}
where $n(t)$ is the largest positive integer satisfying $2^{2n(t)} \leq t\ep^{-2}$. By~\eqref{e.newoscboundphiep},
\begin{align*} \label{}
\sum_{n=0}^{n(t)} \P \left[  \sup_{y' \in B_{2^n/\ep}(z)} h_\ep(y') >   t \right] 
&
\leq (n(t)+1)  \P \left[  \sup_{y' \in B_{2^{n(t)}/\ep}(z)} h_\ep(y') >   t\right] 
\\ &
\leq C(n(t)+1) 2^{dn(t)}  \P \left[  \sup_{y' \in B_{1/\ep}} h_\ep(y') >   t\right] 
\\ & 
\leq C(n(t)+1) 2^{dn(t)} \left( \ep^{\gamma-2} t \right)^{-p}
\\ &
\leq C(\log t \ep^{-2})\left(t^{-1}\ep^{(2-\gamma)}\right)^{p-\frac d2}\ep^{\frac{\gamma d}{2}}
\end{align*}
and 
\begin{align*}
 \sum_{n=n(t)+1}^{\infty}\P \left[  \sup_{y' \in B_{2^n/\ep}(z)} h_\ep(y') >  c2^{2n}\ep^{2} \right]
&
\leq
 \sum_{n=n(t)+1}^{\infty} C2^{dn} \P \left[  \sup_{y' \in B_{1/\ep}} h_\ep(y') >  c2^{2n}\ep^{2} \right] 
 \\ &
\leq
C\ep^{(2-\gamma)p}  \sum_{n=n(t)+1}^{\infty} 2^{dn} 2^{-2np}\ep^{-2p}  
\\ &
\leq C\left( \ep^{(2-\gamma)} t^{-1} \right)^{p-\frac d2}.
\end{align*}
Combining the above, taking~$p$ sufficiently large, integrating over~$t$, and shrinking~$\gamma$ and redefining~$p$ yields \eqref{e.grosstermpsi}.

\smallskip

A combination of \eqref{e.bghlas}, \eqref{e.betterKest}, \eqref{e.c11psi},~ \eqref{e.grosstermpsi} and the H\"older inequality yields the desired bound for the first term on the left side of~\eqref{e.expKzHyz}.

\smallskip

\emph{Step 4.} The estimate of the second term on the left side of~\eqref{e.expKzHyz}. 
According to~\eqref{e.phiepprime} and Proposition~\ref{p.Green}, for $\X$ and $\delta(d, \la, \La)>0$ as in Proposition~\ref{p.Green},
\begin{align*} \label{}
G_\ep(x,y) \sup_{B_{\ell/2}(y)} (h_\ep- h_\ep') &\leq C\ep^2(T_{y}\X)^{d-1-\delta}\xi_{\ep}(y-x) (T_{z}\X)^{d+1-\delta}\xi_{2\ep}(y-z)\\
&\leq C\ep^{2} (T_{y}\X)^{d-1-\delta}(T_{z}\X)^{d+1-\delta}\xi_{\ep}(y-x) \xi_{\ep}(y-z).
\end{align*}
Therefore, 
\begin{equation*}
H(y,z)\leq C\ep^{2} (T_{y}\X)^{d-1-\delta}(T_{z}\X)^{d+1-\delta}.
\end{equation*}
Thus H\"older's inequality yields that, for every $p\in(1,\infty)$,
\begin{equation*}
\tilde{\E}\left[H(y,z)^{p}\right]
\leq C\ep^{2p}.
\end{equation*}
This completes the proof of~\eqref{e.expKzHyz}.
\end{proof}

We next control the expectation of $\hat{\phi}_{\ep}-\hat{\phi}_{2\ep} =  \psi_\ep(x)- \frac3{4\ep^2} \tr\left( \overline{A} M \right)$. 

\begin{lemma}
\label{l.expectationpsiep}
For every $p\in [1, \infty)$ and $\gamma>0$, there exists ~$C(p,\gamma,d,\lambda,\Lambda,\ell)<\infty$ such that, for every $\ep\in \left(0,\frac12\right]$ and $x\in\Rd$, 
\begin{equation}
\label{e.Epsiep}
\left| \E[\psi_{\ep}(x)] - \frac3{4\ep^2} \tr\left( \overline{A} M \right) \right|
\leq 
C\ep^{\left(\frac{d-4}2 \wedge 2 \right) -\gamma}.
\end{equation}
\end{lemma}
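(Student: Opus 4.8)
The plan is to run an argument parallel to the proof of Proposition~\ref{p.deterministicerror}, but now for the difference function $\psi_\ep = \phi_\ep - \phi_{2\ep}$ in place of $\phi_\ep$, using the fluctuation bound from Lemma~\ref{l.psiepsensitivity} as the analogue of Proposition~\ref{p.randomerror}. Recall $\psi_\ep$ solves $\ep^2\psi_\ep - \tr(A(x)D^2\psi_\ep) = 3\ep^2\phi_{2\ep}$ in $\Rd$, which (after subtracting the constant $\tfrac{3}{4\ep^2}\tr(\overline A M)$, since $\overline A$ may be taken to be the identity and $4\ep^2 \cdot \tfrac{3}{4\ep^2} = 3$ matches the massive term at scale $2\ep$) rewrites as an approximate-corrector-type equation for $\hat\psi_\ep := \psi_\ep - \tfrac3{4\ep^2}\tr(\overline A M)$ with a small right-hand side of size $O(\ep^2\|\phi_{2\ep}\|_\infty)$. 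By symmetry (replacing $M$ by $-M$) it suffices to prove the one-sided bound $\tfrac{3}{4\ep^2}\tr(\overline A M) - \E[\psi_\ep(x)] \geq -C\ep^{(\frac{d-4}{2}\wedge 2)-\gamma}$.

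The steps, in order, are as follows. First I would record a coarsened $C^{1,1}$ bound and $L^\infty$ oscillation bound for $\psi_\ep$ on $B_{\sqrt d}$ in $L^p$, exactly as in Step~1 of Proposition~\ref{p.deterministicerror}: combine \eqref{e.c11psi} (already proved in the proof of Lemma~\ref{l.psiepsensitivity}) with the affine-slope estimate to get $\E[\,|\!\osc_{B_{\sqrt d}}\psi_\ep|^p]^{1/p} \leq C\ep^{2-\gamma}$ and hence $\E[\,|\psi_\ep(x)-\E[\psi_\ep(0)]|^p]^{1/p} \le C\ep^{(\frac{d-4}{2}\wedge2)-\gamma}$ by Lemma~\ref{l.psiepsensitivity} together with this oscillation control. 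Second, set $f_\ep := (-\,\hat\psi_\ep(x) + \E[\hat\psi_\ep(0)])_+$ (the positive part of the fluctuation of $-\hat\psi_\ep$) and show $\E[(\fint_{B_R}|f_\ep|^d)^{1/d}] \le C\ep^{(\frac{d-4}{2}\wedge2)-\gamma}$ via Jensen and the $L^d$-in-expectation bound from step one. Third, build the comparison: let $\widehat{\hat\psi}_\ep$ solve $-\tr(A(M+D^2 v)) = \widehat f_\ep$ in $B_R$ with a large constant boundary datum ($\gtrsim \ep^{-2}$, since $\|\ep^2\phi_{2\ep}\|_\infty \le C$ forces $\|\hat\psi_\ep\|_\infty \le C\ep^{-2}$) chosen so $\hat\psi_\ep \le \widehat{\hat\psi}_\ep$ on $\partial B_R$, where $\widehat f_\ep := -\hat\psi_\ep - f_\ep = \min\{-\hat\psi_\ep, \E[-\hat\psi_\ep(0)]\}$; by ABP and step two, $\E[R^{-2}\sup_{B_R}(\hat\psi_\ep - \widehat{\hat\psi}_\ep)] \le C\ep^{(\frac{d-4}{2}\wedge2)-\gamma}$. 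Fourth, let $\widehat\phi$ solve the constant-coefficient problem $-\tr(\overline A(M+D^2\widehat\phi)) = \E[-\hat\psi_\ep(0)]$ in $B_R$ with the same constant boundary datum; this has the explicit quadratic solution, and since $\widehat f_\ep \le \E[-\hat\psi_\ep(0)]$ makes $\widehat{\hat\psi}_\ep$ a subsolution of the constant-coefficient equation with that right side, Proposition~\ref{p.subopt} gives $\E[R^{-2}\sup_{B_R}(\widehat{\hat\psi}_\ep - \widehat\phi)] \le CR^{-\alpha}$. Finally, chain the inequalities at $x=0$, read off the coefficient of the quadratic in the explicit formula for $\widehat\phi$, rearrange, send $R\to\infty$, and use stationarity plus step one to pass from $x=0$ to general $x$.

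The main obstacle I anticipate is bookkeeping the $\ep$-dependence of the boundary data and the massive term so that the final rearrangement actually yields $\err$-type gains of size $\ep^{(\frac{d-4}{2}\wedge2)-\gamma}$ rather than something weaker: unlike in Proposition~\ref{p.deterministicerror}, here the "correct" target rate $\ep^{(\frac{d-4}{2}\wedge2)}$ comes not from $\sum_z \xi_\ep(z)^2$ but from the sharper double-Green's-function sum \eqref{e.Greenie1}, $(\sum_z(\sum_y \xi_\ep(z-y)\xi_\ep(x-y))^2)^{1/2} \le C(1+\ep^{-1})^{4-d/2}$, which is exactly what Lemma~\ref{l.psiepsensitivity} delivers. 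One must be careful that the deterministic $\sup_{B_R}$ losses ($R^{-\alpha}$ from homogenization, $\ep^{-2}R^{-2}$ from the boundary datum) both vanish as $R\to\infty$, so that only the $C\err$-type term survives; this is the same mechanism as before and the $\ep^{-2}$ in the boundary datum is harmless once multiplied by $R^{-2}\to 0$. A secondary point is that Lemma~\ref{l.psiepsensitivity} is stated for the fluctuation $\psi_\ep - \E[\psi_\ep]$, which differs from $\hat\psi_\ep - \E[\hat\psi_\ep]$ only by the deterministic constant $\tfrac{3}{4\ep^2}\tr(\overline A M)$, so the fluctuation bounds transfer verbatim; it is precisely the expectation $\E[\hat\psi_\ep]$ — equivalently $\E[\psi_\ep] - \tfrac{3}{4\ep^2}\tr(\overline A M)$ — that this lemma pins down.
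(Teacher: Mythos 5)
Your proposal follows a genuinely different route from the paper, and it has a gap that a direct adaptation of Proposition~\ref{p.deterministicerror} cannot close. The first issue is structural: in your Step~3 you set up $\widehat{\widehat\psi}_\ep$ to solve $-\tr(A(M+D^2v)) = \widehat f_\ep$ with $\widehat f_\ep = \min\{-\widehat\psi_\ep,\, \E[-\widehat\psi_\ep(0)]\}$, by analogy with the corrector equation for $\phi_\ep$. But $\widehat\psi_\ep := \psi_\ep - \tfrac{3}{4\ep^2}\tr(\overline{A}M)$ does not solve an equation of that form: it solves
$\ep^2\widehat\psi_\ep - \tr(A\,D^2\widehat\psi_\ep) = \tfrac34\bigl((2\ep)^2\phi_{2\ep} - \tr(\overline{A}M)\bigr)$,
with no $M$ inside the trace, and with right-hand side equal to $-\ep^2\phi_\ep + 4\ep^2\phi_{2\ep}$ rather than $-\widehat\psi_\ep$. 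If you subtract the two equations to run ABP, you are left with a residual containing $(1-\ep^2)\widehat\psi_\ep - \tr(AM)$, which is $O(1)$, so the ABP bound $\E[R^{-2}\sup_{B_R}(\widehat\psi_\ep - \widehat{\widehat\psi}_\ep)] \le C\ep^{(\frac{d-4}{2}\wedge2)-\gamma}$ you claim does not hold; and if you nevertheless push through the explicit quadratic in Step~4, the spurious $M$ yields $\tr(\overline{A}M) - \E[\widehat\psi_\ep(0)] \ge -C\ep^{(\frac{d-4}{2}\wedge2)-\gamma}$ after rearranging, i.e.\ an answer off by an additive $\tr(\overline{A}M)$ from the statement of the lemma.

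The second, deeper obstruction is that even a corrected version of this comparison cannot reach the claimed rate. Writing the equation as $-\tr(A\,D^2\widehat\psi_\ep) = -\ep^2\phi_\ep + 4\ep^2\phi_{2\ep}$, the ABP step is governed by the fluctuations of this right-hand side, and since it contains $(2\ep)^2\phi_{2\ep}$, those fluctuations are only of size $\err(2\ep) \simeq \ep^2$ for $d>4$ (Proposition~\ref{p.randomerror}); they are \emph{not} at the improved scale $\ep^{(\frac d2\wedge4)-\gamma}$ that Lemma~\ref{l.psiepsensitivity} gives for $\ep^2\widehat\psi_\ep$. After the final rearrangement, which divides out the massive term $\ep^2$, one obtains only $|\E[\widehat\psi_\ep(0)]| = O(1)$, strictly weaker than the target. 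The paper circumvents both issues with a telescoping argument: it proves $\bigl|\E[\psi_\ep(0)] - 4\E[\psi_{2\ep}(0)]\bigr| \le C\ep^{\frac{d-4}{2}\wedge2}$, which makes $\{(2^{-n}\ep)^2\E[\psi_{2^{-n}\ep}(0)]\}_n$ a Cauchy sequence whose limit is identified as $\tfrac34\tr(\overline{A}M)$ by Proposition~\ref{p.deterministicerror}. The telescoped quantity is controlled by observing that $\eta_\ep := \psi_\ep - \tfrac14\psi_{2\ep}$ solves $-\tr(A\,D^2\eta_\ep) = -\ep^2(\psi_\ep - 4\psi_{2\ep})$ and applying a quadratic touching argument together with the $L^p$ oscillation bound for $\psi_\ep$ on $B_{1/\ep}$. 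The essential point is that the right side of the $\eta_\ep$ equation is exactly $\ep^2$ times the telescoped quantity being estimated, so the cancellation built into $\psi_\ep$ carries through to the deterministic error; the direct comparison you propose loses this cancellation because its right-hand side retains a full $\phi_{2\ep}$ contribution.
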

\begin{proof}
The main step in the argument is to show that 
\begin{equation} 
\label{e.psieptelescoping}
\left| \E\left[ \psi_\ep(0)  \right] - 4\E\left[ \psi_{2\ep}(0)  \right]\right| \leq C\ep^{\frac{d-4}{2}\wedge 2}.
\end{equation}
Let us assume~\eqref{e.psieptelescoping} for the moment and see how to obtain~\eqref{e.Epsiep} from it. First, it follows from~\eqref{e.psieptelescoping} that, for every $\ep$ and $m,n\in\N$ with $m\leq n$, 
\begin{equation*} \label{}
\left| (2^{-m}\ep)^2 \E\left[ \psi_{2^{-m}\ep}(0)  \right] - (2^{-n}\ep)^2 \E \left[ \psi_{2^{-n}\ep}(0) \right] \right| \leq C(2^{-m}\ep)^{\frac d2\wedge 4}.
\end{equation*}
Thus, the sequence $\left\{ (2^{-n}\ep)^2 \E \left[ \psi_{2^{-n}\ep} (0)\right] \right\}_{n\in\N}$ is Cauchy and there exists $L \in\R$ with 
\begin{equation*} \label{}
\left| (2^{-m}\ep)^2 \E\left[ \psi_{2^{-m}\ep}(0)  \right] -L \right| \leq C(2^{-m}\ep)^{\frac d2\wedge 4}.
\end{equation*}
Taking $m=0$ and dividing by $\ep^2$, this yields 
\begin{equation*} \label{}
\left|  \E\left[\psi_{\ep}(0)  \right] - \frac{L}{\ep^2} \right| \leq C\ep^{\frac {d-4}2\wedge 2}.
\end{equation*}
But in view of~\eqref{e.determinserr}, we have that $L=\frac34 \tr\left(\overline{A}M \right)$. This completes the proof of the lemma, subject to the verification of~\eqref{e.psieptelescoping}.

\smallskip

We denote $h_\ep(x) := \psi_\ep (x) - 4\psi_{2\ep}(x)$ so that we may rewrite~\eqref{e.psieptelescoping} as 
\begin{equation} \label{e.psieptelescoping2}
\E\left[ h_\ep(0) \right] \leq C\ep^{\frac{d-4}{2}\wedge 2}.
\end{equation}
We next introduce the function
\begin{equation} \label{e.etaep}
\eta_\ep(x):= \psi_\ep - \frac14 \psi_{2\ep}
\end{equation}
and observe that $\eta_\ep$ is a solution of
\begin{equation} \label{e.eqetaep}
-\tr\left( A(x)D^2\eta_\ep \right) = -\ep^2 h_\ep \quad \mbox{in} \ \Rd. 
\end{equation}
In the first step, we show that $\psi_\ep$ has small oscillations in balls of radius $\ep^{-1}$, and therefore so do $h_\ep$ and $\eta_\ep$. This will allow us to show in the second step that~\eqref{e.eqetaep} is in violation of the maximum principle unless the mean of $h_\ep$ is close to zero.

\smallskip

   \emph{Step 1.} The oscillation bound for~$\psi_\ep$. The claim is that, for every~$\gamma\in (0,1)$ and~$p\in [1, \infty)$, there exists $C(p, \gamma, \la, \La, d, \ell)$ such that 
\begin{equation}\label{e.psiosc}
\E\left[\sup_{x\in B_{1/\ep}} \left| \psi_{\ep}(x) - \E\left[ \psi_\ep(0) \right] \right|^{p}\right]^{\frac1p} \leq C \ep^{\frac{d-4}{2}\wedge 2}.
\end{equation}
By the equation~\eqref{e.psiep2} for~$\psi_\ep$, the Krylov-Safonov estimate~\eqref{e.classicKS} and the bounds~\eqref{e.fepbounds}, we have that (taking $\gamma$ sufficiently small), 
\begin{equation} 
\label{e.snaptogridpsi}
\E \left[ \left( \sup_{x\in B_{1/2\ep}} \left| \psi_\ep(x) - \psi_\ep([x]) \right| \right)^p \right]^{\frac1p} 
\leq C \ep^{\sigma} \ep^{2-\gamma}  \leq C\ep^2. 
\end{equation}
Here $[x]$ denotes the nearest point of $\Zd$ to $x\in\Rd$. 
By the fluctuation estimate~\eqref{e.flucpsi}, stationarity and a union bound, we have, for every $\gamma>0$ and $p\in(1,\infty)$,
\begin{equation*} \label{}
\E\left[\sup_{z\in B_{1/\ep} \cap \Zd} \left| \psi_{\ep}(z) - \E\left[ \psi_\ep(0) \right] \right|^{p}\right]^{\frac1p}
\leq C \ep^{\frac{d-4}{2}\wedge 2-\gamma}.
\end{equation*}
The previous two lines complete the proof of~\eqref{e.psiosc}. 

\smallskip

\emph{Step 2.} We prove something stronger than~\eqref{e.psieptelescoping2} by showing that
\begin{equation*} \label{}
\E \left[ \sup_{x\in B_{1/\ep} } \left| h_\ep (x) \right| \right] \leq C\ep^{\frac{d-4}{2}\wedge 2}.
\end{equation*}
By~\eqref{e.psiosc}, it suffices to show that 
\begin{equation*} \label{}
\E  \left[ \sup_{x\in B_{1/\ep} } h_\ep (x) \right] \geq - C\ep^{\frac{d-4}{2}\wedge 2}
\quad \mbox{and} \quad 
\E  \left[ \inf_{x\in B_{1/\ep} } h_\ep (x) \right] \leq C\ep^{\frac{d-4}{2}\wedge 2}.
\end{equation*}
We will give only the argument for the second inequality in the display above since the proof of the first one is similar. Define the random variable 
\begin{equation*} \label{}
\kappa: = \ep^2 \sup_{x\in B_{1/\ep}} \left| \psi_{\ep}(x) - \E\left[ \psi_\ep(0) \right] \right|.
\end{equation*}
Observe that the function
\begin{equation*} \label{}
x \mapsto \psi_\ep(x) - 8 \kappa |x|^2 \quad \mbox{has a local maximum at some point} \ x_0 \in B_{1/2\ep}. 
\end{equation*}
The equation~\eqref{e.psiep2} for $\psi_\ep$ implies that 
\begin{equation*} \label{}
 -\ep^2 h_\ep(x_0) \geq -16 \Lambda d \kappa \geq -C\kappa.
\end{equation*}
Thus
\begin{equation*} \label{}
\inf_{x\in B_{1/2\ep}} h_\ep(x) \leq h_\ep (x_0) \leq C\kappa = C \sup_{x\in B_{1/\ep}} \left| \psi_{\ep}(x) - \E\left[ \psi_\ep(0) \right] \right|. 
\end{equation*}
Taking expectations and applying~\eqref{e.psiosc} yields the claim. 
\end{proof}

We now complete the proof of Theorem~\ref{t.existcorrectors}. 

\begin{proof}{Proof of~Theorem~\ref{t.existcorrectors}}
According to~\eqref{e.flucpsi},~\eqref{e.psiosc},~\eqref{e.Epsiep} and a union bound, we have
\begin{equation} \label{}
\E \left[ \sup_{x\in B_{1/\ep}} \left| \psi_{\ep}(x) - \frac3{4\ep^2} \tr\left( \overline{A} M \right) \right| \right] 
\leq C \ep^{\left(\frac{d-4}2 \wedge 2 \right) -\gamma}.
\end{equation}
From this we deduce that 
\begin{equation} \label{}
\E \left[ \sup_{x\in B_{1/\ep}} \left| \hat{\phi}_{\ep}(x) - \hat{\phi}_{2\ep} (x) \right| \right] 
\leq C \ep^{\left(\frac{d-4}2 \wedge 2 \right) -\gamma}.
\end{equation}
We deduce the existence of a stationary function $\phi$ satisfying
\begin{equation} \label{}
\E \left[ \sup_{x\in B_{1/\ep}} \left| \hat{\phi}_{\ep}(x) - \phi (x) \right| \right] 
\leq C \ep^{\left(\frac{d-4}2 \wedge 2 \right) -\gamma}.
\end{equation}
Passing to the limit $\ep\to 0$ in~\eqref{e.hatphiep} and using the stability of solutions under uniform convergence, we obtain that $\phi$ is a solution of~\eqref{e.correctoreq}. The estimates~\eqref{e.correctorest} are immediate from~\eqref{e.correctorerror}. This completes the proof of the theorem. 
\end{proof}

\appendix

\section{Proof of the stretched exponential spectral gap inequality}
\label{s.spectralgap}

We give the proof of Proposition~\ref{p.concentration}, the \emph{Efron-Stein-type} inequality for stretched exponential moments. We first recall the classical Efron-Stein (often called the ``spectral gap") inequality. Given a probability space $(\Omega, \mathcal{F}, \mathbb{P})$ and a sequence $\mathcal{F}_{k}\subseteq \mathcal{F}$ of independent $\sigma$-algebras, let $X$ denote a random variable which is measurable with respect to $\mathcal{F}:=\sigma(\mathcal{F}_{1}, \ldots, \mathcal{F}_{n})$. The classical Efron-Stein inequality states that 
\begin{equation}\label{e.efronstein}
\var [X]=\E[\left|X-\E[X]\right|^{2}]\leq \E\left[\sum_{i=1}^{n}(X-X_{i}')^{2}\right]
\end{equation}
where 
\begin{align*}
\mathcal{F}_{i}'&:=\sigma(\mathcal{F}_{1}, \ldots, \mathcal{F}_{i-1}, \mathcal{F}_{i+1}, \ldots, \mathcal{F}_{n})\\
X'_{i}&:=\E\left[X \mid \mathcal{F}_{i}'\right].
\end{align*}
Therefore, we see that the variance is controlled by the $\ell^{2}$-norm of the vertical derivative 
\begin{equation*}
\V[X]:=\sum_{i=1}^{n}(X-X_{i}')^{2}.
\end{equation*}
If we have control of higher moments of $\V[X]$, then we can obtain estimates on the moments of $|X-\E[X]|$. 
Indeed, a result  of Boucheron, Lugosi, and Massart which can be found in~\cite{BLM,BLMbook} states that for every $p\geq 2$, 
\begin{equation}\label{e.pefronstein}
\E\left[\left|X-\E[X]\right|^{p}\right]\leq Cp^{p/2}\E\left[\V[X]^{p/2}\right],
\end{equation}
where we may take $C=1.271$. The same authors were also able to obtain similar estimates on exponential moments of $X-\E[X]$. Their result~\cite{BLM,BLMbook} states that 
\begin{equation}
\E[\exp(|X-\E[X]|)]\leq C\E[\exp(C\V[X])].
\end{equation}

\smallskip

We now give the proof of Proposition~\ref{p.concentration}, which is obtained by writing a power series formula for the stretched exponential and then using~\eqref{e.pefronstein} to estimate each term. We thank J.~C. Mourrat for pointing out this simple argument and allowing us to include it here.
 
\begin{proof}[{Proof of Proposition~\ref{p.concentration}}]
We show that for every $\beta\in (0,2)$, 
\begin{equation*}
\E\left[\exp\left(|X-\E[X]|^{\beta}\right)\right]\leq C\E\left[\exp\left((C\V[X])^{\frac{\beta}{2-\beta}}\right)\right]^{\frac{2-\beta}{\beta}}.
\end{equation*}
We may assume without loss of generality that $\E[X]=0$. Fix $\beta\in(0,2)$. We estimate the power series 
\begin{equation*}
\E\left[\exp\left(|X|^{\beta}\right)\right]=\sum_{n=0}^{\infty}\frac{1}{n!}\E\left[|X|^{\beta n}\right]
\end{equation*}
by splitting up the sum into two pieces and estimating each of them separately as follows. First, we consider the terms in which the power of $|X|$ is less than 2, and apply \eqref{e.efronstein} to get
\begin{multline*}
\sum_{n=0}^{\lfloor 2/\beta\rfloor}\frac{1}{n!}\E\left[|X|^{\beta n}\right]
\leq \sum_{n=0}^{\lfloor 2/\beta\rfloor}\frac{1}{n!}\E\left[|X|^{2}\right]^{n\beta/2}
\leq  \sum_{n=0}^{\lfloor 2/\beta\rfloor}\frac{1}{n!}\E\left[\V[X]\right]^{n\beta/2} \\
\leq \exp(\E\left[\V[X]\right]^{\beta/2}). 
\end{multline*}
For the other terms, we apply \eqref{e.pefronstein} (with $\kappa$ in place of $C$) and the discrete Holder inequality to obtain, for $\alpha>0$ to be selected below, 
\begin{align*}
\sum_{n=\lfloor 2/\beta\rfloor+1}^{\infty}\frac{1}{n!}\E\left[|X|^{\beta n}\right]&\leq \sum_{n=1}^{\infty}\frac{1}{n!}(\kappa n\beta)^{n\beta/2}\E\left[\left(\V[X]\right)^{n\beta/2}\right]\\
&\leq \left(\sum_{n=1}^{\infty}\frac{1}{n!}\left(\frac{\kappa n \beta}{\alpha}\right)^{n}\right)^{\beta/2}\left(\sum_{n=1}^{\infty}\frac{1}{n!}\E\left[(\alpha \V[X])^{n\beta/2}\right]^{\frac{2}{2-\beta}}\right)^{\frac{2-\beta}{2}}.
\end{align*}
We estimate the first factor on the right hand side by using the classical inequality (related to Stirling's approximation) which states that, for every~$n\in \mathbb{N}$, 
\begin{equation*}
n!\geq (2\pi)^{1/2}n^{n+1/2}\exp(-n).
\end{equation*}
This yields that for every $\alpha>e\kappa \beta$, 
\begin{equation*}
\sum_{n=1}^{\infty}\frac{1}{n!}\left(\frac{\kappa n \beta}{\alpha}\right)^{n}\leq \frac{1}{\sqrt{2\pi}}\sum_{n=1}^{\infty}n^{-1/2}\left(\frac{e\kappa\beta}{\alpha}\right)^{n}\leq \frac{\alpha}{\alpha-e\kappa\beta}. 
\end{equation*}
Combining this with our previous estimate, we obtain 
\begin{equation*}
\sum_{n=\lfloor 2/\beta\rfloor+1}^{\infty}\frac{1}{n!}\E\left[|X|^{\beta}n\right]\leq \left(\frac{\alpha}{\alpha-e\kappa\beta}\right)^{\beta/2}\left(\sum_{n=1}^{\infty}\frac{1}{n!}\E\left[(\alpha \V[X])^{n\beta/2}\right]^{\frac{2}{2-\beta}}\right)^{\frac{2-\beta}{2}}.
\end{equation*}
Observe that 
\begin{equation*}
\sum_{n=1}^{\infty}\frac{1}{n!}\E\left[(\alpha \V[X])^{n\beta/2}\right]^{\frac{2}{2-\beta}}\leq \sum_{n=1}^{\infty}\frac{1}{n!} \E\left[(\alpha \V[X])^{\frac{n\beta}{2-\beta}}\right]=\E\left[\exp((\alpha\V[X])^{\frac{\beta}{2-\beta}}\right],
\end{equation*}
and this implies that 
\begin{equation*}
\sum_{n=\lfloor 2/\beta\rfloor+1}^{\infty}\frac{1}{n!}\E\left[|X|^{\beta}n\right]\leq \left(\frac{\alpha}{\alpha-e\kappa\beta}\right)^{\beta/2}\left(\E\left[\exp((\alpha\V[X])^{\frac{\beta}{2-\beta}}\right]\right)^{\frac{2-\beta}{2}}.
\end{equation*}
Combining all of the previous estimates yields that 
\begin{equation*}
\E\left[\exp\left(|X|^{\beta}\right)\right]\leq \exp\left(\E[\V[X]]^{\beta/2}\right)+C\E\left[\exp\left((C\V[X])^{\frac{\beta}{2-\beta}}\right)\right]^{\frac{2-\beta}{2}}.
\end{equation*}
Since $\beta\leq 2$ and we can take $\alpha=20$, the constant $C$ is universal. This completes the proof. 
\end{proof}

\subsection*{Acknowledgements}
The second author was partially supported by NSF Grant DMS-1147523.

\bibliographystyle{plain}
\bibliography{optimalrates}
\end{document}